\documentclass[12pt,a4paper]{article}

\usepackage[utf8]{inputenc}
\usepackage[T1]{fontenc}
\usepackage{lmodern}
\usepackage[english]{babel}
\usepackage{amsmath,amssymb,amsthm,mathtools}
\usepackage{geometry}
\usepackage{enumitem}
\usepackage{graphicx}
\usepackage{tikz}
\usepackage{float}
\usetikzlibrary{calc}
\usepackage[hidelinks]{hyperref}
\hypersetup{
  pdftitle={Minimal Equiangular Hyperbolic Polyhedra in the Tetrahedral Range},
  pdfauthor={Andrey Egorov},
  pdfsubject={Hyperbolic geometry and convex polyhedra},
  pdfkeywords={hyperbolic polyhedra, equiangular polyhedra, volume, Andreev's theorem, minimal volume}
}

\newtheorem{theorem}{Theorem}[section]
\newtheorem{lemma}[theorem]{Lemma}
\newtheorem{proposition}[theorem]{Proposition}

\theoremstyle{definition}

\DeclareMathOperator{\Vol}{vol}
\DeclareMathOperator{\Area}{area}
\DeclareMathOperator{\arcosh}{arcosh}

\newcommand{\HH}{\mathbb H}
\newcommand{\RR}{\mathbb R}

\newcommand{\voct}{v_{\mathrm{oct}}}
\newcommand{\vtet}{v_{\mathrm{tet}}}
\newcommand{\Lob}{\Lambda}
\newcommand{\Gcat}{G}

\begin{document}

\title{Minimal Equiangular Hyperbolic Polyhedra\\
in the Tetrahedral Range}
\author{Andrey Egorov\\
Sobolev Institute of Mathematics, 4 Academician Koptyug Avenue,\\
630090 Novosibirsk, Russia\\
Corresponding author: \texttt{a.egorov2@g.nsu.ru}\\
ORCID: \href{https://orcid.org/0009-0007-8795-8148}{0009-0007-8795-8148}}
\date{}
\maketitle

\begin{abstract}
We study the minimum-volume problem for finite-volume convex hyperbolic polyhedra whose dihedral angles are all equal to a fixed number \(\alpha\). In the non-obtuse case one necessarily has
\[
\frac{\pi}{3}\le \alpha\le \frac{\pi}{2}.
\]
We prove that throughout the full range in which the regular hyperbolic tetrahedron with dihedral angle \(\alpha\) exists,
\[
\frac{\pi}{3}\le \alpha<\arccos\frac13,
\]
it is the unique minimum-volume equiangular hyperbolic polyhedron with prescribed angle \(\alpha\). The left endpoint is the known ideal case, while at the upper endpoint the tetrahedron shrinks to a point, with a Euclidean regular tetrahedron as its rescaled limit. The proof combines Andreev's theorem and the Schl\"afli formula with Atkinson's decomposition into atoroidal and prismatic parts, explicit volume estimates for ordinary prisms and complete orthoschemes, and a direct equiangular version of Inoue's edge surgery.
\end{abstract}

\noindent\textbf{Keywords.} Hyperbolic polyhedra; equiangular polyhedra; volume; Andreev's theorem; minimal volume.

\medskip
\noindent\textbf{2020 Mathematics Subject Classification.} 51M10, 52B10, 57M50.

\section{Introduction}

Andreev's classical theorem gives a complete description of finite-volume convex hyperbolic polyhedra with dihedral angles not exceeding \(\pi/2\), in terms of their combinatorics and linear inequalities on the angles \cite{AndreevCompact,AndreevFinite,RHD}. In particular, once the combinatorial type of a polyhedron and a collection of non-obtuse dihedral angles are fixed, existence and uniqueness of a hyperbolic realization are reduced to checking the conditions of Andreev's theorem.

In this paper we consider a special but natural case: all dihedral angles of the polyhedron are equal to one number \(\alpha\). We call such a polyhedron \emph{equiangular}. It follows from Andreev's theorem that, for non-obtuse equiangular polyhedra, possible angles lie in the interval
\[
        \frac{\pi}{3}\le \alpha\le \frac{\pi}{2}.
\]
For \(\alpha=\pi/3\), all vertices are ideal and one obtains ideal \(\pi/3\)-equiangular Coxeter-type polyhedra. For \(\alpha=\pi/2\), one obtains right-angled polyhedra. The class of equiangular polyhedra is rich throughout this interval. In particular, all fullerenes are realized as equiangular hyperbolic polyhedra for every angle \(\alpha\in[\pi/3,\pi/2]\) \cite{EgorovFullerene}. Moreover, the regular tetrahedron exists for \(\pi/3\le\alpha<\arccos(1/3)\), and equiangular quadrilateral prisms, that is, equiangular parallelepipeds, exist for all \(\pi/3\le\alpha<\pi/2\); for \(\alpha=\pi/3\) one obtains an ideal equiangular parallelepiped, while as \(\alpha\to\pi/2\) a Euclidean degeneration occurs.

The endpoint cases of the minimal-volume problem are already known. For \(\alpha=\pi/3\), Atkinson's volume estimate for \(\pi/3\)-equiangular hyperbolic polyhedra implies that the minimum is attained by the ideal regular tetrahedron \cite[Theorem~2.6]{AtkinsonEquiangular}. Nonaka and Yoshida proved that the next \(\pi/3\)-equiangular polyhedron by volume is the regular ideal cube, that is, the ideal regular \(4\)-prism \cite[Theorem~3]{Nonaka2024}. For \(\alpha=\pi/2\), the minimum volume among all finite-volume right-angled hyperbolic polyhedra is attained by the triangular bipyramid \(P(3,2)\) \cite{EgorovP32}. This minimum volume is Catalan's constant. Thus a natural question is what happens between the two endpoint values of the angle.

Put
\[
        \alpha_0=\arccos\frac13\approx1.230959417 .
\]
The regular hyperbolic tetrahedron with dihedral angle \(\alpha\) exists precisely for
\[
        \frac{\pi}{3}\le\alpha<\alpha_0 .
\]
We refer to this interval as the \emph{tetrahedral range}. The left endpoint corresponds to the ideal regular tetrahedron, while as \(\alpha\to\alpha_0\) the tetrahedron shrinks to a point and becomes Euclidean after rescaling. Throughout this range the tetrahedron has the smallest possible number of faces, and its volume decreases from \(\vtet\) to zero.

Throughout the paper, \(\vtet\) and \(\voct\) denote the volumes of the
regular ideal hyperbolic tetrahedron and octahedron, respectively.

Prismatic \(4\)-circuits require a decomposition before right-angled volume estimates can be applied. We use Atkinson's decomposition to isolate atoroidal components and control their right-angled limits. The remaining graph-type case is reduced to an ordinary prism by a direct equiangular version of Inoue's edge surgery \cite[Section~8]{Inoue2008}. At every step only the angle of the lateral seam being removed is increased, directly from \(\alpha\) to \(\pi\), while all surviving angles remain equal to \(\alpha\).

The polar region affected by this deformation is the union of two spherical
triangles with side lengths \(L,L,s\), where \(L=\pi-\alpha\) and \(s\)
decreases from \(L\) to \(0\). Unlike the right-angled case, this region is
not a spherical bigon. We construct a natural \(1\)-Lipschitz collapse onto
the polar of the polyhedron obtained after deleting the seam. We retain the
overall scheme of Inoue's proof, but the two-triangle patch requires new
crossing estimates, a separate treatment of one-sided carriers, and a
modified final comparison of polar cellulations. The Schl\"afli formula then
shows that every such surgery strictly decreases volume.

The main result is the following.

\begin{theorem}[Main theorem]\label{thm:main}
Let
\[
        \frac{\pi}{3}\le\alpha<\arccos\frac13 .
\]
Then every finite-volume convex hyperbolic polyhedron \(P\) whose dihedral angles are all equal to \(\alpha\) satisfies
\[
        \Vol(P)\ge \Vol(T_\alpha),
\]
where \(T_\alpha\) is the regular tetrahedron with dihedral angle \(\alpha\). Equality holds only if \(P\) is isometric to \(T_\alpha\).
\end{theorem}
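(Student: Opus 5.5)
The plan is to split equiangular polyhedra by combinatorics and show that every combinatorial type other than the tetrahedron has volume strictly above \(\Vol(T_\alpha)\) for all \(\alpha\) in the tetrahedral range. Since \(\Vol(T_\alpha)\le\vtet\), with equality only at \(\alpha=\pi/3\), it suffices to bound every non-tetrahedral \(P\) below by a quantity that is at least \(\vtet\), or at least \(\Vol(T_\alpha)\). First, Andreev's theorem with all angles equal to \(\alpha>\pi/3\) forces trivalent vertices, since \(4\alpha>2\pi\) fails only for \(\alpha\le\pi/2\), and ideal \(4\)-valent vertices are allowed only at \(\alpha=\pi/2\). Prismatic \(3\)-circuits are forbidden because \(3\alpha\ge\pi\). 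Prismatic \(4\)-circuits are allowed only if \(4\alpha<2\pi\), which holds throughout the range. Hence every non-tetrahedral \(P\) with \(\alpha>\pi/3\) is a compact simple polyhedron without prismatic \(3\)-circuits, possibly with prismatic \(4\)-circuits. The case \(\alpha=\pi/3\) is Atkinson's theorem, so from now on we take \(\alpha>\pi/3\).

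The main monotonicity tool is the Schl\"afli formula. If we deform \(P\) within its combinatorial type by raising the common angle \(\alpha\) towards \(\pi/2\), then \(d\Vol=-\tfrac12\sum_e \ell_e\,d\alpha<0\). This gives \(\Vol(P_\alpha)>\Vol(P_{\pi/2})\) whenever the right-angled realization \(P_{\pi/2}\) exists. The deformation stays inside Andreev's conditions for every combinatorics without prismatic \(4\)-circuits, so the path exists. For the atoroidal case (no prismatic \(4\)-circuits) we then get \(\Vol(P)>\Vol(P_{\pi/2})\). Atkinson's lower bounds for compact right-angled polyhedra, or the fact that \(P_{\pi/2}\) is compact and hence far from the minimal ideal \(P(3,2)\), give \(\Vol(P_{\pi/2})\ge\Vol(\text{dodecahedron})\approx4.3>\vtet\). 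This settles that case.

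For polyhedra with prismatic \(4\)-circuits, the right-angled limit degenerates, so we follow the abstract's scheme. We cut along Atkinson's decomposition into atoroidal pieces and prismatic (graph-type) pieces. If some atoroidal component is not an ordinary prism, we control its right-angled limit as above and use the decomposition's volume inequality to get \(\Vol(P)>\vtet\). Otherwise \(P\) is graph-type. We then apply the equiangular Inoue surgery, which repeatedly deletes a lateral seam edge by opening its angle from \(\alpha\) to \(\pi\) while all other angles stay \(\alpha\). The \(1\)-Lipschitz collapse of the two-triangle polar patch guarantees that the path stays in the realizable class. Schl\"afli with increasing angle gives strict volume decrease, so the process ends at an ordinary equiangular \(n\)-prism \(\Pi_n(\alpha)\), \(n\ge4\), with \(\Vol(P)>\Vol(\Pi_n(\alpha))\). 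Finally, we decompose \(\Pi_n(\alpha)\) by symmetry into \(4n\) complete orthoschemes and estimate their volumes explicitly with Lobachevsky-function formulas. This should give \(\Vol(\Pi_n(\alpha))>\Vol(T_\alpha)\) for \(n\ge4\), with \(n=4\) the tightest case, where we compare directly with \(\Vol(T_\alpha)\), itself a sum of orthoscheme volumes. We also must treat the triangular prism separately: it has a prismatic \(3\)-circuit only in the dual sense and is excluded by Andreev.

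The main obstacle is the surgery step. We must check that each intermediate angle assignment satisfies Andreev's conditions, or equivalently that the polar stays a valid spherical cellulation with the right length inequalities, all the way until the seam angle reaches \(\pi\). This is where the crossing estimates for the non-bigon patch are needed, and we would first try to reduce them to the triangle inequality on \(L,L,s\). The second delicate point is the explicit prism-versus-tetrahedron inequality near \(\alpha_0\), where both volumes tend to zero. It may require a ratio argument comparing the leading asymptotics, both of order \((\alpha_0-\alpha)^{3/2}\), rather than crude constants.
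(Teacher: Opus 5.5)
Your overall architecture matches the paper's: Atkinson's decomposition, right-angled limits for atoroidal pieces, equiangular Inoue surgery for graph type, and ordinary prisms at the end. Your treatment of polyhedra with no prismatic $4$-circuits is also fine, since the deformation $\alpha\to\pi/2$ stays compact and ends at a compact right-angled polyhedron of volume at least that of the dodecahedron.

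\textbf{The gap.} It lies in the mixed case, where $P$ has prismatic $4$-circuits but Atkinson's decomposition still has an atoroidal component $Q$. You ``control its right-angled limit as above'', but ``above'' relied on compactness, and compactness fails here. In the rehyperbolization $R$ the new quadrilateral faces already carry right angles on their boundary. As the remaining angles rise to $\pi/2$, the prismatic $4$-circuits around these faces reach angle sum $2\pi$, and each such face collapses to a four-valent ideal vertex. So $R^\perp$ is a finite-volume, in general non-compact, right-angled polyhedron, and compact bounds (dodecahedron, Atkinson's compact estimate) do not apply to it. Nothing prevents $R^\perp$ from being $P(3,2)$, whose volume is Catalan's constant $\Gcat\approx0.916<\vtet$. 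Since $\Vol(T_\alpha)\to\vtet$ as $\alpha\to\pi/3$, the chain $\Vol(P)\ge\Vol(Q)\ge\Vol(R)\ge\Vol(R^\perp)$ cannot yield $\Vol(P)>\Vol(T_\alpha)$ on the whole range. Two ingredients are missing:
\begin{itemize}
\item A gap statement: every finite-volume right-angled polyhedron other than $P(3,2)$ has volume $>\vtet$. The paper gets this from Atkinson's bound $\frac{4N_\infty+N_F-8}{32}\voct$, which settles $4N_\infty+N_F-8\ge10$, together with the small cases $P(2,8)$ and $P(3,4)$ from \cite{EgorovP32}.
\item A separate argument when the limit is $P(3,2)$. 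The paper stops the deformation early at $\widehat B_\alpha$, where only the edges of the three opened quadrilaterals are raised to $\pi/2$. It cuts $\widehat B_\alpha$ by its order-$12$ symmetry into complete orthoschemes $\mathcal O(\pi/3,\alpha/2,\alpha/2)$ and certifies $\Vol(\widehat B_{\alpha_0})\approx3.306>\vtet$ with Kellerhals' formula, using monotonicity in $\alpha$.
\end{itemize}

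\textbf{Smaller points.}
\begin{itemize}
\item Your final comparison rests on a false premise. The prism volume does not tend to zero as $\alpha\to\alpha_0$: equiangular $n$-prisms exist for all $\alpha<\pi/2$ and degenerate only at $\pi/2$, so $\Pi_n(\alpha_0)$ is a genuine compact prism. No asymptotic ratio argument is needed. The paper uses Schl\"afli monotonicity in $\alpha$ and a Klein-model cylinder bound $\Vol(\Pi_n(\alpha_0))\ge\pi/3>\vtet$. Your $4n$-orthoscheme computation would work but is unnecessary.
\item For the surgery, the criterion is not ``Andreev's conditions''. Andreev does not apply once the seam angle exceeds $\pi/2$, so Hodgson--Rivin is the only tool there.
\item Condition (RH3) does not reduce to the triangle inequality on $L,L,s$. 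The paper needs all of the following:
\begin{itemize}
\item the seam lies in no prismatic $4$-circuit, which makes the stars of $X$ and $Y$ disjoint;
\item every geodesic crossing $AC,AB,AD$ of the two-triangle patch has length $>\pi$;
\item strict star-passage estimates in $P_1^\circ$, transported by the $1$-Lipschitz collapse;
\item a final recovery of the polar cellulation.
\end{itemize}
\item For the equality clause you also need that an equiangular tetrahedron is isometric to $T_\alpha$, which follows from the Gram matrix.
\end{itemize}
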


For \(\pi/3<\alpha<\arccos(1/3)\), the volume of \(T_\alpha\) is given by
\begin{equation}\label{eq:tetra-volume-intro}
        \Vol(T_\alpha)=
        3\int_\alpha^{\arccos(1/3)}
        \arcosh\left(
        \frac{\cos t}{1-2\cos t}
        \right)\,dt .
\end{equation}
For \(\alpha=\pi/3\), the right-hand side is understood as an improper integral and equals the volume of the ideal regular tetrahedron.

The paper is organized as follows. Section~2 collects the geometric tools and
Atkinson's decomposition. Sections~3 and~4 treat the atoroidal case and its
exceptional right-angled limit. Section~5 develops the polar-metric surgery
and reduces graph-type polyhedra to ordinary prisms. Section~6 proves the main
theorem.

\section{Preliminaries}

\subsection{Non-obtuse hyperbolic polyhedra}

By a hyperbolic polyhedron we mean a convex finite-volume polyhedron in the three-dimensional hyperbolic space \(\HH^3\). A vertex is called \emph{finite} if it lies in \(\HH^3\), and \emph{ideal} if it lies on the sphere at infinity \(\partial\HH^3\). A polyhedron is called \emph{non-obtuse} if all its dihedral angles are at most \(\pi/2\). We will often use the standard term \emph{right-angled} for polyhedra all of whose dihedral angles are equal to \(\pi/2\).

Let \(P\) be an abstract polyhedron, that is, a cell decomposition of the sphere realized by a convex Euclidean polyhedron. Let \(P^*\) be the dual graph. A simple cycle of length \(k\) in \(P^*\) is called a \emph{prismatic \(k\)-circuit} if no two of its edges lie in the same face of the dual graph. Equivalently, such a circuit intersects \(k\) pairwise disjoint edges of the original polyhedron. These circuits play a key role in Andreev's conditions.

We use the following form of Andreev's theorem \cite{AndreevCompact,AndreevFinite}; see also the corrected exposition in \cite{RHD}.

\begin{theorem}[{\cite[Theorem~3]{Atkinson2011}}]\label{thm:andreev}
Let \((P,\Theta)\) be a non-obtuse labelled abstract polyhedron with more than four vertices. Here \(\Theta(e)\in(0,\pi/2]\) is the prescribed dihedral angle at an edge \(e\). Then \((P,\Theta)\) is realized by a finite-volume convex hyperbolic polyhedron if and only if the following conditions hold.
\begin{enumerate}[label=\textup{(\arabic*)},itemsep=0.15em,topsep=0.3em]
    \item Exactly three or exactly four edges meet at each vertex.
    \item If edges \(e_i,e_j,e_k\) meet at a vertex, then
    \[
        \Theta(e_i)+\Theta(e_j)+\Theta(e_k)\ge \pi .
    \]
    \item If edges \(e_i,e_j,e_k,e_l\) meet at a vertex, then
    \[
        \Theta(e_i)+\Theta(e_j)+\Theta(e_k)+\Theta(e_l)=2\pi .
    \]
    \item If edges \(e_i,e_j,e_k\) form a prismatic \(3\)-circuit, then
    \[
        \Theta(e_i)+\Theta(e_j)+\Theta(e_k)<\pi .
    \]
    \item If edges \(e_i,e_j,e_k,e_l\) form a prismatic \(4\)-circuit, then
    \[
        \Theta(e_i)+\Theta(e_j)+\Theta(e_k)+\Theta(e_l)<2\pi .
    \]
    \item If \(P\) is a triangular prism and \(e_i,e_j,e_k,e_p,e_q,e_r\) are the six edges contained in the two triangular faces, then
    \[
        \Theta(e_i)+\Theta(e_j)+\Theta(e_k)+\Theta(e_p)+\Theta(e_q)+\Theta(e_r)<3\pi .
    \]
    \item If faces \(F_i\) and \(F_j\) meet along an edge \(e_{ij}\), faces \(F_j\) and \(F_k\) meet along an edge \(e_{jk}\), and \(F_i\) and \(F_k\) have exactly one common ideal vertex distinct from the endpoints of \(e_{ij}\) and \(e_{jk}\), then
    \[
        \Theta(e_{ij})+\Theta(e_{jk})<\pi .
    \]
\end{enumerate}
The realization, if it exists, is unique up to isometry of \(\HH^3\). The ideal vertices of the realization are precisely the four-valent vertices and the trivalent vertices for which equality holds in condition \textup{(2)}.
\end{theorem}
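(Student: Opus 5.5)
Since this is Andreev's theorem in the finite-volume form of \cite[Theorem~3]{Atkinson2011}, I would not reprove it from scratch. I would follow the continuity method of Andreev as corrected by Roeder, Hubbard and Dunbar \cite{RHD}, and then pass to ideal vertices by a limiting argument. The proof splits into necessity, uniqueness, and existence, handled in that order.

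\emph{Necessity.} Conditions (1)--(3) are local. The link of a finite vertex is a spherical polygon whose angles are the dihedral angles. Non-obtuse spherical polygons are triangles, which forces angle sum \(>\pi\). The link of an ideal vertex is a Euclidean polygon with non-obtuse angles, hence a triangle with angle sum \(\pi\) or a rectangle with angle sum \(2\pi\). This gives (1)--(3) and also the final sentence describing which vertices are ideal.

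For a prismatic \(3\)- or \(4\)-circuit, the non-obtuse hypothesis makes the faces of the circuit pairwise intersect only as prescribed. I would then apply Gauss--Bonnet to the hyperbolic polygon cut out by the circuit faces, or equivalently use the Gram-matrix argument on their normals. If the angle sum reached \(\pi\) (respectively \(2\pi\)), the planes would meet in a finite or ideal point, or be asymptotically parallel, contradicting the combinatorics. This gives (4) and (5). Conditions (6) and (7) come from the same argument applied to the degenerate configurations in which a prism or a pair of faces would collapse onto an ideal point.

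\emph{Uniqueness.} I would use the Schl\"afli formula in Hodgson's form. Suppose two realizations have the same angles. Deform along the straight segment of angle-space between them and use the concavity of volume as a function of dihedral angles. Alternatively, apply the Cauchy-type rigidity argument on polar polyhedra (Hodgson--Rivin), which shows that the angle map is injective on the space of realizations of a fixed combinatorial type.

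\emph{Existence, compact case.} Let \(\mathcal P\) be the space of compact realizations of the abstract polyhedron with non-obtuse angles, and let \(\mathcal A\) be the convex polytope of angle vectors satisfying (2), (4), (5), (6) with strict inequality in (2). The angle map \(\Phi:\mathcal P\to\mathcal A\) is a local homeomorphism by infinitesimal rigidity. It is injective by uniqueness. It is proper: if \(\Phi(P_n)\) converges in \(\mathcal A\) but \(P_n\) leaves every compact set, then some collection of faces degenerates. The prismatic-circuit inequalities exclude each possible degeneration: collapse of an edge, faces becoming tangent, or a vertex running to infinity.

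A proper local homeomorphism into a connected target is a covering. Hence \(\Phi\) is onto once \(\mathcal P\) is nonempty. The main obstacle is exactly this nonemptiness together with connectedness of \(\mathcal P\), which is where the original argument had a gap. Following \cite{RHD}, I would obtain a realization by Whitehead moves. Every abstract polyhedron with more than four vertices other than a prism can be connected to a prism by edge contractions and expansions that stay within the class satisfying the circuit conditions. Each move is then realized geometrically by letting one angle tend to \(0\) or to \(\pi/2\).

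\emph{Finite-volume case.} Approximate the prescribed angles by angle vectors satisfying the strict inequality in (2) at every trivalent vertex, truncating four-valent vertices. Realize these approximations compactly and take a limit. Condition (7) and the prismatic conditions guarantee that the only degenerations in the limit are vertices becoming ideal exactly where (2) is an equality or the vertex is four-valent. Uniqueness in the limit again follows from the Schl\"afli/rigidity argument.
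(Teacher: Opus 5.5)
The paper gives no proof of this statement: it quotes it as \cite[Theorem~3]{Atkinson2011}, attributes it to Andreev \cite{AndreevCompact,AndreevFinite}, and points to \cite{RHD} for the corrected exposition, so deferring to that literature and outlining the Roeder--Hubbard--Dunbar continuity method, as you do, is the same approach. One step of your outline fails as written: two realizations with the same angles sit at a single point of angle space, so the ``deform along the segment and use concavity'' argument has nothing to act on; use only your rigidity route, which works in the compact case because non-obtuse compact polyhedra are trivalent, so their polar metrics are determined by the angles and Hodgson--Rivin uniqueness \cite{RivinHodgson1993} applies, and cite \cite{AndreevFinite} for uniqueness when ideal vertices occur.
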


In the equiangular case \(\Theta(e)=\alpha\), for
\[
        \frac{\pi}{3}<\alpha<\frac{\pi}{2}
\]
Andreev's theorem immediately implies that all vertices are trivalent and finite. Indeed, a four-valent vertex would require \(4\alpha=2\pi\), and equality at a trivalent vertex would require \(3\alpha=\pi\). Moreover, prismatic \(3\)-circuits are impossible, since they would require \(3\alpha<\pi\). Prismatic \(4\)-circuits are allowed, because \(4\alpha<2\pi\).

\subsection{The Schl\"afli formula}

\begin{theorem}[{\cite[Theorem~9]{Atkinson2011}}]\label{thm:schlafli}
Let \(P_t\) be a smooth deformation of a finite-volume hyperbolic polyhedron on an interval where its combinatorics and its set of ideal vertices are fixed. Then
\begin{equation}\label{eq:schlafli}
        d\Vol(P_t)=-\frac12\sum_e l_e\,d\theta_e,
\end{equation}
where the sum is taken over all edges, \(\theta_e\) is the dihedral angle at \(e\), and \(l_e\) is the length of \(e\). If an edge is incident to an ideal vertex, its length is understood as the signed truncated length with respect to a chosen system of horospheres; the whole right-hand side is independent of this choice.
\end{theorem}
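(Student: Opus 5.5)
The plan is to prove the compact case first and then pass to ideal vertices by horospherical truncation. In the compact case the idea is to triangulate, since for a geodesic simplex the identity can be checked by a local computation.

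\emph{Step 1: compact polyhedra.} Let \(P_t\) be a smooth family of compact polyhedra with fixed combinatorics. Pick a point \(o_t\) depending smoothly on \(t\) in the interior of \(P_t\) and a fixed triangulation of each face. Coning from \(o_t\) gives a smooth family of triangulations of \(P_t\) into geodesic simplices with fixed combinatorics. The volume is additive over the simplices. If the formula holds for each simplex, then summing shows that the new interior edges contribute \(l_e\,d(\sum\theta)\), and this vanishes:
\begin{itemize}[itemsep=0.1em]
\item around an edge in the interior of \(P_t\), the dihedral angles sum to \(2\pi\);
\item around an edge lying inside a face, they sum to \(\pi\);
\item along an original edge of \(P_t\), the simplex angles sum to \(\theta_e\), so only the terms \(l_e\,d\theta_e\) survive.
\end{itemize}
It therefore suffices to treat one simplex \(\Delta\).

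A simplex is determined by its four bounding planes. Write a deformation as a composition of motions in which one face plane \(H\) moves, each motion infinitesimally a rotation of \(H\) about a geodesic line. First-order variation of the volume then gives
\[
d\Vol=\int_{F} \delta(x)\,dA,
\]
where \(F\) is the moving face and \(\delta\) is the normal displacement of \(H\) at \(x\). For a rotation about a line the normal displacement is proportional to \(\sinh\) of the distance to the axis. Integrating \(\sinh\) of the distance to a line over a hyperbolic triangle, by Stokes' theorem, yields a sum over the sides of \(F\) of side length times the rate of change of the dihedral angle along that side. This produces exactly \(-\tfrac12\sum l_e\,d\theta_e\), with the sign fixed by the fact that tilting the plane outward increases the volume and decreases the adjacent dihedral angles.

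\emph{Step 2: ideal vertices.} Now let \(P_t\) have ideal vertices with their set fixed. Choose horospheres \(B_v(t)\) centered at the ideal vertices, small enough to be disjoint and to meet only the faces and edges incident to \(v\). Let \(P_t^{\mathrm{tr}}\) be the truncated polyhedron. Apply the first-variation argument of Step 1 to \(P_t^{\mathrm{tr}}\), allowing one boundary face to be a horospherical polygon. That face contributes the integral of the normal velocity over the horospherical cusp section, which is \(O(\text{area})\) and tends to zero as the horosphere shrinks. The remaining boundary terms give \(-\tfrac12\sum l_e\,d\theta_e\) with truncated lengths. Letting the horospheres shrink (the volume of the cusp region is \(O(\text{area})\) and its derivative is controlled in the same way) yields the formula for \(P_t\).

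\emph{Step 3: independence of the horospheres.} Replacing \(B_v\) by another horosphere at \(v\) changes every truncated length at edges through \(v\) by the same constant \(c_v\). The cusp section at \(v\) is a Euclidean polygon with \(k\) sides, whose angles are the dihedral angles \(\theta_e\) at the incident edges. Hence \(\sum_{e\ni v}\theta_e=(k-2)\pi\) is constant, so the right-hand side changes by \(-\tfrac12 c_v\,d\sum_{e\ni v}\theta_e=0\). The same remark shows the right-hand side is well defined.

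The main obstacle is the simplex computation in Step 1, namely getting the exact constant \(\tfrac12\) and the sign in the rotation-of-a-plane integral. As a fallback I would use Milnor's argument: from the Gram matrix, show that \(\partial l_i/\partial\theta_j\) is symmetric, so \(\sum l_i\,d\theta_i\) is closed. Then normalize the constant by comparison with small, nearly Euclidean simplices, using the Euclidean Schläfli identity \(\sum l_e\,d\theta_e=0\) at leading order.
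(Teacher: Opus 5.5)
The paper does not prove this statement. It quotes it from Atkinson's Theorem~9 and uses it as a black box, and every use of it in the paper concerns compact polyhedra. So your proposal has to stand on its own.

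\textbf{Step~1 is sound.} It is the standard first-variation proof, and the constant you single out falls out directly. If a face plane $H=n^\perp$ moves with velocity $\dot n$, its normal displacement is $u(x)=-\langle x,\dot n\rangle$, the restriction of a linear function. Hence $\Delta u=2u$ on $H\cong\HH^2$, and $\int_F u\,dA=\frac12\int_{\partial F}\partial_\nu u\,ds$. Along an edge $e=F\cap F'$ one finds $\partial_\nu u_F+\partial_\nu u_{F'}=-\dot\theta_e$, constant along $e$. This works for any motion of the planes, so the reduction to rotations is not even needed. Step~1 alone already covers everything the paper uses the formula for. Your fallback would not work as described: closedness of $\sum l_i\,d\theta_i$ yields some primitive, but nothing ties that primitive to $\Vol$, and fixing an additive constant cannot do so.

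\textbf{The genuine gap is in Step~2.} On a truncated face $F^{\mathrm{tr}}$, the identity $\int_{F^{\mathrm{tr}}}u_F\,dA=\frac12\int_{\partial F^{\mathrm{tr}}}\partial_\nu u_F\,ds$ also contains the term $\frac12\int_{F\cap B_v}\partial_\nu u_F\,ds$ along each horocyclic arc. You drop this term, but it does not tend to zero when the ideal vertex moves.

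To see this, put $v=\infty$ in the upper half-space model. Near $v$ the face $F$ is a vertical half-plane with coordinates $(\sigma,z)$, and $u_F=\bigl(a_F+b_F\sigma+c_F(\sigma^2+z^2)\bigr)/z$. Since $F$ must keep passing through $v(t)$, one has $c_F=\dot v\cdot m_F$ up to a sign independent of $F$. Here $\dot v$ is the velocity of $v(t)$ in the chart $y\mapsto y/|y|^2$, and $m_F$ is the outward normal of the side of the cusp polygon $\Pi_v$ belonging to $F$. At height $z=h$ the arc term equals $\frac12 c_Fw_F+O(h^{-2})$, where $w_F$ is the Euclidean length of that side. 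Correspondingly, $\int_{F^{\mathrm{tr}}}u_F\,dA$ grows like $c_Fw_F\log h$. So your assertion that the remaining boundary terms give $-\frac12\sum l_e\,d\theta_e$ is false face by face.

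\textbf{How to repair it.} The missing idea is a cancellation around each cusp: the limits sum to $\pm\frac12\,\dot v\cdot\sum_F w_Fm_F=0$, because the Euclidean cusp polygon closes up. Alternatively, keep the coning decomposition in the cusped case. Each cone simplex has at most three ideal vertices, so after composing with a smooth family of isometries they become stationary. Then the horospheres can be held fixed, $u_F=O(1/z)$ near each cusp, and every arc term is $O(h^{-2})$. In either version, Step~3 has to be used inside Step~2: you need to know that $-\frac12\sum_e l^{\mathrm{tr}}_e(h)\,\dot\theta_e$ is independent of $h$ before letting $h\to\infty$.
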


In particular, for compact polyhedra an angle-nondecreasing deformation is
volume-nonincreasing. We apply this observation on nondegenerate intervals
and justify separately any passage to a degenerate endpoint. This principle
is at the heart of Atkinson's lower bounds \cite{Atkinson2011}.

\subsection{The regular tetrahedron}

Let \(T_\alpha\) denote the regular hyperbolic tetrahedron with dihedral angle \(\alpha\). Let \(n_1,n_2,n_3,n_4\) be outward unit normals to its faces. The Gram matrix of these outward normals has the form
\[
        G_\alpha=(g_{ij}),\qquad
        g_{ii}=1,\qquad
        g_{ij}=-\cos\alpha\quad (i\ne j).
\]
Its eigenvalues are
\[
        1+\cos\alpha
        \quad\text{with multiplicity }3,
        \qquad
        1-3\cos\alpha
        \quad\text{with multiplicity }1.
\]
By the Gram criterion for hyperbolic simplices, the Gram matrix of outward normals of a non-degenerate hyperbolic simplex must have signature \((3,1)\); see, for instance, \cite[Ch.~II, Sec.~3]{VinbergGeometry}. Here the signature condition is equivalent to
\[
        1-3\cos\alpha<0,
        \qquad\text{that is}\qquad
        \alpha<\arccos\frac13 .
\]
The lower bound for \(\alpha\) follows from vertex links. The link of a vertex of a hyperbolic tetrahedron is a spherical, Euclidean, or hyperbolic triangle according as the vertex is finite, ideal, or hyperideal. In the regular case the angles of this link are
\[
        \alpha,\alpha,\alpha .
\]
For a finite vertex the sum of the angles of the link is greater than \(\pi\), and for an ideal vertex it is equal to \(\pi\). Within the signature range above, the finite-volume vertex condition is therefore precisely
\[
        3\alpha\ge \pi .
\]
If \(3\alpha>\pi\), all vertices are finite; if \(3\alpha=\pi\), all vertices are ideal. If \(3\alpha<\pi\), the vertices become hyperideal, and one no longer obtains an ordinary finite-volume tetrahedron in \(\HH^3\). Thus the regular finite-volume hyperbolic tetrahedron exists exactly for
\[
        \frac{\pi}{3}\le\alpha<\arccos\frac13 .
\]
For \(\alpha=\pi/3\) it is the ideal regular tetrahedron, for \(\pi/3<\alpha<\arccos(1/3)\) it is compact, and as \(\alpha\to\arccos(1/3)\) it shrinks to a point, with a Euclidean regular tetrahedron as its rescaled limit.

We will need a formula for the volume of \(T_\alpha\). From the inverse Gram matrix of the regular simplex, or equivalently from the standard formula for the edge length of a regular hyperbolic tetrahedron, we get
\[
        \cosh l(\alpha)=\frac{\cos\alpha}{1-2\cos\alpha}.
\]
Here \(l(\alpha)\) is the length of any edge of \(T_\alpha\). By the Schl\"afli formula \eqref{eq:schlafli},
\[
        \frac{d}{d\alpha}\Vol(T_\alpha)=-3l(\alpha),
\]
since a tetrahedron has six edges and all its dihedral angles are equal to \(\alpha\). Since \(\Vol(T_\alpha)\to0\) as \(\alpha\to\arccos(1/3)\), for \(\pi/3<\alpha<\arccos(1/3)\) we obtain
\begin{equation}\label{eq:tet-volume}
        \Vol(T_\alpha)=
        3\int_\alpha^{\arccos(1/3)}
        \arcosh\left(
        \frac{\cos t}{1-2\cos t}
        \right)\,dt .
\end{equation}
Here and below \(\Lambda\) denotes the Lobachevsky function
\[
        \Lambda(x)=-\int_0^x\log|2\sin t|\,dt.
\]
For \(\alpha=\pi/3\), the preceding volume formula is understood as an
improper integral and gives the volume of the ideal regular tetrahedron
\[
        \vtet=3\Lob(\pi/3)=1.014941\ldots .
\]

\subsection{The Klein model}

Consider Minkowski space \(\RR^{3,1}\) with scalar product
\[
        \langle x,y\rangle=-x_0y_0+x_1y_1+x_2y_2+x_3y_3.
\]
Hyperbolic space is realized as the upper sheet of the hyperboloid
\[
        \HH^3=\{x\in\RR^{3,1}\mid \langle x,x\rangle=-1,\, x_0>0\}.
\]
Central projection
\[
        (x_0,x_1,x_2,x_3)\longmapsto
        \left(\frac{x_1}{x_0},\frac{x_2}{x_0},\frac{x_3}{x_0}\right)
\]
identifies \(\HH^3\) with the unit ball
\[
        \mathbb B^3=\{u\in\RR^3\mid |u|<1\}.
\]
This is the Klein model. Geodesics are represented by chords of the ball, and hyperbolic planes by intersections of the ball with Euclidean affine planes.

If \(u=(u_1,u_2,u_3)\) and \(r^2=|u|^2\), then the hyperbolic volume element in the Klein model is
\begin{equation}\label{eq:klein-volume-element}
        dV_{\HH^3}=\frac{du_1\,du_2\,du_3}{(1-r^2)^2}.
\end{equation}
In particular, the hyperbolic volume of any region \(\Omega\subset\mathbb B^3\) is not smaller than its Euclidean volume in the Klein model.

For two hyperplanes given in the hyperboloid model by outward unit normals \(n_1,n_2\), their dihedral angle \(\theta\) is determined by
\[
        \langle n_1,n_2\rangle=-\cos\theta.
\]
This formula will be used to derive the parameters of the model prisms.

\subsection{Hyperideal vertices and polar truncation}

We will use the standard language of generalized hyperbolic polyhedra. In the Klein model, a hyperideal vertex is an intersection point of extensions of faces lying outside the closed ball \(\overline{\mathbb B^3}\), such that its polar plane intersects the ball. In the projective model, a point \(v\) outside the ball determines a polar plane \(v^*\). This plane is defined by Lorentzian polarity and has the following property: \(v^*\) is orthogonal to every hyperbolic plane passing through the hyperideal vertex \(v\). This description of polar truncation is used, for example, by Ushijima for generalized hyperbolic tetrahedra \cite[Section~2]{Ushijima2006}; see also the discussion of polar planes and truncations in \cite[Section~2]{VesninEgorovUpper}.

Polar truncation of a hyperideal vertex means intersecting the polyhedron with the half-space bounded by the plane \(v^*\). The new face is called a truncation face. Since the polar plane is orthogonal to all original faces passing through \(v\), all new edges arising from such a truncation have dihedral angle \(\pi/2\). This property will be used below for the polyhedron obtained from \(P(3,2)\) by replacing ideal vertices with quadrilateral faces.

\subsection{Orthoschemes and Kellerhals' formula}

A hyperbolic orthoscheme in \(\HH^3\) is a simplex all of whose dihedral angles are right angles except, possibly, for three angles at consecutive edges. These three angles are called essential. We will also use complete orthoschemes: they may have ideal or hyperideal vertices, and hyperideal vertices are truncated by polar planes.

Let a complete three-dimensional orthoscheme have essential angles
\[
        \beta_{01},\qquad \beta_{12},\qquad \beta_{23}.
\]
Put
\[
        \tan\theta=
        \frac{
        \sqrt{\cos^2\beta_{12}-\sin^2\beta_{01}\sin^2\beta_{23}}
        }{
        \cos\beta_{01}\cos\beta_{23}
        },
        \qquad
        0\le \theta<\frac{\pi}{2}.
\]
Then Kellerhals' formula for the volume of a complete orthoscheme is
\begin{align}\label{eq:kellerhals}
        \Vol(\mathcal O)=\frac14\bigg(&
        \Lambda(\beta_{01}+\theta)-\Lambda(\beta_{01}-\theta)
        +\Lambda\left(\frac{\pi}{2}+\beta_{12}-\theta\right)  \\
        &+\Lambda\left(\frac{\pi}{2}-\beta_{12}-\theta\right)
        +\Lambda(\beta_{23}+\theta)-\Lambda(\beta_{23}-\theta)
        +2\Lambda\left(\frac{\pi}{2}-\theta\right)
        \bigg).
\end{align}
This formula was obtained by Kellerhals
\cite[Equations~(4)--(5) and Theorem~2]{Kellerhals1989}; it applies, in
particular, to complete orthoschemes with one hyperideal vertex, which will
occur below.

\subsection{Spherical polars and the Hodgson--Rivin criterion}\label{subsec:rhs-prelim}

We recall the polar construction and the characterization of
Hodgson--Rivin \cite[Theorem~1.1]{RivinHodgson1993}; see also the earlier
announcement \cite[Theorem~5]{HodgsonRivinSmith}.

Let \(P\subset \HH^3\) be a compact convex hyperbolic polyhedron.  If \(e\) is an
edge of \(P\), we write \(\theta_e\) for its interior dihedral angle and
\[
        \theta_e^{\rm ext}=\pi-\theta_e
\]
for its exterior dihedral angle.  Let \(v\) be a trivalent vertex of \(P\), and
let the three edges incident to \(v\) have interior dihedral angles
\(\theta_1,\theta_2,\theta_3\).

There are two spherical triangles associated with \(v\).  The first one is the
ordinary spherical link \(L_v\), obtained by intersecting \(P\) with a small metric
sphere centered at \(v\).  The angles of \(L_v\) are precisely
\(\theta_1,\theta_2,\theta_3\).  The second one is the spherical polar of this
link.  It is this polar triangle, equivalently the Gauss image of the vertex,
which occurs in the theorem of Hodgson--Rivin.  Its side lengths are the
exterior dihedral angles
\begin{equation}\label{eq:polar-triangle-sides}
        \pi-\theta_1,\qquad \pi-\theta_2,\qquad \pi-\theta_3 .
\end{equation}
Thus an edge of \(P\) with interior dihedral angle \(\theta_e\) corresponds in the
polar metric to a spherical geodesic segment of length \(\pi-\theta_e\).  This is
why, below, increasing an interior angle from \(\pi/2\) to \(\pi\) corresponds to
shrinking a polar side from length \(\pi/2\) to length \(0\).

The \emph{spherical polar} \(P^\circ\) is obtained as follows.  For every vertex
\(v\) of \(P\) take the polar triangle described above.  If two vertices of \(P\)
are joined by an edge \(e\), glue the corresponding sides of their polar triangles;
these two sides have the same length \(\pi-\theta_e\).  The resulting object is a
piecewise spherical cone metric on \(S^2\), combinatorially dual to \(P\):
\[
\begin{array}{c|c}
P & P^\circ \\
\hline
\text{face }F & \text{cone point }F^\circ,\\
\text{edge }e & \text{geodesic edge }e^\circ,\\
\text{vertex }v & \text{spherical triangle }L_v^\circ.
\end{array}
\]
This dual cell decomposition is called the \emph{polar cellulation}.  Its
vertices are the cone points corresponding to primal faces, its edges are
dual to primal edges, and its two-cells are dual to primal vertices.  Since
the polyhedra used here are trivalent, their polar two-cells are triangles.
An auxiliary geodesic triangulation drawn on the same cone metric need not,
without an additional argument, be this actual polar cellulation.

It is important to distinguish side lengths of polar triangles from their angles.
Let \(F\) be a face of \(P\), and let \(x\) be a vertex of \(F\).  If
\(\phi_x(F)\) is the ordinary angle of the hyperbolic polygon \(F\) at \(x\), then
the contribution of the polar triangle \(L_x^\circ\) to the cone angle at
\(F^\circ\) is
\[
        \pi-\phi_x(F).
\]
Consequently, if \(F\) has vertices \(x_1,\ldots,x_m\), then the cone angle of the
polar metric at \(F^\circ\) is
\begin{equation}\label{eq:polar-cone-angle-face}
        \omega(F^\circ)
        =\sum_{i=1}^m\bigl(\pi-\phi_{x_i}(F)\bigr)
        =2\pi+\Area(F).
\end{equation}
The last equality is the Gauss--Bonnet formula for the hyperbolic polygon \(F\).
In particular, the polar of an actual compact hyperbolic polyhedron has all cone
angles strictly larger than \(2\pi\).

A closed geodesic in a piecewise spherical cone metric means a closed curve which
is locally geodesic, including at cone points.  We shall use the following
characterization.

\begin{theorem}[{Hodgson--Rivin \cite[Theorem~1.1]{RivinHodgson1993}}]\label{thm:rhs}
A piecewise spherical cone metric \(M\) on \(S^2\) is the spherical polar of a
compact convex hyperbolic polyhedron if and only if the following conditions hold:
\begin{enumerate}[label=\textup{(RH\arabic*)}]
\item \(M\) has curvature \(+1\) away from finitely many cone points;
\item every cone angle of \(M\) is strictly larger than \(2\pi\);
\item every nonconstant closed local geodesic in \(M\) has length strictly
larger than \(2\pi\).
\end{enumerate}
When these conditions hold, the corresponding hyperbolic polyhedron is unique up
to congruence in \(\HH^3\).
\end{theorem}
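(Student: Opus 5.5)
The statement just given is the Hodgson--Rivin criterion (Theorem~\ref{thm:rhs}), which the paper cites rather than proves. Accordingly, I would not reprove it from scratch. I would sketch the standard route of \cite{RivinHodgson1993}, splitting it into necessity, uniqueness and existence.

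\emph{Necessity.} Condition (RH1) holds by construction, since \(P^\circ\) is glued from spherical triangles. Condition (RH2) is exactly \eqref{eq:polar-cone-angle-face}: the cone angle at \(F^\circ\) equals \(2\pi+\Area(F)\), which exceeds \(2\pi\). For (RH3), I would identify \(P^\circ\) with the Gauss image of \(P\) in the de Sitter sphere. A closed local geodesic then corresponds to a closed curve of support planes, and I would use the Gauss--Bonnet argument on the ``belt'' of faces the geodesic crosses. The resulting curve bounds a region on \(\partial P\) of positive area, so its length is at least \(2\pi\), with equality only in degenerate cases. A geodesic passing through cone points needs turning-angle bookkeeping. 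This step, a convexity argument in the spirit of the Schläfli and Gauss--Bonnet identities, is where the strict inequality must be extracted.

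\emph{Uniqueness.} I would run a Cauchy-type rigidity argument on the dual side. Suppose two polyhedra have isometric polars. Then they have the same combinatorics and the same exterior dihedral angles, and they have the same face angles, because the face angles are recovered from the cone-angle contributions \(\pi-\phi_x(F)\). A Cauchy sign-change lemma, or Schläfli-based rigidity as in Andreev's uniqueness proof, then forces them to be congruent.

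\emph{Existence.} This is the main obstacle. I would use a continuity method. Let \(\mathcal M\) be the space of cone metrics satisfying (RH1)--(RH3) with \(n\) cone points, and \(\mathcal P\) the space of compact polyhedra with \(n\) faces. First, the polar map \(\mathcal P\to\mathcal M\) is a local homeomorphism, by infinitesimal rigidity and a dimension count (both spaces have dimension \(3n-6\)). Second, it is proper. This is the hard part: one must show that degenerating polyhedra, whether a vertex going to infinity, a face collapsing, or a blowup, produce polar metrics that violate (RH2) or (RH3) in the limit. Concretely, a closed geodesic of length tending to \(2\pi\) appears, or a cone angle tends to \(2\pi\). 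Third, \(\mathcal M\) is connected. Properness plus local homeomorphism then gives surjectivity onto \(\mathcal M\). Connectedness of \(\mathcal M\) is itself nontrivial, and I would attempt it by deforming toward polars of regular polyhedra. For our purposes, only the statement is needed, as quoted from \cite{RivinHodgson1993}.
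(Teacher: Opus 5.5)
Your proposal agrees with the paper: the paper gives no proof of Theorem~\ref{thm:rhs} and imports it from \cite[Theorem~1.1]{RivinHodgson1993}, and your outline (necessity, a rigidity argument, and a continuity method via local homeomorphism, properness and connectedness of the space of admissible metrics) broadly follows the architecture of that source. Treat the sketch only as orientation, because it has two holes. First, in the uniqueness step, ``isometric polars have the same combinatorics'' is not automatic, since an isometry of cone metrics need not carry one polar cellulation to the other; the paper spends the end of the proof of Lemma~\ref{lem:direct-equiangular-edge-surgery} on exactly this kind of recovery. Second, for (RH3) the step from ``bounds a region of positive area'' to \(\ell>2\pi\) is asserted rather than argued. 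It is therefore the citation, not the sketch, that carries the weight.
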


We shall apply the theorem only in the following way.  A family of cone metrics
\(M_t\) will be prescribed by gluing spherical polar triangles.  The side lengths
of these triangles are prescribed exterior dihedral angles.  Once
\textup{(RH1)}--\textup{(RH3)} are verified, Theorem~\ref{thm:rhs} produces a
unique compact convex hyperbolic polyhedron with those interior dihedral angles.
The argument is modeled on Inoue's proof of right-angled edge surgery
\cite[Section~8]{Inoue2008}.  The fixed sides of our polar triangles have
length \(\pi-\alpha\), rather than \(\pi/2\), so both the local crossing
argument and the final recovery of the polar cellulation require additional
work.

We record the spherical facts needed for that modification.  They also fix the
terminology used in the proof.  If \(x\) is a vertex of a spherical
triangulation, its \emph{closed star}, denoted by \(\operatorname{st}(x)\), is
the union of all closed triangles containing \(x\).  Thus the number of
triangles in \(\operatorname{st}(x)\) is the valence of \(x\); it is not
assumed to be four.  Its \emph{open star} is
\[
        \operatorname{ost}(x)
        =\operatorname{st}(x)\setminus\partial\operatorname{st}(x).
\]
Equivalently, it is the union of the relative interiors of all simplices
containing \(x\).

For the remainder of this subsection we assume
\[
        \frac{\pi}{3}<\alpha<\frac\pi2 .
\]
Put
\begin{equation}\label{eq:polar-L-beta}
        L=\pi-\alpha,
        \qquad c=\cos L,
        \qquad
        \cos\beta=\frac{c}{1+c}.
\end{equation}
In this open range,
\begin{equation}\label{eq:L-beta-ranges}
        \frac\pi2<L<\frac{2\pi}{3},
        \qquad -\frac12<c<0,
        \qquad \frac\pi2<\beta<\pi .
\end{equation}
The number \(\beta\) is the angle of the equilateral spherical triangle with
side length \(L\).

\subsubsection*{Spherical development}

We shall use the standard developing-map construction for a piecewise
spherical cone metric; compare the unfolding in
\cite[proof of Lemma~8.7]{CharneyDavis1993}.  Let \(\Sigma\) be the finite set
of cone points and put \(M_{\mathrm{reg}}=M\setminus\Sigma\).  Every point of
\(M_{\mathrm{reg}}\) has a neighborhood isometric to an open subset of the
unit sphere, and the transition maps between such spherical charts are
restrictions of isometries of \(S^2\).  Choose one chart and continue it along
paths by successively unfolding adjacent spherical triangles across their
common geodesic sides.  Continuation around a loop can return rotated by the
holonomy, so in general it is not single-valued on \(M_{\mathrm{reg}}\).
After passing to the universal cover, paths with the same endpoints are
homotopic relative to their endpoints and continuation is single-valued.  One
therefore obtains a local isometry
\[
        \operatorname{dev}\colon
        \widetilde{M_{\mathrm{reg}}}\longrightarrow S^2 .
\]
It is equivariant with respect to a holonomy representation
\(\pi_1(M_{\mathrm{reg}})\to\operatorname{Isom}(S^2)\), but it need not be
injective.  Since it is a local isometry, it preserves the length of every
developed path and sends each local geodesic to a parametrized arc of a great
circle.

In particular, the length of a developed local geodesic is the length of the
parametrized arc, not the spherical distance between its endpoints.  A
parametrized arc may traverse the same great circle more than once and may
therefore have length larger than \(2\pi\).  In the only application below no
covering-space ambiguity is present: \(K_s\) is a two-triangle topological
disc.  We place \(ABC\) in \(S^2\) and place \(ABD\) on the other side of the
great circle through \(AB\), equivalently by reflecting the second triangle
across that great circle.  This directly develops all of \(K_s\).

\begin{lemma}[The two special polar triangles]\label{lem:special-polar-triangles}
Let \(0<s\le L\), and let \(T(s)\) be the spherical triangle with side
lengths \(L,L,s\).  Denote by \(\delta(s)\) either angle adjacent to the side
of length \(s\), and by \(\gamma(s)\) the opposite angle.  Then
\begin{equation}\label{eq:delta-gamma}
 \cos\delta(s)=\cot L\tan\frac{s}{2},
 \qquad
 \cos\gamma(s)=\frac{\cos s-c^2}{1-c^2}.
\end{equation}
In particular,
\begin{equation}\label{eq:delta-gamma-bounds}
        \frac\pi2<\delta(s)\le\beta,
        \qquad 0<\gamma(s)\le\beta .
\end{equation}
At \(s=L\), the triangle is equilateral and
\(\delta(L)=\gamma(L)=\beta\); as \(s\to0\), one has
\(\delta(s)\to\pi/2\) and \(\gamma(s)\to0\).
\end{lemma}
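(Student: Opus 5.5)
The plan is to apply the spherical law of cosines to \(T(s)\), then read off the bounds from monotonicity in \(s\), using \(\frac\pi2<L<\frac{2\pi}{3}\) and \(-\frac12<c<0\) from \eqref{eq:L-beta-ranges}.

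\emph{Formula for \(\gamma(s)\).} The angle \(\gamma(s)\) lies between the two sides of length \(L\) and is opposite the side \(s\). The law of cosines gives
\[
\cos s=\cos^2L+\sin^2L\cos\gamma=c^2+(1-c^2)\cos\gamma,
\]
which is the stated formula.

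\emph{Formula for \(\delta(s)\).} The angle \(\delta(s)\) lies between the sides \(s\) and \(L\), opposite the other side \(L\). The law of cosines gives \(\cos L=\cos L\cos s+\sin L\sin s\cos\delta\), so
\[
\cos\delta=\frac{\cos L(1-\cos s)}{\sin L\sin s}=\cot L\tan\frac s2 .
\]
Alternatively, drop the perpendicular from the apex to the midpoint of the base, which splits \(T(s)\) into two right triangles with hypotenuse \(L\) and leg \(s/2\). The right-triangle identity \(\cos\delta=\tan(s/2)\cot L\) then gives the same result.

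\emph{Bounds.} Since \(\cot L<0\) and \(\tan(s/2)>0\) for \(0<s\le L<\pi\), we get \(\cos\delta<0\), so \(\delta>\pi/2\). As \(s\) increases, \(\tan(s/2)\) increases, so \(\cos\delta\) decreases and \(\delta\) is increasing in \(s\). Hence \(\delta(s)\le\delta(L)\).

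For \(\gamma\), the map \(s\mapsto(\cos s-c^2)/(1-c^2)\) is decreasing on \((0,L]\), so \(\gamma\) is increasing and \(\gamma(s)\le\gamma(L)\). Positivity holds because the expression is \(<1\) for \(s>0\), since \(\cos s<1\), and it is \(\ge -1\) since \(\cos s\ge\cos L=c\) and \(c-c^2\ge c^2-1\), i.e.\ \((2c+1)(c-1)\le0\) for \(c\ge -1/2\).

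\emph{Endpoint values.} At \(s=L\) the triangle is equilateral. Check directly that \(\cos\gamma(L)=(c-c^2)/(1-c^2)=c/(1+c)=\cos\beta\). Likewise
\[
\cot L\tan\frac L2=\frac{c}{\sin L}\cdot\frac{1-c}{\sin L}=\frac{c(1-c)}{1-c^2}=\frac{c}{1+c},
\]
so \(\delta(L)=\beta\). The limits as \(s\to0\) follow from continuity: \(\cos\delta\to0\) and \(\cos\gamma\to1\).

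The only delicate points are checking that a nondegenerate triangle with sides \(L,L,s\) exists for \(0<s\le L\), which follows from the triangle inequalities and \(2L+s<2\pi\) (true since \(3L<2\pi\)), and ensuring the correct sign conventions in the right-triangle identity. I expect neither to be a serious obstacle.
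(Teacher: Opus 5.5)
Your proposal is correct and follows the paper's proof: both apply the spherical cosine rule to a side of length \(L\) and to the side of length \(s\) to obtain the two formulas, then use \(\cot L<0\) to get \(\cos\delta<0\). You also spell out the monotonicity in \(s\), the existence check, and the verification \(\cos\gamma(L)=\cos\delta(L)=c/(1+c)=\cos\beta\), which the paper summarizes as ``the remaining assertions follow directly.''
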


\begin{proof}
Apply the spherical cosine rule first to a side of length \(L\) and then to
the side of length \(s\):
\[
 \cos L=\cos L\cos s+\sin L\sin s\cos\delta,
 \qquad
 \cos s=c^2+(1-c^2)\cos\gamma .
\]
These equations give \eqref{eq:delta-gamma}.  Since \(\cot L<0\) and
\(0<s\le L<\pi\), the first cosine is negative.  The remaining assertions
follow directly, using the equilateral cosine rule at \(s=L\).
\end{proof}

\begin{lemma}[Strict star-passage estimate]
\label{lem:ordinary-star-passage}
Let \(Q\) be the spherical polar of a compact equiangular hyperbolic
polyhedron with exterior dihedral angle \(L>\pi/2\), and let \(x\) be a
vertex of its polar cellulation. Let \(\lambda\) be a local geodesic, and let
\(\eta\) be the closure of a component of
\(\lambda\cap\operatorname{ost}(x)\).  Assume that the endpoints of \(\eta\)
lie on \(\partial\operatorname{st}(x)\). Then
\[
        \ell(\eta)>\pi .
\]
\end{lemma}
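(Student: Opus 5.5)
The proof works in geodesic polar coordinates centred at the cone point \(x\). Since the polyhedron is equiangular, every polar triangle of \(Q\) is the equilateral spherical triangle with side \(L\) and angles \(\beta\). So \(\operatorname{st}(x)\) is a cone over a closed polygon formed by \(k\) such triangles glued cyclically around \(x\), with cone angle \(k\beta>2\pi\) by \eqref{eq:polar-cone-angle-face}. Its boundary \(\partial\operatorname{st}(x)\) consists of the \(k\) sides opposite \(x\).

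\textbf{Step 1 (radial distance to the boundary).} Let \(h\) be the altitude of the equilateral triangle of side \(L\) from \(x\) to the opposite side. The right-triangle relation gives \(\cos L=\cos h\cos(L/2)\). Since \(L>\pi/2\), we have \(\cos L<0\) and \(\cos(L/2)>0\), hence \(\cos h<0\) and \(h>\pi/2\). Within one triangle the distance from \(x\) to a point of the opposite side is minimised at the foot of the altitude. Therefore every point of \(\partial\operatorname{st}(x)\) lies at radial distance at least \(h>\pi/2\) from \(x\), measured inside the star. The only cone point in \(\operatorname{ost}(x)\) is \(x\) itself, so \(\eta\) is an honest spherical geodesic except possibly at \(x\).

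\textbf{Step 2 (case \(x\in\eta\)).} Then \(\eta\) splits at \(x\) into two radial segments. Each runs from \(x\) to a point of \(\partial\operatorname{st}(x)\), so each has length at least \(h\) by Step 1. Hence \(\ell(\eta)\ge 2h>\pi\). The local-geodesic condition at \(x\) is not even needed here.

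\textbf{Step 3 (case \(x\notin\eta\)).} Here I would develop \(\eta\) together with a thin sector around it onto \(S^2\), sending \(x\) to a point \(\tilde x\) (the north pole), as in the spherical development described earlier. The image of \(\eta\) is an arc of a great circle \(C\). Let \(d\) be the distance from \(\tilde x\) to \(C\) and parametrise \(C\) by arclength \(t\) from its closest point. Then the spherical cosine rule gives \(\cos r(t)=\cos d\cos t\) for the distance \(r\) to \(\tilde x\). The endpoints of \(\eta\) satisfy \(r\ge h\), i.e. \(\cos d\cos t\le\cos h<0\), which forces \(|t|>\pi/2\) at each endpoint; I would note that \(\cos d\neq 0\), so \(d<\pi/2\) is automatic. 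Meanwhile interior points of \(\eta\) lie in \(\operatorname{ost}(x)\), where the radial distance is \(<L<\pi\). So \(\eta\) cannot cross the region \(|t|>\pi/2\), and its two endpoints lie on opposite sides, \(t_1<-\pi/2\) and \(t_2>\pi/2\). This gives \(\ell(\eta)=t_2-t_1>\pi\).

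\textbf{Main obstacle.} The main obstacle is justifying that the development in Step 3 is faithful. Polar angle around \(x\) must be a well-defined monotone coordinate along \(\eta\), and the star's cone structure must not let \(\eta\) wrap. I would argue that along a geodesic avoiding \(x\), the angular coordinate is strictly monotone, and the angular sweep of the segment with \(|t|\le\pi/2+\varepsilon\) is at most about \(\pi\). Since \(\pi<2\pi<k\beta\), the cut-open star (a sector of angle \(k\beta\)) accommodates this sweep without self-overlap. On that sector the development is an isometry onto a spherical sector of radius less than \(\pi\), so the computation on \(S^2\) is legitimate. Strictness of the final inequality comes from the strict bound \(h>\pi/2\).
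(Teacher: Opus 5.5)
There is a genuine gap in Step~3, and it stems from a false claim in Step~1.

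\textbf{The error in Step~1.} In the equilateral triangle $xuv$ the altitude satisfies $h>\pi/2$. In that case the foot of the altitude is the \emph{farthest} point of $uv$ from $x$, not the nearest. Parametrize $uv$ by the distance $t$ from the foot. Then $\cos r=\cos h\cos t$ with $\cos h<0$ and $\cos t\ge\cos(L/2)>0$, so $r\in[L,h]$, and the minimum $L$ is attained at the link vertices $u,v$.

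Thus $\partial\operatorname{st}(x)$ lies at radial distance $\ge L$. This is still $>\pi/2$, so Step~2 survives in the form $\ell(\eta)\ge 2L>\pi$.

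\textbf{Why Step~3 fails.} Points of $\operatorname{ost}(x)$ near the midpoint of a boundary side have radial distance close to $h>L$. So your claim in Step~3 that the radial distance in $\operatorname{ost}(x)$ is $<L$ is also false. Together with ``boundary at $r\ge h$'' it would even contradict $\partial\operatorname{st}(x)\subset\overline{\operatorname{ost}(x)}$.

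There is therefore no radial separation between the open star and its boundary, and the conclusion $t_1<-\pi/2<\pi/2<t_2$ is unsupported. What you legitimately obtain is only $\cos t_1<0$ and $\cos t_2<0$. If $\eta$ meets $\{r\le\pi/2\}$, this does give $\ell(\eta)>\pi$. But nothing in your argument excludes a passage lying entirely in $\{r>\pi/2\}$, where the developed $\eta$ would be a short arc of length $<\pi$.

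\textbf{What is missing.} Radial information alone cannot close the gap. Take a region star-shaped about $\tilde x$ whose boundary lies in $\{r>\pi/2\}$ but has a thin tongue reaching toward $-\tilde x$. It admits arbitrarily short boundary-to-boundary chords across the tongue.

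The missing ingredient is the shape of $\partial\operatorname{st}(x)$. Its sides are geodesic and meet at the link vertices with star-side angle $2\beta>\pi$, since $\cos\beta=c/(1+c)<0$. Hence the complement of the open star is locally convex.

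One way to finish: if $\eta\subset\{r>\pi/2\}$, its azimuth sweep around $\tilde x$ is $<\pi$. In a gnomonic chart centred at $-\tilde x$, consider the polygon bounded by the rays to the endpoints $p_1,p_2$ and the boundary arc between them. All its angles are $<\pi$ (the link vertices contribute $2\pi-2\beta$), so it is convex and contains the chord $p_1p_2$. Then the interior of $\eta$ would miss $\operatorname{ost}(x)$, a contradiction.

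\textbf{Comparison with the paper.} The paper avoids this issue by quoting Inoue's star lemma for $\ell(\eta)\ge\pi$ and proving only strictness. An arc of length exactly $\pi$ has antipodal endpoints, which is impossible because both developed endpoints have negative scalar product with $\tilde x$.

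Finally, the faithfulness issue you identify as the main obstacle is not one. Developing the universal cover of the punctured star preserves radial distance to $x$ whether or not the development is injective.
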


\begin{proof}
Inoue's star estimate gives \(\ell(\eta)\ge\pi\); see
\cite[Lemma~8.4]{Inoue2008}.  We only have to exclude equality in the present
equiangular range.

Develop the centre of the star to a point \(x\in S^2\). If \(uv\) is a
boundary side of the star, then it is opposite \(x\) in an equilateral
spherical triangle of side length \(L\). Hence
\[
        x\cdot u=x\cdot v=\cos L<0.
\]
Every point of the short side \(uv\) is the normalization of a positive
linear combination of \(u\) and \(v\). Therefore every developed boundary
point \(p\) satisfies
\[
        x\cdot p<0,
        \qquad d_{S^2}(x,p)>\frac\pi2.
\]
If \(\eta\) passes through \(x\), its two parts from \(x\) to the boundary
both have length greater than \(\pi/2\). If it avoids \(x\), develop the
punctured star and \(\eta\) to \(S^2\). A great-circle arc of length exactly
\(\pi\) has antipodal endpoints, whereas both developed endpoints have
negative scalar product with the same vector \(x\). Thus equality is
impossible in either case.
\end{proof}

The lemma will be applied below only inside the known polar metric
\(P_1^\circ\), never directly to a candidate metric whose realizability has
not yet been proved.

We shall call a geodesic segment \emph{transverse} to a spherical
triangulation if it avoids the vertices and meets every edge that it crosses
in the interior of that edge and with distinct tangent directions.

We also need a local limiting convention.  A \emph{one-sided limit} of a
transverse passage through a fixed finite union of triangles is obtained by
moving its developed supporting great circle towards an edge or a vertex,
always from the same chosen side of the triangulation.  The developed
oriented arcs, together with their marked intersections with the relevant
sides, are required to converge.  Thus a one-sided limit is a portion of the
original nontransverse geodesic viewed in a chosen local carrier; it is not a
new globally defined geodesic and no closed perturbation is being asserted.

\begin{lemma}[Crossing the reflex two-triangle patch]
\label{lem:reflex-patch-crossing}
Let \(0<s\le L\), and let
\[
        K_s=ABC\cup_{AB}ABD
\]
be the union of two spherical triangles with
\[
 |AB|=s,\qquad |AC|=|BC|=|AD|=|BD|=L,
\]
developed on opposite sides of \(AB\).  If a transverse local geodesic segment
\(\kappa\) crosses, in this order, the sides
\[
        AC,\quad AB,\quad AD,
\]
then
\[
        \ell(\kappa)>\pi.
\]
The symmetric assertion with \(BC,BA,BD\) in place of \(AC,AB,AD\) also
holds.

The same strict estimate holds for every nonconstant one-sided limit of either
of these transverse passages.  In particular, the marked intersections with
\(AC,AB,AD\), or with \(BC,BA,BD\), may converge to endpoints of those sides,
and the limiting passage may run along a boundary edge of its carrier.

There is also a vertex version. Suppose that a local geodesic in \(K_s\)
passes through \(A\), with one germ in \(ABC\) and the other in \(ABD\), and
that the link angle between these germs on the \(K_s\)-side is at least
\(\pi\). Its
subarc from \(BC\) to \(BD\) through \(A\) has length \(>\pi\).
Symmetrically, a local geodesic through \(B\) from \(AC\) to \(AD\) has
length \(>\pi\).
\end{lemma}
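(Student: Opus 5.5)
The estimate follows from one explicit development of \(K_s\) into \(S^2\) together with a convexity argument on the great circle carrying \(\kappa\). Place \(A\) and \(B\) on a great circle \(\Gamma\), with \(C\) on one side and \(D\) the reflection of \(C\) across \(\Gamma\); this develops \(K_s\) as in the convention fixed before Lemma~\ref{lem:special-polar-triangles}. The developed \(\kappa\) is an arc of a great circle \(\sigma\). It meets the developed segments \(AC\), \(AB\), \(AD\) at points \(p\), \(q\), \(r\), in this order, and \(\ell(\kappa)\ge \ell(pr)\), where \(pr\) is the parametrized subarc. It therefore suffices to show that this parametrized arc from \(p\) through \(q\) to \(r\) has length \(>\pi\).

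The key observation is that \(p\) and \(r\) both lie in the closed lune cut out by the great circles through \(AC\) and \(AD\). Each is at distance at most \(L\) from \(A\) along a side of length \(L\). The interior angle of \(K_s\) at \(A\) is \(2\delta(s)\), and by \eqref{eq:delta-gamma-bounds} this is \(>\pi\), so the patch is reflex at \(A\). I would use the reflexness as follows. A great-circle arc that enters the patch through \(AC\), crosses \(AB\) and exits through \(AD\) must wind around \(A\) on the reflex side, sweeping a link angle \(2\delta(s)>\pi\) as seen from \(A\). I would make this quantitative by considering the function \(f(t)=\langle \sigma(t),A\rangle\) along the arc, which has the form \(R\cos(t-t_0)\). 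The points \(p\) and \(r\) lie on sides through \(A\) of length \(L<\pi\). Monotonicity of the angular coordinate around \(A\) along \(\sigma\) then says that an arc with angular sweep \(\ge\pi\) about a point not on it is longer than \(\pi\). Equivalently, \(p\) and \(\sigma(t_p+\pi)=-p\) cannot both precede \(r\) unless the sweep exceeds \(\pi\). Concretely: if \(\ell(pr)\le\pi\), the arc \(pr\) lies in a closed hemisphere bounded by a great circle through \(p\). Such an arc subtends an angle \(<\pi\) at any point of the open opposite hemisphere, and \(\le\pi\) in general, with equality only if the arc passes through \(A\) or \(-A\). This contradicts the sweep \(2\delta(s)>\pi\). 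The case where \(\sigma\) passes through \(-A\) would be excluded because \(-A\) lies at distance \(\pi\) from \(A\), outside \(K_s\), since all points of \(K_s\) are within \(L<\pi\) of \(A\).

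For one-sided limits I would pass to the limit in the same inequality. The sweep about \(A\) stays \(\ge 2\delta(s)>\pi\) in the limit, because the marked intersection points converge along the chosen side, and the strict gap \(2\delta(s)-\pi\) is uniform, so strictness survives. When the marked points tend to endpoints, including \(A\) itself, I would treat that case as the vertex version. For the vertex version, the subarc from \(BC\) through \(A\) to \(BD\) consists of two pieces from \(A\) to the opposite sides \(BC\) and \(BD\). By the reflexness hypothesis these germs make link angle \(\ge\pi\) on the \(K_s\)-side, so after development they continue a single great circle through \(A\). Each piece has length at least the distance from \(A\) to the line \(BC\) inside triangle \(ABC\). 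I would bound this from below by \(\pi/2\) with strict inequality, using the polar computation of Lemma~\ref{lem:ordinary-star-passage}: \(\langle A,u\rangle\le\cos L<0\) for \(u=C\), while for \(u=B\) one has \(\cos s\), which can be positive, so the argument needs care here.

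The main obstacle is exactly this sign issue. When \(s<\pi/2\), points of \(BC\) near \(B\) have positive inner product with \(A\), so the simple hemisphere argument fails. I would instead use the angle \(\delta(s)>\pi/2\) at \(B\): the geodesic from \(A\) to a point of \(BC\) near \(B\) forms, together with \(AB\), a triangle with an obtuse angle at \(B\). Combined with the condition that the germ makes angle \(\ge\pi\), this should force the continuation through \(A\) to be long. If this fails, I would fall back on the sweep-angle argument about \(B\) instead of about \(A\).
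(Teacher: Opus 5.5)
Your proposal leaves the vertex version unproved for \(s<\pi/2\). You stop at ``this should force the continuation through \(A\) to be long'' and a fallback you do not carry out.

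Two ingredients of that sketch are also wrong. Put \(u=\angle PAB\) and \(v=\angle BAR\). The developed path is a single great circle through \(A\) only when \(u+v=\pi\); the hypothesis allows \(u+v>\pi\), and then the path bends at \(A\). Bounding each piece separately by the distance from \(A\) to the opposite side cannot work, since that distance is at most \(s<\pi/2\). The two pieces have to be coupled through \(u+v\ge\pi\), and a sweep about \(B\) does not apply directly to a path that bends at \(A\).

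The missing step is a trigonometric identity. The triangle \(ABP\) has angle \(\delta(s)\) at \(B\), so the four-part cotangent formula gives
\[
 \sin s\,\cot x=\cos s\cos u+\cot\delta(s)\,\sin u,\qquad x=d(A,P),
\]
and similarly for \(y=d(A,R)\) with \(v\). This is equivalent to the identity \(\cot x=\cot s\cos u+\frac{c}{2rz}\sin u\) that the paper derives in coordinates. Since \(\pi\le u+v<2\pi\) and \(|u-v|<\pi\), one has \(\cos u+\cos v\le 0<\sin u+\sin v\). With \(\cos s>0\) and \(\cot\delta(s)<0\) this gives \(\cot x+\cot y<0\), hence \(\sin(x+y)<0\) and \(x+y>\pi\).

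Your hemisphere argument does handle \(s\ge\pi/2\). Strictness at \(s=\pi/2\) still needs one observation: \(P=B\) would mean \(u=0\), hence \(v\ge\pi>\delta(s)\), which is impossible.

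The transverse part is correct and takes a different route from the paper. The paper uses explicit coordinates and the positive relation \(A+B+\mu(C+D)=0\) to get \(q_0=-\mu(p_0+r_0)\). Hence \(-Q\) lies on the short arc \(PR\) and \(\ell(\kappa)=2\pi-d(P,R)\).

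You use instead that along a great circle \(\sigma\) avoiding \(\pm A\), the direction seen from \(A\) is strictly monotone, each direction occurs once, and every half-circle sweeps exactly \(\pi\). The order \(P,Q,R\) forces the sweep through the direction of \(AB\), that is, across the reflex sector of angle \(2\delta(s)>\pi\). Your argument is coordinate-free and needs only \(\delta(s)>\pi/2\). The paper's identity gives the exact length, which makes its limit argument immediate.

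Three points need repair.
\begin{enumerate}
\item \(\pm A\notin\sigma\) must hold for the whole great circle. It does, because otherwise \(\sigma\) would be both the great circle of \(AC\) and that of \(AD\). Your reason, \(-A\notin K_s\), concerns only \(\kappa\).
\item The claims ``sweep \(\ge\pi\) implies length \(>\pi\)'' and ``equality only if the arc passes through \(\pm A\)'' are false, since a half-circle has sweep and length \(\pi\). Only the strict inequality \(2\delta>\pi\) is needed.
\item A limiting marked point equal to \(A\) is not an instance of the vertex version, which concerns passages from \(BC\) to \(BD\). Handle it by noting that \(\kappa_n\) is the long arc, so \(\ell(\kappa_n)=2\pi-d(P_n,R_n)\), and pass to the limit as the paper does.
\end{enumerate}
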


\begin{proof}
Put
\[
 q=\sin\frac{s}{2},\qquad r=\cos\frac{s}{2},\qquad
 z=\sqrt{1-c^2-q^2},
\]
where \(c=\cos L<0\).  Since
\[
 q^2\le\sin^2\frac L2=\frac{1-c}{2},
\]
we have
\[
 z^2\ge\frac{(1-c)(1+2c)}2>0.
\]
After a rotation of \(S^2\), the developed vertices are
\[
 \begin{aligned}
 A&=(r,-q,0),& B&=(r,q,0),\\
 C&=(c/r,0,z/r),&D&=(c/r,0,-z/r).
 \end{aligned}
\]
Indeed, these vectors are unit vectors,
\[
 A\cdot B=\cos s,
 \qquad
 A\cdot C=B\cdot C=A\cdot D=B\cdot D=c,
\]
and \(D\) is the reflection of \(C\) across the plane of the great circle
containing \(AB\).  Thus the coordinates realize exactly the two prescribed
triangles.

They also give the positive linear relation
\begin{equation}\label{eq:reflex-patch-linear-relation}
        A+B+\mu(C+D)=0,
        \qquad
        \mu=-\frac{r^2}{c}>0 .
\end{equation}

Let \(P\in AC\), \(Q\in AB\), and \(R\in AD\) be the three crossing points.
The developed segment lies on a great circle
\(G=\nu^\perp\cap S^2\).  Choosing the sign of its normal \(\nu\), write
\[
 \nu\cdot A=-a<0,\qquad
 \nu\cdot B=b>0,\qquad
 \nu\cdot C=u>0,\qquad
 \nu\cdot D=v>0.
\]
The signs follow from the three transverse crossings.  Unnormalized vectors
on the rays through the crossing points are
\[
 p_0=uA+aC,\qquad
 q_0=bA+aB,\qquad
 r_0=vA+aD.
\]
All coefficients are positive, so their normalizations are precisely the
points \(P,Q,R\) on the short spherical sides.

Taking the scalar product of
\eqref{eq:reflex-patch-linear-relation} with \(\nu\) gives
\[
        b-a=-\mu(u+v).
\]
Using \(B=-A-\mu(C+D)\), we therefore obtain
\[
 \begin{aligned}
 q_0
 &=bA+aB\\
 &=(b-a)A-a\mu(C+D)\\
 &=-\mu\bigl((u+v)A+a(C+D)\bigr)\\
 &=-\mu(p_0+r_0).
 \end{aligned}
\]
The points \(P\) and \(R\) cannot be antipodal.  Otherwise their common line
would belong to both planes \(\operatorname{span}(A,C)\) and
\(\operatorname{span}(A,D)\), whose intersection is
\(\operatorname{span}(A)\) because the displayed coordinates make
\(A,C,D\) linearly independent.  This would force \(P=\pm A\), contrary to
transversality.

Consequently the antipodal point \(-Q\) lies in the interior of the short
spherical arc from \(P\) to \(R\): its direction is a positive linear
combination of the directions of \(P\) and \(R\).  Hence \(Q\) lies on the
complementary long arc from \(P\) to \(R\).  Since the developed segment
\(\kappa\) passes through \(Q\), it is this long arc, and
\[
        \ell(\kappa)=2\pi-d_{S^2}(P,R)>\pi.
\]
The case on the \(B\)-side follows by interchanging \(A\) and \(B\).

We next justify the assertion about one-sided limits.  Let
\(\kappa_n\) be transverse passages of the type just considered, with marked
points \(P_n\in AC\), \(Q_n\in AB\), and \(R_n\in AD\), and suppose that they
converge from the chosen side to a nonconstant limiting passage \(\kappa\).
The transverse part of the proof shows that \(\kappa_n\) is the long arc from
\(P_n\) to \(R_n\); hence
\[
        \ell(\kappa_n)=2\pi-d_{S^2}(P_n,R_n).
\]
After passing to the limit, write \(P_n\to P\in AC\) and
\(R_n\to R\in AD\).  The points \(P\) and \(R\) cannot be distinct and
antipodal.  Indeed, their common line would then lie in both
\(\operatorname{span}(A,C)\) and \(\operatorname{span}(A,D)\).  As above, the
intersection of these two planes is \(\operatorname{span}(A)\), so one of the
two points would be \(-A\); but \(-A\) lies on neither of the short sides
\(AC,AD\).  Therefore \(d_{S^2}(P,R)<\pi\).  The choice of one side fixes the
long, rather than the short, limiting arc, and continuity gives
\[
        \ell(\kappa)=2\pi-d_{S^2}(P,R)>\pi.
\]
If \(P=R\), the limiting oriented arc is a full great circle and has length
\(2\pi\), so the same conclusion holds.  This argument also covers a marked
point arriving at a vertex or a limiting great circle coinciding with a
boundary side: only the endpoints and the chosen long arc enter the length
calculation.  The \(B\)-side limits are identical by symmetry.

It remains to prove the vertex version. Let \(P\in BC\), \(R\in BD\), and
put
\[
        x=d(A,P),\qquad y=d(A,R).
\]
Let \(u=\angle PAB\) and \(v=\angle BAR\), measured inside the two special
triangles.  Then
\[
        0\le u,v\le\delta<\pi,
        \qquad
        \pi\le u+v<2\pi,
\]
where the lower bound for the sum is local geodesicity at \(A\).

If \(s\ge\pi/2\), every point of \(BC\) and \(BD\) is at distance at least
\(\pi/2\) from \(A\). Indeed, its representing vector is the normalization
of a nonnegative linear combination of the endpoint vectors, and their scalar
products with \(A\) are \(\cos s\le0\) and \(c<0\). Equality can occur only
when \(s=\pi/2\) and the point is \(B\). It cannot occur for both \(P\) and
\(R\), since then \(u=v=0\), contrary to \(u+v\ge\pi\). Hence
\(x+y>\pi\).

Suppose that \(s<\pi/2\). In the tangent plane at \(A\), take the direction
of \(AB\) as angle zero. Intersecting the great-circle ray which makes angle
\(u\) with \(AB\) with the great circle \(BC\), using the displayed
coordinates, gives
\[
        \cot x=\cot s\cos u+\frac{c}{2rz}\sin u.
\]
For completeness, write the ray as
\(\cos x\,A+\sin x\,\tau(u)\), take its scalar product with \(B\times C\),
and solve the resulting linear equation for \(\cot x\).
The identical calculation in \(ABD\) gives
\[
        \cot y=\cot s\cos v+\frac{c}{2rz}\sin v.
\]
Here \(\cot s>0\), \(c/(2rz)<0\), and
\[
 \cos u+\cos v
 =2\cos\frac{u+v}{2}\cos\frac{u-v}{2}\le0,
 \qquad
 \sin u+\sin v>0.
\]
Indeed, \(|u-v|<\pi\), so the second cosine in the product is positive.
Consequently \(\cot x+\cot y<0\). Since \(0<x,y<\pi\),
\[
 \sin(x+y)=\sin x\sin y\,(\cot x+\cot y)<0,
\]
and therefore \(x+y>\pi\). The assertion through \(B\) follows by symmetry.
\end{proof}

\subsection{Atkinson's decomposition}

Let \(P\) be a non-obtuse hyperbolic polyhedron. We use the decomposition
of \cite[Section~4]{Atkinson2011}, which allows arbitrary non-obtuse angles.
Atkinson first considers the double \(Q_P\) of \(P\), which is a hyperbolic
cone-manifold whose cone angle along an edge \(e\) is twice the dihedral angle
of \(P\) at \(e\).  He then forms the topological right-angled orbifold
\(Q_P^\perp\) by replacing all cone angles by \(\pi\).  We write \(P^\perp\)
for its quotient by the reflection which interchanges the two copies of
\(P\).  This notation is topological: \(P^\perp\) need not itself be realized
as a compact right-angled hyperbolic polyhedron.

In \(Q_P^\perp\), prismatic \(4\)-circuits correspond to Euclidean
two-dimensional suborbifolds of type \(S^2(2,2,2,2)\).  Choose a reduced
Bonahon--Siebenmann decomposition along such
suborbifolds. Its atoroidal components are canonical, while adjacent prism
regions separated by a layer-preserving gluing are amalgamated; see
\cite[Sections~4.2 and 6.4]{Atkinson2011}. The decomposition descends to
\(P^\perp\), and hence to the original polyhedron, along topological
quadrilaterals. A polyhedron is said to be of \emph{graph type} when no
atoroidal component occurs. In that case we use the reduced prism graph
\(G(P)\) of Atkinson--Rafalski, rather than an arbitrary visible subdivision
of the boundary of \(P\) into prisms.

\begin{proposition}[{\cite[Proposition~2]{Atkinson2011}}]\label{prop:atkinson-realization}
Let \(Q\) be a polyhedral component corresponding to an atoroidal component of the Bonahon--Siebenmann orbifold decomposition of a non-obtuse polyhedron \(P\). Let \(R\) be the abstract polyhedron obtained from \(Q\) by adding one triangular or quadrilateral face for each boundary component created by the cutting. The old edges are assigned the original dihedral angles, while all newly introduced edges are assigned angle \(\pi/2\). Then \(R\) is realized as a hyperbolic polyhedron.
\end{proposition}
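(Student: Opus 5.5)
The proposition is quoted from \cite[Proposition~2]{Atkinson2011}. To check it directly, I would verify the hypotheses of Andreev's theorem (Theorem~\ref{thm:andreev}) for the labelled abstract polyhedron \(R\). Near each boundary component created by cutting, \(R\) is obtained from \(Q\) by capping a topological triangle or quadrilateral with a new face \(F\). The new edges are the sides of \(F\), with angle \(\pi/2\). The new vertices are the points where an old edge \(e\) of \(P\), crossed by the cutting circuit, meets \(F\). Such a vertex is trivalent, with angles \(\pi/2,\pi/2,\Theta(e)\). So conditions (1) and (2) hold at the new vertices, since their angle sum is \(\pi+\Theta(e)\ge\pi\), and they hold at the old vertices because they hold in \(P\). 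The degenerate cases (tetrahedron, triangular prism) would be checked separately. If \(R\) has at most four vertices it is a tetrahedron handled by the Gram criterion. If \(R\) is a triangular prism, condition (6) is a direct count.

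The substance is conditions (4), (5) and (7). I would argue by translating every offending circuit into a suborbifold of the doubled, right-angled orbifold \(Q_P^\perp\).
\begin{itemize}
\item \emph{Circuits avoiding the new faces.} A prismatic \(3\)- or \(4\)-circuit of \(R\) that avoids all new faces is already a circuit of \(P\). Its Andreev inequality therefore holds in \(P\), which is realized.
\item \emph{Circuits through a new face.} A circuit that passes through a new face \(F\) uses at least two edges of \(F\), each with angle \(\pi/2\).
\begin{itemize}
\item A \(3\)-circuit through \(F\) would have angle sum at least \(\pi\). Doubling it gives a spherical or bad \(2\)-suborbifold, or a compressing disc, inside the atoroidal component. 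I would rule this out by irreducibility of the component, using that \(P\) itself has no such circuits.
\item A \(4\)-circuit with angle sum \(2\pi\) through new faces gives an incompressible Euclidean suborbifold \(S^2(2,2,2,2)\) in the component. This contradicts atoroidality unless it is boundary-parallel. In the boundary-parallel case the added face \(F\) makes the circuit non-prismatic, since two of its edges then lie in one face.
\end{itemize}
\item \emph{Condition (7).} Condition (7) concerns ideal vertices, which in the paper's application arise only at \(\alpha=\pi/3\) or \(\pi/2\). I would treat it the same way, via cusp cross-sections being Euclidean suborbifolds.
\end{itemize}

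The main obstacle is the careful correspondence between prismatic circuits of \(R\) meeting new faces and essential suborbifolds of the component. In particular one must show that the Bonahon--Siebenmann cutting surfaces are disjoint and non-parallel, so that two added faces never produce a new short circuit between them. If this combinatorial route becomes too delicate, the fallback is a topological argument, as Atkinson does. The component, with its boundary made totally geodesic by declaring the new edges right-angled, is an atoroidal Haken orbifold with no bad suborbifolds. Thurston's hyperbolization for orbifolds, via the doubled manifold, then gives a hyperbolic structure, and non-obtuseness together with Andreev's uniqueness identifies it with \(R\).
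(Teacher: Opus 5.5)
The paper gives no proof of its own here; it only imports \cite[Proposition~2]{Atkinson2011}. Checking Andreev's conditions for $R$ is the natural frame, and your easy steps are fine. New vertices have angles $\pi/2,\pi/2,\Theta(e)$. Circuits of $R$ avoiding the caps are circuits of $P$ with the same angles. A triangular cap never lies on a prismatic circuit, since any two of its neighbours meet at a new vertex.

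\textbf{The gap.} The problem is circuits through a quadrilateral cap $F$. Such a circuit must cross two \emph{opposite} sides of $F$. Inside the component it is therefore a disc bounded by an arc across the cutting quadrilateral and an arc on $\partial P$. In $Q_P^\perp$ it doubles to a properly embedded Euclidean orbifold \emph{with boundary}: a $D^2(2,2)$ bounded by an essential curve of the cut $S^2(2,2,2,2)$, or an annulus joining two cuts if the circuit passes through two caps. It is not a closed $S^2(2,2,2,2)$ in the component, so atoroidality, which only forbids essential closed Euclidean suborbifolds, says nothing about it.

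This is not a technicality. Take the product region between two parallel cuts, i.e.\ a $4$-prism with two opposite side faces capped. Every closed Euclidean suborbifold there is boundary-parallel, yet the circuit base, cap, base, cap crosses four right-angled edges and violates condition~(5). What excludes such circuits is that the component is not Seifert fibred, i.e.\ it is hyperbolic with cusps at the cuts; your argument never uses this.

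One way to use it: hyperbolization gives a cusped right-angled realization $R^\perp$ of the component, in which each quadrilateral cap becomes a four-valent ideal vertex $v_F$.
\begin{itemize}
\item A prismatic circuit $F,G,K,H$ would make $G,H$ opposite at $v_F$ and joined through $K$ by two right angles. This contradicts condition~(7) for $R^\perp$.
\item A circuit through two caps, or two opposite neighbours of $F$ sharing an ideal vertex of $P$, would force two faces of the convex polyhedron $R^\perp$ that are opposite at $v_F$ to meet at a second point. Convexity forbids this.
\end{itemize}

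Two smaller errors. In your $3$-circuit step, the fact that $P$ satisfies Andreev's conditions does not help. A $3$-circuit through $F$ comes from a chord of the cutting circuit, i.e.\ from prismatic $3$-circuits of $P$, which are allowed. It is excluded instead because the chord gives a compressing $D^2(2)$ for the cut suborbifold. Also, the circuit running around $F$ through its four neighbours \emph{is} prismatic in $R$. It is harmless only because its angles are those of the cutting circuit of $P$.

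\textbf{The fallback.} Your fallback does not work, for two reasons.
\begin{itemize}
\item In $Q_P^\perp$ the cuts are copies of $S^2(2,2,2,2)$, with $\chi^{\mathrm{orb}}=0$. They cannot be totally geodesic in any hyperbolic structure, so hyperbolization produces cusps there, i.e.\ $R^\perp$, not $R$.
\item The angles $\Theta(e)\in(0,\pi/2]$ are arbitrary reals, so $R$ with these angles is not an orbifold; its double is only a cone-manifold. No orbifold hyperbolization theorem can produce it, and Andreev's uniqueness cannot identify a polyhedron with different angles with $R$.
\end{itemize}
Topology can supply only the combinatorial exclusions above. The realization with the prescribed angles must come from Andreev's theorem, whose remaining linear conditions are then inherited from $P$.
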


\begin{proposition}[{\cite[Proposition~3]{Atkinson2011}}]\label{prop:atkinson-volume}
In the notation of Proposition~\ref{prop:atkinson-realization}, the rehyperbolization procedure does not increase the volume of the atoroidal component:
\[
        \Vol(R)\le \Vol(Q).
\]
\end{proposition}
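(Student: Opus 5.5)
The plan is to compare volumes inside the doubled cone-manifold and reduce the statement to a Agol--Storm--Thurston type inequality for manifolds with totally geodesic boundary. Let \(N\subset Q_P\) be the double of the atoroidal piece \(Q\): a compact region of the hyperbolic cone-manifold \(Q_P\). It is bounded by the doubles of the cutting quadrilaterals, which are tori or Euclidean \(2\)-orbifolds \(S^2(2,2,2,2)\). These boundary surfaces are incompressible but in general not totally geodesic in the metric of \(P\). On the other side, double \(R\) across its old faces. The new faces of \(R\) meet the old faces at angle \(\pi/2\), so each new face doubles to a totally geodesic boundary component of a hyperbolic cone-manifold \(D(R)\). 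This \(D(R)\) is homeomorphic to \(N\) as a pair with singular locus, and has the same cone angles \(2\Theta(e)\) along the old edges. Since \(\Vol(D(R))=2\Vol(R)\) and \(\Vol(N)=2\Vol(Q)\), it suffices to show
\[
\Vol(D(R))\le \Vol(N).
\]

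For the comparison I would take the Agol--Storm--Thurston route and double once more across the new faces. Gluing two copies of \(N\) along \(\partial N\) gives a closed cone-manifold \(DN\). Its metric is hyperbolic away from the seams and, in general, only singular (Lipschitz) along \(\partial N\). Convexity of \(Q\) within \(P\) controls the sign of the mean curvature of the seams: the cutting quadrilaterals can be chosen as least-area representatives in their isotopy class, so they are minimal. Doubling \(D(R)\) in the same way gives a genuinely hyperbolic cone-manifold \(DD(R)\), since its boundary is totally geodesic. The minimal-surface choice of the seam is what makes the doubled metric on \(DN\) have scalar curvature \(\ge -6\) in the distributional sense. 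Perelman's monotonicity of \(\Vol\cdot R_{\min}^{3/2}\) under Ricci flow then gives \(\Vol(DN)\ge \Vol(DD(R))\). This is exactly the step in \cite{Atkinson2011} that replaces a direct Schl\"afli deformation.

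The main obstacle is that the Ricci-flow inequality is available for smooth manifolds, or orbifolds with angles \(2\pi/k\), whereas here the cone angles \(2\Theta(e)\) are arbitrary in \((0,\pi]\). I would handle this in two steps.
\begin{enumerate}[label=(\roman*),itemsep=0.15em,topsep=0.3em]
\item First treat the orbifold case \(\Theta(e)=\pi/k\), where the inequality is an orbifold AST statement. Pass to a finite manifold cover, which exists for hyperbolic orbifolds by Selberg, and divide by the degree.
\item Then reach general non-obtuse angles by a continuity argument. Vary the labels \(\Theta\) and realize both \(Q\) (inside the corresponding \(P\)) and \(R\) by Andreev's theorem, which applies because the combinatorics and all non-obtuse conditions persist in a neighbourhood. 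By the Schl\"afli formula, both volumes depend continuously, and indeed piecewise smoothly, on the labels.
\end{enumerate}

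The remaining difficulty in step (ii) is that the orbifold angles \(\pi/k\) are not dense. So I would instead argue directly. Apply the comparison with a Schl\"afli-type derivative identity: the difference \(\Vol(Q)-\Vol(R)\) vanishes when the angles are such that the cutting quadrilaterals are already totally geodesic. Its derivative along a path of labels is \(-\tfrac12\sum_e (l_e^Q-l_e^R)\,d\Theta(e)\). Show that this has a fixed sign by comparing edge lengths: \(l_e^Q\le l_e^R\) fails in general, so one expects to need a different path. If this cannot be closed, I would fall back on quoting the volume inequality for cone-manifolds with cone angles \(\le\pi\) as in Atkinson's own proof.
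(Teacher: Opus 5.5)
There is a genuine gap, and it sits exactly at the step you flag yourself. The doubling reduction to $\Vol(D(R))\le\Vol(N)$ is fine. The comparison you then invoke is an argument about Riemannian manifolds: a least-area representative of the cutting surface, distributional scalar curvature $\ge -6$ on the double, and Perelman's monotonicity of $\Vol\cdot R_{\min}^{3/2}$. It reaches orbifolds only through finite manifold covers. For a cone-manifold whose cone angles $2\Theta(e)$ are not of the form $2\pi/k$, there is no Ricci flow with that monotonicity to quote. Even the existence and regularity of a least-area cutting cone-surface meeting the singular locus is not available off the shelf. Your appeal to convexity for the mean curvature of the seams is also unfounded: $Q$ is bounded by non-geodesic topological quadrilaterals and is not convex. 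So only step (i) goes through, and in this paper it is vacuous. The old edges carry the angle $\alpha\in(\pi/3,\pi/2)$, an interval containing no $\pi/k$, so the double $Q_P$ is never an orbifold in the cases used.

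Neither bridge proposed in step (ii) can close this. Density fails, as you note. The Schl\"afli route is ill-posed rather than merely difficult. $\Vol(Q)$ is not a function of the labels, since the position of the cutting quadrilaterals is a choice. So there is no Schl\"afli identity for $Q$, and the expression $-\tfrac12\sum_e(l_e^Q-l_e^R)\,d\Theta(e)$ has no meaning; the Schl\"afli formula applies to $P$ and to $R$, not to a non-geodesic piece of $P$. What is left is your fallback of quoting the cone-manifold inequality from Atkinson, which is the statement itself. That is exactly what the paper does: it gives no argument of its own and takes $\Vol(R)\le\Vol(Q)$, for arbitrary non-obtuse angles, from \cite[Proposition~3]{Atkinson2011}. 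As a citation your proposal agrees with the paper, but the sketch preceding it supplies no proof for the angles actually needed.
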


\begin{theorem}[Right-angled deformation; {\cite[Section~8, pp.~207--208]{Atkinson2011}}]\label{thm:atkinson-deformation}
Suppose that \(P_\alpha\) is a compact non-tetrahedral equiangular polyhedron, with
\(\pi/3<\alpha<\pi/2\), and that \(R\) is an atoroidal polyhedron obtained by
the rehyperbolization of one of its atoroidal components.  Then there exists
an angle-nondecreasing deformation, with possible face degenerations, taking
\(R\) to a finite-volume right-angled hyperbolic polyhedron \(R^\perp\).  The
volume does not increase:
\[
        \Vol(R)\ge \Vol(R^\perp).
\]
\end{theorem}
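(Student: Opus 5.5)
The plan is to follow Atkinson's deformation argument. Write \(E_{\rm old}\) for the edges of \(R\) inherited from \(Q\), which carry angle \(\alpha\). Write \(E_{\rm new}\) for the edges of the added triangular or quadrilateral faces, which carry angle \(\pi/2\) by Proposition~\ref{prop:atkinson-realization}. For \(t\in[0,1]\) consider the linear family of labels
\[
        \Theta_t(e)=(1-t)\alpha+t\frac{\pi}{2}\quad (e\in E_{\rm old}),\qquad
        \Theta_t(e)=\frac{\pi}{2}\quad (e\in E_{\rm new}).
\]
Every label is nondecreasing in \(t\) and stays in \((0,\pi/2]\). The first step is to check the conditions of Theorem~\ref{thm:andreev} for \((R,\Theta_t)\) when \(0\le t<1\).

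Conditions (1)--(3) are local. All vertices of \(R\) are trivalent, and every vertex sum exceeds \(3\alpha>\pi\) or involves a \(\pi/2\) label, so the realization is compact for \(t<1\). For (4), I would use the atoroidal structure. A prismatic \(3\)-circuit in \(R\) would give a spherical triangle suborbifold in \(Q_P^\perp\). The irreducibility coming from the reduced decomposition excludes this, just as in the right-angled setting. For (5), every label is at most \(\pi/2\) and old labels are strictly below \(\pi/2\) when \(t<1\). Hence a prismatic \(4\)-circuit can only have sum \(2\pi\) if it consists entirely of new edges. I expect atoroidality to exclude such circuits, since the circuits around the added faces cross old edges. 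Conditions (6) and (7) do not arise, because \(R\) has no ideal vertices for \(t<1\) and is not a triangular prism when non-tetrahedral and atoroidal. Andreev's theorem then gives a smooth family \(R_t\) of compact polyhedra with fixed combinatorics. Here \(R_0=R\) by uniqueness.

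By Theorem~\ref{thm:schlafli}, on \([0,1)\) we have
\[
        \frac{d}{dt}\Vol(R_t)=-\frac12\sum_{e\in E_{\rm old}}l_e\Bigl(\frac{\pi}{2}-\alpha\Bigr)\le0 .
\]
So \(\Vol(R_t)\) is nonincreasing and \(\Vol(R)\ge\lim_{t\to1}\Vol(R_t)\).

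The main obstacle is the endpoint \(t=1\). There, prismatic \(4\)-circuits through old edges reach sum \(2\pi\), and trivalent right-angled vertices may cease to be the correct combinatorics. Faces bounded by such circuits degenerate: a face all of whose adjacent circuits become critical collapses to an ideal four-valent vertex. I would first identify the limiting combinatorics \(R^\perp\) by contracting these faces. Next I would check that \((R^\perp,\pi/2)\) satisfies Theorem~\ref{thm:andreev}, which uses atoroidality once more to exclude bad \(4\)-circuits after contraction. Then I would prove geometric convergence \(R_t\to R^\perp\), either through convergence of the normalized outward normals in the Gram description or through Hodgson--Rivin polar metrics in Theorem~\ref{thm:rhs}. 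Finally, I would show continuity of volume in this limit. The idea is that volume is continuous under Hausdorff convergence of convex polyhedra whose vertices converge to finite or ideal points, with a uniform cusp-neighbourhood bound near the new ideal vertices. This yields \(\Vol(R^\perp)=\lim_{t\to1}\Vol(R_t)\le\Vol(R)\).
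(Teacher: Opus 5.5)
Your proposal follows the same route as the paper. The paper gives no independent proof here: it imports Atkinson's componentwise deformation of the rehyperbolized component to right angles, in which quadrilateral faces (old or cutting) degenerate to ideal four-valent vertices and the Schl\"afli formula gives monotonicity \cite[Sections~4.2, 5 and~8]{Atkinson2011}. The endpoint steps you leave only as a plan (the combinatorics of $R^\perp$, geometric convergence, volume continuity) are exactly what the paper takes from Atkinson's Corollary~6 and the subsequent volume-continuity argument, so they should be cited there; your checks of conditions (4)--(5) for $t<1$ also need no atoroidality, since they follow from all labels exceeding $\pi/3$ and from the realizability of $R$ at $t=0$ (Proposition~\ref{prop:atkinson-realization}), which already rules out a prismatic $4$-circuit crossing only new edges.
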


We use the componentwise deformation described in
\cite[Section~8]{Atkinson2011}, following the rehyperbolization of
Section~4.2 and the face-degeneration construction of Section~5
(Corollary~6 and the subsequent volume-continuity argument).

\begin{theorem}[{\cite[Theorem~10]{Atkinson2011}}]\label{thm:atkinson-ra-bound}
Let \(Q\) be a finite-volume right-angled hyperbolic polyhedron with \(N_\infty\) ideal and \(N_F\) finite vertices. Then
\[
        \Vol(Q)\ge
        \frac{4N_\infty+N_F-8}{32}\,\voct,
\]
where \(\voct\) is the volume of the ideal regular octahedron.
\end{theorem}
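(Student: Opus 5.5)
The bound will come from Miyamoto's volume inequality for hyperbolic orbifolds with totally geodesic boundary, applied to several colour classes of faces and then averaged. The weights in the statement are exactly what an Euler characteristic count over all faces produces.

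\emph{Combinatorial identity.} Regard \(P=Q\) as a reflection orbifold. A face \(F\) with \(k\) finite right-angled corners and \(m\) ideal vertices, viewed as a 2-orbifold whose sides are mirrors, has
\[
        -\chi(F)=\frac m2+\frac k4-1 .
\]
This quantity is positive, since it is \(\Area(F)/2\pi\) by Gauss--Bonnet for a right-angled hyperbolic polygon. Ideal vertices of a right-angled polyhedron are four-valent and finite vertices are trivalent. Hence \(\sum_F m=4N_\infty\), \(\sum_F k=3N_F\), and \(E=(4N_\infty+3N_F)/2\). Euler's formula then gives \(\#\{\text{faces}\}=2+N_\infty+N_F/2\), and therefore
\[
        \sum_F\bigl(-\chi(F)\bigr)=N_\infty+\frac{N_F}{4}-2=\frac{4N_\infty+N_F-8}{4}.
\]
So it suffices to prove \(\Vol(Q)\ge\frac{\voct}{8}\sum_F(-\chi(F))\).

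\emph{Colouring and Miyamoto.} Colour the faces of \(Q\) with four colours so that adjacent faces get different colours, using the four colour theorem on the dual planar graph. Fix one colour class \(\mathcal C\). Declare the faces of \(\mathcal C\) to be boundary and all other faces to be mirrors. Faces in \(\mathcal C\) share no edge, and every face of \(\mathcal C\) meets the mirrors at right angles. The resulting orbifold \(\mathcal O_{\mathcal C}\) therefore has totally geodesic boundary \(\bigsqcup_{F\in\mathcal C}F\), and \(\Vol(\mathcal O_{\mathcal C})=\Vol(Q)\). I would pass to a finite manifold cover (Selberg) and apply Miyamoto's theorem, \(\Vol\ge\frac{\voct}{2}\,|\chi(\partial)|\), with the cusped version where needed. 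Dividing by the degree of the cover gives
\[
        \Vol(Q)\ge\frac{\voct}{2}\sum_{F\in\mathcal C}\bigl(-\chi(F)\bigr).
\]
Summing over the four classes yields \(4\Vol(Q)\ge\frac{\voct}{2}\sum_F(-\chi(F))\), which is the claim after the identity above.

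\emph{Main obstacle.} Two faces of the same colour may share an ideal vertex without sharing an edge. At that cusp two boundary components of \(\mathcal O_{\mathcal C}\) run into the same cusp. This is legitimate for a finite-volume orbifold with geodesic boundary and cusps, but I would check carefully that Miyamoto's collar-packing argument still applies there. I would verify that the embedded collars of radius \(\tfrac12\log(\ldots)\) about distinct boundary faces stay disjoint near such a cusp, using the explicit horoball picture in which the two faces are vertical half-planes meeting the cusp torus in disjoint parallel lines. The remaining checks are routine: that the finite cover exists for the reflection orbifold, and that orbifold Euler characteristics are multiplicative under covers.
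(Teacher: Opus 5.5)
The paper gives no proof of this statement; it quotes it from Atkinson. Your argument is correct and, as far as I can tell, is essentially the proof behind the cited theorem: four-colour the faces, make one colour class totally geodesic boundary, pass to a Selberg manifold cover, apply Miyamoto's inequality, average over the four classes, and use the Euler count $\sum_F(-\chi(F))=(4N_\infty+N_F-8)/4$. The cusp issue you flag needs no extra work, because Miyamoto's theorem is stated for finite-volume manifolds with totally geodesic boundary, and its own equality cases (decompositions into regular ideal octahedra) have exactly such annular cusps where boundary components meet.
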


\begin{theorem}[Graph-type structure; {\cite[Sections~4.2 and 6.4]{Atkinson2011};
\cite[Section~3, proof of Lemma~3.5]{AtkinsonRafalski}}]\label{thm:graph-type-structure}
Let \(P\) be a non-obtuse polyhedron of graph type.  The Seifert-fibered
components of a reduced geometric decomposition of \(Q_P^\perp\) have
singular loci of prism type.  In the quotient they give topological prism
blocks glued in pairs along quadrilateral cutting faces.  Associate one vertex
to every such prism component and one edge to every gluing quadrilateral.  For
a polyhedral orbifold the resulting connected gluing graph \(G(P)\) is a tree.
If \(G(P)\) has one vertex, \(P\) has the combinatorial type of an ordinary
\(n\)-prism.  If it has more than one vertex, it has at least two leaves; every
leaf in the reduced decomposition has prism degree \(n\ge5\), and exactly
\(n-3\) of its quadrilateral side faces remain free in the boundary of \(P\),
where they form a linear chain.
\end{theorem}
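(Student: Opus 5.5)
The plan is to argue entirely at the level of the orbifold \(Q_P^\perp\) and to push the conclusions down to \(P\) through the reflection quotient. First I will recall what the Bonahon--Siebenmann pieces of a right-angled reflection orbifold with underlying space \(S^3\) can be. Every cutting suborbifold is of type \(S^2(2,2,2,2)\) and so corresponds to a prismatic \(4\)-circuit of \(P\). A Seifert-fibered piece is therefore a reflection orbifold over a \(3\)-ball whose boundary suborbifolds are Euclidean. Since the fibres must be compatible with the mirrors, I will use the classification of Seifert-fibered reflection orbifolds: the fibration is by intervals joining two mirror faces, called the bases, and every other mirror face is a union of fibres. This gives exactly a prism block, an \(n\)-gon times an interval, with \(n\) lateral quadrilaterals. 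Some of these lateral quadrilaterals are faces of \(P\); the others are cutting quadrilaterals. This is the statement that the singular loci are of prism type.

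Next I will build \(G(P)\), with one vertex for each prism block and one edge for each gluing quadrilateral. It is connected because \(P\) is connected. To see that it is a tree, note that each cutting quadrilateral is a disc in the ball \(P\) whose boundary is a simple closed curve on \(\partial P\cong S^2\). By the Jordan curve theorem and the Schoenflies theorem, this disc separates \(P\). Hence deleting any edge disconnects \(G(P)\), so there are no cycles. Since \(P\) is of graph type, there are no atoroidal pieces, so every vertex of \(G(P)\) is a prism block. A single vertex with no gluings means that all lateral faces are faces of \(P\), so \(P\) is combinatorially an ordinary \(n\)-prism. If \(G(P)\) has more than one vertex, then, being a finite tree, it has at least two leaves.

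The remaining step, which I expect to be the main obstacle, is the local structure at a leaf. A leaf block has exactly one cutting quadrilateral. I will show that this quadrilateral sits as a lateral face of the block. The three lateral faces adjacent to the gluing in \(P\) (the cutting face and its two neighbours, whose edges along the cut are absorbed into faces of the neighbouring block) are then no longer free faces of \(P\). This leaves \(n-3\) consecutive lateral faces, and these form the free linear chain. I will check this count carefully against the two base faces, which continue across the cut.

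For the bound \(n\ge5\), I will use reducedness together with Andreev's conditions. If \(n=3\), a prismatic \(3\)-circuit appears, which is excluded for non-obtuse angles in the relevant setting; alternatively the block is not a genuine piece. If \(n=4\), the leaf is a thin product region whose fibration matches the neighbour's across a layer-preserving gluing, so the amalgamation rule in \cite[Section~6.4]{Atkinson2011} would have absorbed it into the neighbour. The delicate point is making this amalgamation argument precise in the \(n=4\) case, and for that I would follow the proof of \cite[Lemma~3.5]{AtkinsonRafalski}.
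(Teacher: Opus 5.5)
Your outline follows the route the paper indicates. The paper does not reprove this statement. It cites Atkinson for the prism structure of the Seifert pieces and Atkinson--Rafalski for the bound on leaves, and supplies itself exactly your elementary steps:
\begin{itemize}
\item the tree property, because each cutting disc separates the ball (this follows from simple connectivity, not from Schoenflies);
\item the count \(n-3\), obtained by deleting three consecutive side positions (see \eqref{eq:leaf-free-chain});
\item the absorption of a leaf \(4\)-prism.
\end{itemize}
Your \(n=4\) conclusion is right, but ``thin product region'' is not the mechanism, since a product collar would have two boundary components. The actual point is this. A \(4\)-prism leaf carries two product structures in which the attaching face is lateral, and one of them is layer-preserving with respect to the neighbour. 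So the cutting quadrilateral is not canonical, and the leaf is amalgamated. Given the amalgamation rule, this essentially settles \(n=4\).

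The step that fails as written is \(n=3\). Prismatic \(3\)-circuits are not excluded by non-obtuseness. Condition \textup{(4)} of Theorem~\ref{thm:andreev} only requires the three angles to sum to less than \(\pi\), which non-obtuse labels such as \(\pi/4,\pi/4,\pi/4\) satisfy. You also do not identify the circuit. The leaf's own equatorial circuit passes through the cutting quadrilateral, which is suppressed and is not a face of \(P\).

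A repair needs two ingredients. First, under a crossed gluing, each triangular base of the leaf merges with the lateral neighbour of the attaching face in the adjacent block. The result is a face of \(P\) with \(3+4-4=3\) sides, and a triangular face of a non-tetrahedral trivalent polyhedron is encircled by a prismatic \(3\)-circuit. Second, you need an input that forbids such circuits. One option is irreducibility of \(Q_P^\perp\), since a prismatic \(3\)-circuit yields an essential \(S^2(2,2,2)\). In the range the paper actually uses, Andreev's theorem with \(3\alpha>\pi\) suffices. With these supplied, your direct argument would replace the appeal to the cited bound that the paper makes, in its remark after the theorem, to exclude triangular leaves.
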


For an equiangular \(P\) with \(\pi/3<\alpha<\pi/2\), every prism block
here has degree at least four. Indeed, the cutting
quadrilaterals are mutually disjoint, so the attaching side faces of a fixed
prism block are pairwise nonadjacent in their cyclic order.  A triangular
prism block therefore has valence at most one in \(G(P)\).  It cannot be the
only block, since then \(P\) itself would be a triangular prism and its
equatorial prismatic \(3\)-circuit would contradict Andreev's inequality
\(3\alpha<\pi\).  Nor can it be a leaf of a nontrivial reduced tree, since
the theorem gives \(n\ge5\) for every such leaf.  Thus \(n\ge4\) for every
block; the stronger lower bound \(n\ge5\) concerns leaves.

The blocks in this statement are components after cutting along internal
quadrilaterals.  They are not asserted to be convex hyperbolic subpolyhedra of
\(P\).  Indeed, Atkinson's prism regions can have non-geodesic cutting faces
and \emph{virtual edges}; see \cite[Section~6.4]{Atkinson2011}.  What is used
below is their topological prism product structure and the actual free
quadrilateral faces and seams which remain on the boundary of \(P\).

\subsection{The finite-volume right-angled polyhedron of minimal volume}

Let \(P(3,2)\) denote the triangular bipyramid with three ideal and two finite vertices; see Figure~\ref{fig:p32}. It was proved in \cite{EgorovP32} that it has minimal volume among all finite-volume right-angled hyperbolic polyhedra.

\begin{figure}[ht]
\begin{center}
\begin{tikzpicture}[scale=1]
    \coordinate (A) at (0,0,2);
    \coordinate (B) at (-2,0,0);
    \coordinate (C) at (2,0,0);
    \coordinate (V1) at (0,2.5,0.5);
    \coordinate (V2) at (0,-2.5,0.5);
    \coordinate (AB) at (-1,0,1);
    \coordinate (BC) at (0.25,0,0);
    \coordinate (CA) at (1,0,1);
    \coordinate (O) at (0,0,0.5);
    \draw[very thick, black] (A) -- (B);
    \draw[very thick, black, dashed] (C) -- (B);
    \draw[very thick, black] (C) -- (A);
    \draw[very thick, black] (A) -- (V1);
    \draw[very thick, black] (B) -- (V1);
    \draw[very thick, black] (C) -- (V1);
    \draw[very thick, black] (A) -- (V2);
    \draw[very thick, black] (B) -- (V2);
    \draw[very thick, black] (C) -- (V2);
    \draw[thick, gray] (AB) -- (V1);
    \draw[thick, gray, dotted] (BC) -- (V1);
    \draw[thick, gray] (CA) -- (V1);
    \draw[thick, gray] (AB) -- (V2);
    \draw[thick, gray, dotted] (BC) -- (V2);
    \draw[thick, gray] (CA) -- (V2);
    \draw[thick, gray, dashed] (V1) -- (V2);
    \draw[thick, gray, dotted] (A) -- (O);
    \draw[thick, gray, dotted] (B) -- (O);
    \draw[thick, gray, dotted] (C) -- (O);
    \draw[thick, gray, dotted] (AB) -- (O);
    \draw[thick, gray, dotted] (BC) -- (O);
    \draw[thick, gray, dotted] (CA) -- (O);
    \foreach \v in {V1,V2} {\fill[black] (\v) circle (2pt);}
    \foreach \v in {A,B,C} {
        \fill[white] (\v) circle (4pt);
        \draw[red] (\v) circle (4pt);
        \fill[red] (\v) circle (2pt);
    }
    \node[] at (-0.2,-3) {$(a)$};
\end{tikzpicture}
\qquad
\begin{tikzpicture}[scale=1.2]
    \coordinate (A) at (0,0);
    \coordinate (B) at (3,0);
    \coordinate (C) at (1.5,2.5);
    \coordinate (D) at (1.5,0.75);
    \coordinate (E) at (1.5,1.5);
    \draw[very thick, black] (A) -- (B) -- (C) -- cycle;
    \draw[very thick, black] (A) -- (D) -- (B) -- (E) -- cycle;
    \draw[very thick, black] (C) -- (D);
    \draw[very thick, black] (D) -- (E);
    \foreach \v in {C,D} {\fill[black] (\v) circle (2pt);}
    \foreach \v in {A,B,E} {
        \fill[white] (\v) circle (4pt);
        \draw[red] (\v) circle (4pt);
        \fill[red] (\v) circle (2pt);
    }
    \node[] at (1.47,-1.5) {$(b)$};
\end{tikzpicture}
\end{center}
\caption{The triangular bipyramid $P(3,2)$ and its Schlegel diagram. Ideal
vertices are marked by open circles.}
\label{fig:p32}
\end{figure}

Moreover,
\[
        \Vol(P(3,2))=2\Lambda\left(\frac{\pi}{4}\right).
\]
This number is Catalan's constant
\[
        G=\sum_{k=0}^{\infty}\frac{(-1)^k}{(2k+1)^2}=0.915965\ldots .
\]
Since
\[
        \voct=8\Lambda\left(\frac{\pi}{4}\right),
\]
we have
\begin{equation}\label{eq:p32-volume}
        \Vol(P(3,2))=G=\frac{\voct}{4}.
\end{equation}
\section{The atoroidal case}

In this section we estimate polyhedra for which Atkinson's decomposition has an atoroidal component. We first record a small gap in the volumes of finite-volume right-angled polyhedra.

\begin{lemma}[Consequence of the right-angled minimum theorem]
\label{lem:ra-gap}
Let \(Q\) be a finite-volume right-angled hyperbolic polyhedron. If \(Q\) is not isometric to \(P(3,2)\), then
\[
        \Vol(Q)>\vtet.
\]
\end{lemma}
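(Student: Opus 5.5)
The plan is to combine Atkinson's lower bound (Theorem~\ref{thm:atkinson-ra-bound}) with the vertex-count structure of right-angled polyhedra, and to handle the few small cases by hand. Recall \(\vtet=1.01494\ldots\) and \(\voct=3.66386\ldots\), so \(\vtet/\voct\approx 0.277\). Theorem~\ref{thm:atkinson-ra-bound} gives
\[
\Vol(Q)\ge \frac{4N_\infty+N_F-8}{32}\,\voct .
\]
This exceeds \(\vtet\) as soon as \(4N_\infty+N_F-8\ge 9\), i.e.\ \(4N_\infty+N_F\ge 17\), since \(9\voct/32\approx1.0306>\vtet\). So the whole argument reduces to the finitely many combinatorial types with \(4N_\infty+N_F\le 16\).

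\emph{Compact case.} If \(N_\infty=0\), then \(Q\) is a compact right-angled polyhedron. By Andreev's theorem it has no triangular or quadrilateral faces, so it has at least \(20\) vertices; the dodecahedron is the minimal case. Hence \(N_F\ge 20\) and the bound applies. In general, trivalent finite vertices and four-valent ideal vertices, together with the prohibition of prismatic \(3\)-circuits and the restrictions on faces near finite vertices, give lower bounds of this type. I would carry out the small cases by splitting according to \(N_\infty\in\{1,2,3,4\}\):
\begin{itemize}
\item \(N_\infty=4\) forces \(N_F=0\), and the only candidate is the ideal octahedron, of volume \(\voct>\vtet\).
\item \(N_\infty=3\) forces \(N_F\le 4\).
\item \(N_\infty=2\) forces \(N_F\le 8\).
\item \(N_\infty=1\) forces \(N_F\le 12\).
\end{itemize}

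\emph{Enumeration.} The main obstacle is enumerating the right-angled finite-volume polyhedra in this window. The natural tool is the classification underlying the proof that \(P(3,2)\) is minimal in \cite{EgorovP32}. That argument presumably proceeds exactly by such a small-case enumeration, listing the realizable polyhedra with few vertices: \(P(3,2)\), the ideal octahedron, the polyhedra obtained from \(P(3,2)\) by truncating ideal vertices, and so on. I would reuse that list and compute or bound each candidate's volume other than \(P(3,2)\).

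\emph{Volume estimates.} For candidates obtained from \(P(3,2)\) by replacing ideal vertices with quadrilateral faces, I would not compute directly. Instead I would decompose into orthoschemes and apply Kellerhals' formula \eqref{eq:kellerhals}, or use the known volumes of small right-angled polyhedra, e.g.\ \(4\cdot\)Catalan-type quantities. Since \(G=0.9159\) is only about \(0.1\) below \(\vtet\), I expect every other candidate to exceed \(\vtet\) by a visible margin. The most delicate case is the next smallest right-angled polyhedron after \(P(3,2)\), whose volume must be checked numerically to exceed \(1.0149\). If direct enumeration becomes unwieldy, the fallback is a sharper argument: the uniqueness of the minimizer in \cite{EgorovP32}, combined with the finiteness of the small-vertex list, yields a positive gap. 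A strict lower bound on that gap would then come from an explicit volume computation of the second-smallest candidate.
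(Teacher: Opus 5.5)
Your plan is essentially the paper's proof: Atkinson's bound (Theorem~\ref{thm:atkinson-ra-bound}) disposes of every case with $w=4N_\infty+N_F-8\ge 9$, and the window $w\le 8$ is handled by quoting the small-case analysis of \cite[Lemmas~3.1--3.4]{EgorovP32}. The paper instead notes that $3N_F+4N_\infty=2E$ makes $w$ even and uses only $w\ge 10$; your check $9\voct/32>\vtet$ makes that parity remark unnecessary.

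Your guesses about the small-case list are wrong and should be corrected, although they do not affect the logic. The ideal octahedron has six ideal vertices, so $(N_\infty,N_F)=(4,0)$ is not realizable at all. Truncating an ideal vertex of $P(3,2)$ creates a prismatic $4$-circuit with angle sum $2\pi$, so those polyhedra are not right-angled. The only realizable cases in the window besides $P(3,2)$ are $P(2,8)$, of volume $2\Gcat$, and $P(3,4)$, of volume $1.505\ldots$, so no delicate numerical comparison arises.
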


\begin{proof}
Put
\[
        w=4N_\infty+N_F-8.
\]
At a finite vertex of a finite-volume right-angled polyhedron three edges
meet, whereas an ideal vertex is four-valent.  Hence
\[
        3N_F+4N_\infty=2E.
\]
In particular, \(N_F\), and therefore \(w\), is even.  Atkinson's
bound \cite[Theorem~10]{Atkinson2011}, stated above as
Theorem~\ref{thm:atkinson-ra-bound}, gives
\[
        \Vol(Q)\ge
        \frac{w}{32}\,\voct .
\]
If \(w\ge10\), then
\[
        \Vol(Q)\ge \frac{10}{32}\voct
        =1.144956\ldots>1.014941\ldots=\vtet .
\]
There is no intermediate case \(w=9\): if \(w<10\), then \(w\le8\).

The cases with \(w\le8\) are exactly the small cases analyzed in
\cite[Lemmas~3.1--3.4]{EgorovP32}.  The elementary exclusions there first
rule out \(N_\infty=0,1\) and the pairs with too few total vertices.  The
remaining pairs are
\[
 (2,4),\ (2,6),\ (2,8),\ (3,2),\ (3,4).
\]
The first two are impossible, \((3,2)\) gives exactly \(P(3,2)\), and the
other two realizable cases are \(P(2,8)\), of volume \(2\Gcat\), and
\(P(3,4)\), of volume \(1.505361\ldots\).  Both volumes are larger than
\(\vtet\).  Therefore the only finite-volume right-angled polyhedron whose
volume does not exceed \(\vtet\) is \(P(3,2)\).
\end{proof}

\begin{lemma}[Estimate in the atoroidal case]\label{lem:atoroidal-general}
Let \(P_\alpha\) be an equiangular hyperbolic polyhedron with
\[
        \frac{\pi}{3}<\alpha<\arccos\frac13 .
\]
Assume that Atkinson's decomposition has an atoroidal component whose right-angled limit is not isometric to \(P(3,2)\). Then
\[
        \Vol(P_\alpha)>\Vol(T_\alpha).
\]
\end{lemma}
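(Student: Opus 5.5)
The plan is a chain of inequalities along Atkinson's decomposition, ending with the gap of Lemma~\ref{lem:ra-gap}. Since \(\pi/3<\alpha<\arccos(1/3)<\pi/2\), Andreev's theorem shows that \(P_\alpha\) is compact with trivalent finite vertices. It is not a tetrahedron: if it were, it would be \(T_\alpha\), whose Atkinson decomposition has no prismatic \(4\)-circuit and whose only atoroidal component is \(T_\alpha\) itself, with right-angled limit ideal-degenerate rather than a genuine non-\(P(3,2)\) polyhedron. I would dispose of this case first, either by noting that the statement is vacuous or by treating \(P_\alpha=T_\alpha\) separately if the convention demands it. So assume \(P_\alpha\) is non-tetrahedral.

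Let \(Q\) be the given atoroidal component, viewed as a region of \(P_\alpha\). Then \(\Vol(P_\alpha)\ge\Vol(Q)\), since the components of the decomposition are glued along quadrilaterals and have nonnegative volume; this is how Atkinson uses the decomposition. Let \(R\) be its rehyperbolization. Proposition~\ref{prop:atkinson-realization} shows that \(R\) exists, and Proposition~\ref{prop:atkinson-volume} gives \(\Vol(R)\le\Vol(Q)\). Next, Theorem~\ref{thm:atkinson-deformation} supplies an angle-nondecreasing deformation of \(R\) to a finite-volume right-angled polyhedron \(R^\perp\) with \(\Vol(R)\ge\Vol(R^\perp)\). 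By hypothesis \(R^\perp\) is not isometric to \(P(3,2)\), so Lemma~\ref{lem:ra-gap} gives \(\Vol(R^\perp)>\vtet\). Finally, \(\Vol(T_\alpha)<\vtet\) for \(\alpha>\pi/3\), because by \eqref{eq:tet-volume} \(\frac{d}{d\alpha}\Vol(T_\alpha)=-3l(\alpha)<0\) on the open range and \(\Vol(T_\alpha)\to\vtet\) as \(\alpha\to\pi/3\). Combining,
\[
\Vol(P_\alpha)\ge\Vol(Q)\ge\Vol(R)\ge\Vol(R^\perp)>\vtet>\Vol(T_\alpha).
\]

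The main obstacle is justifying the first inequality, \(\Vol(P_\alpha)\ge\Vol(Q)\). The blocks of the decomposition are topological and need not be convex hyperbolic subpolyhedra. I would therefore take Atkinson's combined statement, namely that the volume of \(P\) bounds the sum of the rehyperbolized atoroidal volumes (his Section~4 argument via the double \(Q_P\) and Agol--Storm--Thurston type monotonicity), instead of splitting the chain into separate steps. This yields \(\Vol(P_\alpha)\ge\Vol(R)\) directly. A secondary point is that \(R^\perp\) is a genuine finite-volume right-angled polyhedron after the face degenerations allowed in Theorem~\ref{thm:atkinson-deformation}, so that Lemma~\ref{lem:ra-gap} applies; I would take this from the theorem as stated.
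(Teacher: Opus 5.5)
Your proposal is correct and is essentially the paper's proof: the same chain $\Vol(P_\alpha)\ge\Vol(Q)\ge\Vol(R)\ge\Vol(R^\perp)>\vtet>\Vol(T_\alpha)$, obtained from Propositions~\ref{prop:atkinson-realization} and~\ref{prop:atkinson-volume}, Theorem~\ref{thm:atkinson-deformation}, and Lemma~\ref{lem:ra-gap}. Your side remarks only make explicit points the paper leaves implicit: excluding the tetrahedron, which the paper does in the proof of Theorem~\ref{thm:main} before invoking this lemma, and justifying $\Vol(T_\alpha)<\vtet$ via Schl\"afli monotonicity.
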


\begin{proof}
Let \(Q\) be such an atoroidal component, let \(R\) be its
rehyperbolization, and let \(R^\perp\) be the right-angled limit supplied by
Theorem~\ref{thm:atkinson-deformation}.  Propositions~\ref{prop:atkinson-realization}
and~\ref{prop:atkinson-volume} give
\[
        \Vol(P_\alpha)\ge \Vol(Q)\ge \Vol(R)\ge \Vol(R^\perp).
\]
By assumption, \(R^\perp\) is not isometric to \(P(3,2)\). Hence, by Lemma~\ref{lem:ra-gap},
\[
        \Vol(R^\perp)>\vtet.
\]
Since \(\Vol(T_\alpha)<\vtet\) for all \(\alpha>\pi/3\), we obtain
\[
        \Vol(P_\alpha)>\Vol(T_\alpha).
\]
\end{proof}

\section{The exceptional limit \texorpdfstring{\(P(3,2)\)}{P(3,2)}}

Let \(Q\) be an atoroidal component whose right-angled limit is \(P(3,2)\),
and let \(R\) be its rehyperbolization from
Proposition~\ref{prop:atkinson-realization}. The polyhedron \(R\) is compact
and trivalent, with all angles at least \(\alpha>\pi/3\), so it has no
prismatic \(3\)-circuit. It is not a tetrahedron: either it has a
quadrilateral cutting face or it is the original non-tetrahedral polyhedron.
Thus it has no triangular faces, since in its dual triangulation the
neighbours of a degree-three vertex would form a nonfacial \(3\)-cycle
unless the triangulation were tetrahedral.

In Atkinson's face-degeneration construction
\cite[Sections~5 and~8]{Atkinson2011}, quadrilateral faces are replaced by
four-valent ideal vertices, while the remaining vertices are finite.
Reversing the three such degenerations in \(P(3,2)\) gives the combinatorics
of \(R\) in Figure~\ref{fig:p32-opened}, with three distinguished
quadrilateral faces \(F_1,F_2,F_3\). A face \(F_i\) may be either an old
face or a cutting face. Every introduced edge lies on one of these faces;
all edges outside their boundaries have angle \(\alpha\).

Increase to \(\pi/2\) the angles on
\(\partial F_1\cup\partial F_2\cup\partial F_3\), keeping all other
angles equal to \(\alpha\), and denote the resulting realization by
\(\widehat B_\alpha\). To verify existence along this deformation, note
that every prismatic \(4\)-circuit is parallel to an \(F_i\) and crosses
the four unchanged edges leaving it, with angle sum \(4\alpha<2\pi\).
Vertex sums remain greater than \(\pi\); there are no prismatic
\(3\)-circuits, ideal vertices, or triangular-prism exception. Andreev's
theorem therefore gives compact realizations throughout the deformation.
The Schl\"afli formula and Proposition~\ref{prop:atkinson-volume} yield
\begin{equation}\label{eq:component-to-Bhat}
        \Vol(Q)\ge \Vol(R)\ge \Vol(\widehat B_\alpha).
\end{equation}
Now all three distinguished quadrilaterals have the same labels.  The
combinatorial symmetries preserving the two classes of labelled edges are
realized by isometries, by uniqueness in Andreev's theorem; in particular
\(\widehat B_\alpha\) has the order-\(12\) symmetry used below.

\begin{figure}[ht]
\centering
\begin{tikzpicture}[scale=0.72, line join=round, line cap=round,
    v/.style={circle, fill=black, inner sep=1.6pt},
    old/.style={very thick, black},
    cut/.style={very thick, red},
    every node/.style={font=\small}]
   	\draw[draw=black, thin, solid] (0.00,3.00) circle (0.1);
   \draw[draw=black, thin, solid] (0.00,2.00) circle (0.1);
   \draw[draw=black, thin, solid] (0.00,0.00) circle (0.1);
   \draw[draw=black, thin, solid] (-1.00,1.00) circle (0.1);
   \draw[draw=black, thin, solid] (1.00,1.00) circle (0.1);
   \draw[draw=black, thin, solid] (0.00,-1.00) circle (0.1);
   \draw[draw=black, thin, solid] (0.00,3.00) -- (0.00,2.00);
   \draw[draw=red!80!black, thick] (0.00,2.00) -- (-1.00,1.00);
   \draw[draw=red!80!black, thick] (-1.00,1.00) -- (0.00,0.00);
   \draw[draw=red!80!black, thick] (0.00,0.00) -- (1.00,1.00);
   \draw[draw=red!80!black, thick] (1.00,1.00) -- (0.00,2.00);
   \draw[draw=black, thin, solid] (0.00,0.00) -- (0.00,-1.00);
   \draw[draw=black, thin, solid] (-3.00,-2.00) circle (0.1);
   \draw[draw=black, thin, solid] (-4.00,-1.00) circle (0.1);
   \draw[draw=black, thin, solid] (-6.00,-2.00) circle (0.1);
   \draw[draw=black, thin, solid] (-4.50,-3.50) circle (0.1);
   \draw[draw=black, thin, solid] (-6.00,-2.00) -- (0.00,3.00);
   \draw[draw=red!80!black, thick] (-6.00,-2.00) -- (-4.00,-1.00);
   \draw[draw=red!80!black, thick] (-4.00,-1.00) -- (-3.00,-2.00);
   \draw[draw=red!80!black, thick] (-3.00,-2.00) -- (-4.50,-3.50);
   \draw[draw=red!80!black, thick] (-4.50,-3.50) -- (-6.00,-2.00);
   \draw[draw=black, thin, solid] (3.00,-2.00) circle (0.1);
   \draw[draw=black, thin, solid] (4.00,-1.00) circle (0.1);
   \draw[draw=black, thin, solid] (6.00,-2.00) circle (0.1);
   \draw[draw=black, thin, solid] (4.50,-3.50) circle (0.1);
   \draw[draw=red!80!black, thick] (3.00,-2.00) -- (4.50,-3.50);
   \draw[draw=red!80!black, thick] (4.50,-3.50) -- (6.00,-2.00);
   \draw[draw=red!80!black, thick] (6.00,-2.00) -- (4.00,-1.00);
   \draw[draw=red!80!black, thick] (4.00,-1.00) -- (3.00,-2.00);
   \draw[draw=black, thin, solid] (-3.00,-2.00) -- (0.00,-1.00);
   \draw[draw=black, thin, solid] (-4.00,-1.00) -- (-1.00,1.00);
   \draw[draw=black, thin, solid] (4.00,-1.00) -- (1.00,1.00);
   \draw[draw=black, thin, solid] (3.00,-2.00) -- (0.00,-1.00);
   \draw[draw=black, thin, solid] (-4.50,-3.50) -- (4.50,-3.50);
   \draw[draw=black, thin, solid] (6.00,-2.00) -- (0.00,3.00);
   \draw[draw=red!80!black, ultra thick] (-6.00,-2.00) -- (-4.00,-1.00);
   \draw[draw=red!80!black, ultra thick] (-4.00,-1.00) -- (-3.00,-2.00);
   \draw[draw=red!80!black, ultra thick] (-3.00,-2.00) -- (-4.50,-3.50);
   \draw[draw=red!80!black, ultra thick] (-4.50,-3.50) -- (-6.00,-2.00);
   \draw[draw=red!80!black, ultra thick] (-1.00,1.00) -- (0.00,2.00);
   \draw[draw=red!80!black, ultra thick] (0.00,2.00) -- (1.00,1.00);
   \draw[draw=red!80!black, ultra thick] (1.00,1.00) -- (0.00,0.00);
   \draw[draw=red!80!black, ultra thick] (0.00,0.00) -- (-1.00,1.00);
   \draw[draw=red!80!black, ultra thick] (4.00,-1.00) -- (3.00,-2.00);
   \draw[draw=red!80!black, ultra thick] (3.00,-2.00) -- (4.50,-3.50);
   \draw[draw=red!80!black, ultra thick] (4.50,-3.50) -- (6.00,-2.00);
   \draw[draw=red!80!black, ultra thick] (6.00,-2.00) -- (4.00,-1.00);
   \draw[draw=black, very thick, solid] (-6.00,-2.00) -- (0.00,3.00);
   \draw[draw=black, very thick, solid] (0.00,3.00) -- (0.00,2.00);
   \draw[draw=black, very thick, solid] (-1.00,1.00) -- (-4.00,-1.00);
   \draw[draw=black, very thick, solid] (-3.00,-2.00) -- (0.00,-1.00);
   \draw[draw=black, very thick, solid] (0.00,-1.00) -- (0.00,0.00);
   \draw[draw=black, very thick, solid] (0.00,-1.00) -- (3.00,-2.00);
   \draw[draw=black, very thick, solid] (4.00,-1.00) -- (1.00,1.00);
   \draw[draw=black, very thick, solid] (6.00,-2.00) -- (0.00,3.00);
   \draw[draw=black, very thick, solid] (-4.50,-3.50) -- (4.50,-3.50);
   \draw[draw=black, very thick, solid] (0.00,3.00) circle (0.1);
   \draw[draw=black, fill=black, very thick, solid] (0.00,3.00) circle (0.1);
   \draw[draw=black, fill=black, very thick, solid] (0.00,2.00) circle (0.1);
   \draw[draw=black, fill=black, very thick, solid] (-1.00,1.00) circle (0.1);
   \draw[draw=black, fill=black, very thick, solid] (1.00,1.00) circle (0.1);
   \draw[draw=black, fill=black, very thick, solid] (0.00,0.00) circle (0.1);
   \draw[draw=black, fill=black, very thick, solid] (-3.00,-2.00) circle (0.1);
   \draw[draw=black, fill=black, very thick, solid] (-4.00,-1.00) circle (0.1);
   \draw[draw=black, fill=black, very thick, solid] (-6.00,-2.00) circle (0.1);
   \draw[draw=black, fill=black, very thick, solid] (-4.50,-3.50) circle (0.1);
   \draw[draw=black, fill=black, very thick, solid] (0.00,-1.00) circle (0.1);
   \draw[draw=black, fill=black, very thick, solid] (4.50,-3.50) circle (0.1);
   \draw[draw=black, fill=black, very thick, solid] (3.00,-2.00) circle (0.1);
   \draw[draw=black, fill=black, very thick, solid] (4.00,-1.00) circle (0.1);
   \draw[draw=black, fill=black, very thick, solid] (6.00,-2.00) circle (0.1);
\end{tikzpicture}
\caption{The Schlegel diagram of the auxiliary comparison polyhedron
\(\widehat B_\alpha\), obtained from \(P(3,2)\) by opening its three ideal
vertices into quadrilateral faces.  Every red edge has been
assigned angle \(\pi/2\), whether or not the corresponding quadrilateral was a genuine
cutting face of the original atoroidal component; all black edges have angle
\(\alpha\).}
\label{fig:p32-opened}
\end{figure}

Geometrically, \(\widehat B_\alpha\) is the polar truncation of a generalized polyhedron obtained from the triangular bipyramid with three hyperideal vertices. The three quadrilateral faces are polar truncation faces; hence their edges are orthogonal to the adjacent old faces, in agreement with the angle \(\pi/2\) on these edges.

\begin{lemma}[The exceptional limit \(P(3,2)\)]\label{lem:Bhat-volume}
For all
\[
        \frac{\pi}{3}<\alpha<\arccos\frac13
\]
one has
\[
        \Vol(\widehat B_\alpha)>\vtet.
\]
Consequently, every atoroidal component with right-angled limit \(P(3,2)\) has volume greater than \(\Vol(T_\alpha)\).
\end{lemma}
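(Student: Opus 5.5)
We must show that \(\Vol(\widehat B_\alpha)>\vtet\) for \(\pi/3<\alpha<\arccos(1/3)\). The approach is to exploit the order-\(12\) symmetry of \(\widehat B_\alpha\) and cut it into congruent complete orthoschemes, whose volumes are given by Kellerhals' formula \eqref{eq:kellerhals}.

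\emph{Decomposition.} The symmetry group is generated by reflections in the three vertical planes of the bipyramid and in the equatorial plane. Together with the structure of a hyperideal bipyramid truncated by polar planes, this dissects \(\widehat B_\alpha\) into \(12\) congruent pieces. Concretely, consider the untruncated generalized bipyramid with two finite apices and three hyperideal equatorial vertices. Drop perpendiculars from the common centre \(O\) to a lateral face, from there to a lateral edge, and then to the hyperideal vertex. This gives a complete orthoscheme \(\mathcal O_\alpha\) with one hyperideal vertex, which is truncated by its polar plane. Its essential angles are read off from the Gram matrix of the faces through the vertex, in the Minkowski normal-vector form \(\langle n_1,n_2\rangle=-\cos\theta\). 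They are:
\begin{itemize}
\item half the dihedral angle \(\alpha\) at a lateral edge, i.e.\ \(\alpha/2\) (the lateral apex edges carry angle \(\alpha\));
\item \(\pi/3\) coming from the threefold symmetry around the axis;
\item a third angle \(\beta_{23}(\alpha)\), determined by the angle \(\alpha\) along the edges joining an apex to a hyperideal vertex, via one cosine-rule computation in the vertex link.
\end{itemize}
One should check which edges meet at the finite apex so that the three symmetry planes are assigned correctly, and that the truncation faces, with their right angles, are exactly the polar planes. This matches the geometric description given before the lemma. The result is
\[
\Vol(\widehat B_\alpha)=12\,\Vol(\mathcal O_\alpha).
\]

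\emph{Monotonicity instead of pointwise evaluation.} All edges of \(\widehat B_\alpha\) outside the truncation quadrilaterals have angle \(\alpha\), while the truncation edges stay fixed at \(\pi/2\). The Schl\"afli formula (Theorem~\ref{thm:schlafli}) therefore gives
\[
\frac{d}{d\alpha}\Vol(\widehat B_\alpha)=-\tfrac12\sum_{e \text{ black}} l_e<0 .
\]
Hence it suffices to prove \(\liminf_{\alpha\to\arccos(1/3)}\Vol(\widehat B_\alpha)\ge\vtet\), with strictness coming from strict monotonicity on the open interval. Realizability up to this endpoint must be checked: at \(\alpha_0=\arccos(1/3)<\pi/2\), Andreev's conditions still hold, as argued above the lemma. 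So \(\widehat B_{\alpha_0}\) is a genuine compact polyhedron and the volume is continuous there. It therefore suffices to evaluate \(12\,\Vol(\mathcal O_{\alpha_0})\) by Kellerhals' formula with \(\cos\alpha_0=1/3\), and to verify numerically that it exceeds \(1.0149\).

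As a cross-check, or as a fallback if the closed form is messy, the Klein-model inequality \eqref{eq:klein-volume-element} bounds the volume below by the Euclidean volume of the truncated bipyramid in the Klein model. This is computable from the vertex coordinates but is likely too weak. Alternatively, monotonicity can be compared with the right-angled limit: as \(\alpha\to\pi/2\), \(\widehat B_\alpha\) becomes a compact right-angled polyhedron with \(12\) finite vertices. By Theorem~\ref{thm:atkinson-ra-bound} its volume is at least \(4\voct/32\approx0.458\), which is insufficient, so the explicit endpoint computation is needed.

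\emph{Main obstacle.} The main difficulty is computing the orthoscheme parameters correctly, in particular \(\beta_{23}\) and the auxiliary \(\theta\) with \(\cos^2\beta_{12}-\sin^2\beta_{01}\sin^2\beta_{23}>0\), and confirming that the numerical value at \(\alpha_0\) exceeds \(\vtet\) with a comfortable margin.

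For the final sentence of the lemma, use \eqref{eq:component-to-Bhat}, which gives \(\Vol(Q)\ge\Vol(\widehat B_\alpha)>\vtet>\Vol(T_\alpha)\).
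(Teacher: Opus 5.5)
Your overall route is the paper's: the order-$12$ reflection symmetry, a fundamental complete orthoscheme evaluated by Kellerhals' formula, and Schl\"afli monotonicity to reduce to $\alpha_0=\arccos(1/3)$. At $\alpha_0$ the polyhedron $\widehat B_{\alpha_0}$ is still a compact Andreev polyhedron, as you say. Then come a numerical comparison there and the chain $\Vol(Q)\ge\Vol(R)\ge\Vol(\widehat B_\alpha)$ for the consequence.

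The gap is in the identification of the orthoscheme. You leave it open and describe it incorrectly, yet the whole numerical conclusion depends on it. Your simplex has vertices at the centre, the foot on a lateral face $F$, the foot on a lateral edge $h$, and the hyperideal vertex. This is the flag orthoscheme of a regular polyhedron, and here it is not a fundamental domain.
\begin{itemize}
\item The foot of the perpendicular from the centre to $F$ lies on the vertical mirror bisecting $F$. Hence the face of your simplex spanned by the centre, that foot and the hyperideal vertex is not a mirror.
\item The flags (face, edge, vertex) of the bipyramid form three orbits under the group: apex with apical edge, hyperideal vertex with apical edge, hyperideal vertex with equatorial edge. So this construction yields pieces of three different shapes, and $\Vol(\widehat B_\alpha)=12\Vol(\mathcal O_\alpha)$ fails for it.
\item Your angle list does not fit this simplex. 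The angle $\pi/3$ you attribute to the threefold symmetry is the angle between two consecutive vertical mirrors. Among the faces of your simplex, only the plane through the centre and $h$ is a vertical mirror.
\item You derive your third angle from the apical edges a second time. This forgets the equatorial edges, which also carry angle $\alpha$, and no cosine-rule computation is needed.
\end{itemize}

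The correct chamber is bounded by four planes:
\begin{itemize}
\item the vertical mirror $H_1$ bisecting $F$;
\item the consecutive vertical mirror $H_2$ through the apical edge $h$ of $F$;
\item the face plane $F$;
\item the equatorial mirror $H_0$.
\end{itemize}
Its vertices are the apex, the centre, the midpoint of the equatorial edge $g$ of $F$, and the hyperideal vertex common to the lines of $h$ and $g$, which is truncated by its polar plane. The angles are as follows:
\begin{itemize}
\item $\angle(H_1,H_2)=\pi/3$;
\item $\angle(H_2,F)=\alpha/2$, because $H_2$ bisects the dihedral angle at $h$;
\item $\angle(F,H_0)=\alpha/2$, because $H_0$ bisects the dihedral angle $\alpha$ at $g$;
\item all other pairs are orthogonal.
\end{itemize}
Thus $(\beta_{01},\beta_{12},\beta_{23})=(\pi/3,\alpha/2,\alpha/2)$. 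The position of $\pi/3$ matters in Kellerhals' formula: with $\pi/3$ in the middle slot, as your bullets suggest if read in order, one gets a compact orthoscheme of much smaller volume.

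With the correct values, at $\alpha_0$ one has $\tan\theta_0=\sqrt{5/2}$ and $12\Vol\mathcal O\approx3.306$, far above $\vtet$. The paper makes this rigorous by truncating the Fourier series of $\Lambda$ at $N=100$ with an explicit tail bound.

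A correction to your aside: $\widehat B_\alpha$ has no compact right-angled limit as $\alpha\to\pi/2$. Its quadrilaterals would give prismatic $4$-circuits of angle sum $2\pi$. Instead it degenerates to $P(3,2)$, of volume $\Gcat<\vtet$, which is why the comparison must be made at $\alpha_0$.
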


\begin{proof}
The polyhedron \(\widehat B_\alpha\) has a symmetry group of order \(12\).
We describe one fundamental chamber explicitly.  Choose an old face plane
\(F\), inherited from a triangular face of \(P(3,2)\), one of its apical
edges \(h\), and its equatorial edge \(g\).
Let \(H_0\) be the symmetry plane interchanging the two apices, let \(H_1\)
be the symmetry plane bisecting \(F\), and let \(H_2\) be the symmetry plane
through the axis of the bipyramid and the edge \(h\).  The four planes
\[
        H_1,\qquad H_2,\qquad F,\qquad H_0
\]
bound a complete orthoscheme.  The planes \(H_1,H_2\) are consecutive
vertical mirrors of the triangular symmetry and meet at angle \(\pi/3\).
The mirror \(H_2\) interchanges the two old faces meeting at \(h\), whereas
\(H_0\) interchanges the two old faces meeting at \(g\); consequently
\[
        \angle(H_2,F)=\angle(F,H_0)=\frac{\alpha}{2}.
\]
Every other pair among the four bounding planes which is not consecutive in
the displayed order is orthogonal.  Hence the chamber has essential angles
\begin{equation}\label{eq:Bhat-orthoscheme-angles}
        \beta_{01}=\frac{\pi}{3},\qquad
        \beta_{12}=\frac{\alpha}{2},\qquad
        \beta_{23}=\frac{\alpha}{2}.
\end{equation}
The images of this chamber under the order-\(12\) symmetry group have
disjoint interiors and fill \(\widehat B_\alpha\).  Thus
\begin{equation}\label{eq:Bhat-volume-orthoscheme}
        \Vol(\widehat B_\alpha)=
        12\,\Vol\mathcal O\left(\frac{\pi}{3},\frac{\alpha}{2},\frac{\alpha}{2}\right),
\end{equation}
where the volume of the complete orthoscheme is computed by Kellerhals' formula \eqref{eq:kellerhals}.

\begin{figure}[ht]
\centering
\begin{tikzpicture}[scale=0.95, line join=round, line cap=round,
    v/.style={circle, fill=black, inner sep=1.6pt},
    old/.style={very thick, black},
    aux/.style={thick, gray, dashed},
    cut/.style={very thick, red},
    every node/.style={font=\small}]
	\draw[draw=black, very thick, solid] (-2.00,-2.00) -- (2.00,3.00);
	\draw[draw=black, very thick, solid] (2.00,3.00) -- (2.00,-1.00);
	\draw[draw=black, very thick, solid] (-2.00,-2.00) -- (-0.50,-2.50);
	\draw[draw=black, very thick, solid] (1.50,-2.50) -- (2.00,-1.00);
	\draw[draw=black, very thick, solid] (0.50,-0.50) -- (2.00,3.00);
	\draw[draw=red!80!black, ultra thick] (-0.50,-2.50) -- (0.50,-0.50);
	\draw[draw=red!80!black, ultra thick] (0.50,-0.50) -- (1.50,-2.50);
	\draw[draw=red!80!black, ultra thick] (1.50,-2.50) -- (-0.50,-2.50);
	\draw[draw=black, very thick, dashed] (-2.00,-2.00) -- (2.00,-1.00);
	\draw[draw=red!80!black, ultra thick] (0.50,-0.50) -- (-0.50,-2.50);
	\draw[draw=red!80!black, ultra thick] (-0.50,-2.50) -- (1.50,-2.50);
	\draw[draw=black, thin, solid] (2.00,3.00) circle (0.1);
\draw[draw=black, fill=black, very thick, solid] (2.00,3.00) circle (0.1);
\draw[draw=black, fill=black, very thick, solid] (2.00,-1.00) circle (0.1);
\draw[draw=black, fill=black, very thick, solid] (-2.00,-2.00) circle (0.1);
\draw[draw=black, fill=black, very thick, solid] (-0.50,-2.50) circle (0.1);
\draw[draw=black, fill=black, very thick, solid] (1.50,-2.50) circle (0.1);
\draw[draw=black, fill=black, very thick, solid] (0.50,-0.50) circle (0.1);

	\node[black, anchor=south west] at (1.94,0.75) {$\frac{\pi}{3}$};
	\node[black, anchor=south west] at (-1.78,-2.95) {$\frac{\alpha}{2}$};
	\node[black, anchor=south west] at (0.94,0.25) {$\frac{\alpha}{2}$};
\end{tikzpicture}
\caption{A fundamental complete orthoscheme in the decomposition of
$\widehat B_\alpha$. Its essential angles are
$\pi/3,\alpha/2,\alpha/2$. The hyperideal vertex is cut off by its polar
plane; the edges of the new truncation face are red.}
\label{fig:bhat-orthoscheme}
\end{figure}

By the Schl\"afli formula, the volume of \(\widehat B_\alpha\) decreases as
\(\alpha\) increases. Therefore its infimum on the open interval under
consideration is the limiting value at
\[
        \alpha_0=\arccos\frac13\approx1.230959417 .
\]
We certify the numerical comparison rather than relying on a decimal
approximation alone.  Recall the absolutely convergent Fourier expansion
\[
 \Lambda(x)=\frac12\sum_{k=1}^{\infty}\frac{\sin(2kx)}{k^2}.
\]
If \(\Lambda_N\) denotes the sum through \(k=N\), then uniformly in \(x\)
\begin{equation}\label{eq:lobachevsky-tail}
 \left|\Lambda(x)-\Lambda_N(x)\right|
 \le \frac12\sum_{k>N}\frac1{k^2}<\frac1{2N}.
\end{equation}
For \(\alpha_0=\arccos(1/3)\), the parameter \(\theta_0\) in
\eqref{eq:kellerhals} satisfies
\[
 1.2309594173<\alpha_0<1.2309594174,
 \qquad
 1.0068536854<\theta_0<1.0068536855.
\]
Substitution of these rational intervals into the finite sum with \(N=100\),
using directed rounding for the sine terms, gives
\[
 3.3047<
 12\,\Vol_{100}\mathcal O
 \left(\frac{\pi}{3},\frac{\alpha_0}{2},
                    \frac{\alpha_0}{2}\right)
 <3.3048.
\]
Here \(\Vol_{100}\) means that every occurrence of \(\Lambda\) in
\eqref{eq:kellerhals} is replaced by \(\Lambda_{100}\).
The sum of the absolute coefficients of the Lobachevsky terms in
\eqref{eq:kellerhals} is \(8\).  After multiplying by \(12/4\),
\eqref{eq:lobachevsky-tail} therefore bounds the total truncation error by
\(12/100=0.12\).  Consequently
\[
        \Vol(\widehat B_{\alpha_0})>3.1847.
\]
On the other hand,
\[
 \vtet=3\Lambda\left(\frac{\pi}{3}\right)
 \le \frac{\pi^2}{4}<2.47,
\]
where the first inequality follows at once by taking absolute values in the
same Fourier series.  Thus
\(\Vol(\widehat B_{\alpha_0})>\vtet\).  For reference, a higher-precision
evaluation is
\[
        \Vol(\widehat B_{\alpha_0})=3.305902996\ldots .
\]
It follows that
\[
        \Vol(\widehat B_\alpha)>\vtet
\]
for all \(\alpha\in(\pi/3,\alpha_0)\).  Together with
\eqref{eq:component-to-Bhat}, every atoroidal component with
right-angled limit \(P(3,2)\) has volume greater than
\(\vtet>\Vol(T_\alpha)\).
\end{proof}

\section{The graph-type case}\label{sec:prismatic}

Throughout the applications in this section, \(P_\alpha\) is an equiangular
compact hyperbolic polyhedron with
\[
        \frac{\pi}{3}<\alpha<\alpha_0=\arccos\frac13,
\]
and Atkinson's decomposition is assumed to have no atoroidal component. Hence
\(P_\alpha\) has graph type. We shall reduce it, through volume-nonincreasing
deformations, to a single prism. The geometric move is a direct equiangular
version of Inoue's edge surgery: only the angle of the seam being removed is
increased, while every surviving angle remains equal to \(\alpha\).  The
surgery lemma itself is proved in the full range
\(\pi/3<\alpha<\pi/2\).

\subsection{Prismatic trees and leaf blocks}

A prism block \(D\) of degree \(n\) is written combinatorially as
\[
        D=Q_n\times I.
\]
Its two faces \(Q_n\times\{0\}\) and \(Q_n\times\{1\}\) are its \emph{bases}.
The remaining \(n\) quadrilateral faces are its \emph{side faces}. On a side
face there are two opposite base-direction edges, contained in the bases, and
two opposite lateral-direction edges, each shared by two consecutive side
faces.

Suppose two prism blocks are glued along quadrilateral cutting faces. Relative
to their product structures, the gluing either preserves or interchanges the
two pairs of opposite edges of the attaching quadrilateral. In the first case
the union is again a single prism, so this gluing does not give two distinct
vertices of the prismatic tree. We shall therefore use the tree only for the
second, \emph{crossed}, gluings. Geometrically, a crossed gluing interchanges
the base and lateral directions on the attaching quadrilateral; this is what
is meant below by turning one prism through a quarter turn.

A \(4\)-prism attached as a leaf is combinatorially absorbed by the adjacent
prism: the attachment does not change the abstract polyhedron. Thus, once the
surgeries below have reduced a leaf block to a \(4\)-prism, that block no
longer occurs as a separate vertex of the prismatic tree.
Consequently every leaf retained in the reduced tree has degree at least five.
This absorption is shown in Figure~\ref{fig:four-prism-absorption}.

\begin{figure}[ht]
\centering
\resizebox{0.96\linewidth}{!}{%
\begin{tikzpicture}
	\draw[draw=black, fill=black, very thick, solid] (-6.00,2.00) circle (0.1);
	\draw[draw=black, fill=black, very thick, solid] (-5.00,3.00) circle (0.1);
	\draw[draw=black, fill=black, very thick, solid] (-6.00,-1.00) circle (0.1);
	\draw[draw=black, fill=black, very thick, solid] (-5.00,0.00) circle (0.1);
	\draw[draw=black, very thick, solid] (-6.00,2.00) -- (-6.00,-1.00);
	\draw[draw=black, very thick, solid] (-6.00,2.00) -- (-5.00,3.00);
	\draw[draw=black, very thick, solid] (-5.00,3.00) -- (-5.00,0.00);
	\draw[draw=black, very thick, solid] (-6.00,-1.00) -- (-5.00,0.00);
	\draw[draw=black, very thick, solid] (-6.00,-1.00) -- (-7.00,-1.00);
	\draw[draw=black, very thick, solid] (-6.00,2.00) -- (-7.00,2.00);
	\draw[draw=black, very thick, solid] (-5.00,3.00) -- (-5.50,3.50);
	\draw[draw=black, fill=black, very thick, solid] (-4.00,-2.00) circle (0.1);
	\draw[draw=black, fill=black, very thick, solid] (-3.00,-1.00) circle (0.1);
	\draw[draw=black, fill=black, very thick, solid] (-4.00,1.00) circle (0.1);
	\draw[draw=black, fill=black, very thick, solid] (-3.00,2.00) circle (0.1);
	\draw[draw=black, fill=black, very thick, solid] (-2.00,-3.00) circle (0.1);
	\draw[draw=black, fill=black, very thick, solid] (-1.00,-2.00) circle (0.1);
	\draw[draw=black, fill=black, very thick, solid] (-2.00,0.00) circle (0.1);
	\draw[draw=black, fill=black, very thick, solid] (-1.00,1.00) circle (0.1);
	\draw[draw=black, very thick, solid] (-4.00,1.00) -- (-4.00,-2.00);
	\draw[draw=black, very thick, solid] (-4.00,1.00) -- (-3.00,2.00);
	\draw[draw=black, very thick, solid] (-4.00,1.00) -- (-2.00,0.00);
	\draw[draw=black, very thick, solid] (-2.00,0.00) -- (-2.00,-3.00);
	\draw[draw=black, very thick, solid] (-2.00,-3.00) -- (-1.00,-2.00);
	\draw[draw=black, very thick, solid] (-2.00,-3.00) -- (-4.00,-2.00);
	\draw[draw=black, very thick, solid] (-1.00,1.00) -- (-1.00,-2.00);
	\draw[draw=black, very thick, solid] (-3.00,2.00) -- (-1.00,1.00);
	\draw[draw=black, very thick, solid] (-2.00,0.00) -- (-1.00,1.00);
	\draw[draw=black, very thick, dashed] (-3.00,-1.00) -- (-1.00,-2.00);
	\draw[draw=black, very thick, dashed] (-3.00,-1.00) -- (-4.00,-2.00);
	\draw[draw=black, very thick, dashed] (-3.00,2.00) -- (-3.00,-1.00);
	\draw[draw=black, very thick, dashed] (-5.00,0.00) -- (-5.50,0.50);
	\draw[draw=black, fill=black, very thick, dashed] (2.00,-1.00) circle (0.1);
	\draw[draw=black, fill=black, very thick, dashed] (3.00,0.00) circle (0.1);
	\draw[draw=black, fill=black, very thick, dashed] (2.00,2.00) circle (0.1);
	\draw[draw=black, fill=black, very thick, dashed] (3.00,3.00) circle (0.1);
	\draw[draw=black, fill=black, very thick, dashed] (4.00,-2.00) circle (0.1);
	\draw[draw=black, fill=black, very thick, dashed] (5.00,-1.00) circle (0.1);
	\draw[draw=black, fill=black, very thick, dashed] (4.00,1.00) circle (0.1);
	\draw[draw=black, fill=black, very thick, dashed] (5.00,2.00) circle (0.1);
	\draw[draw=black, very thick, solid] (2.00,2.00) -- (4.00,1.00);
	\draw[draw=black, very thick, solid] (4.00,1.00) -- (5.00,2.00);
	\draw[draw=black, very thick, solid] (5.00,2.00) -- (3.00,3.00);
	\draw[draw=black, very thick, solid] (4.00,-2.00) -- (2.00,-1.00);
	\draw[draw=black, very thick, solid] (4.00,-2.00) -- (5.00,-1.00);
	\draw[draw=black, very thick, solid] (4.00,-2.00) -- (4.00,1.00);
	\draw[draw=black, very thick, solid] (5.00,2.00) -- (5.00,-1.00);
	\draw[draw=black, very thick, dashed] (3.00,0.00) -- (5.00,-1.00);
	\draw[draw=black, very thick, solid] (2.00,2.00) -- (1.00,2.00);
	\draw[draw=black, very thick, solid] (2.00,-1.00) -- (1.00,-1.00);
	\draw[draw=black, very thick, solid] (3.00,3.00) -- (2.50,3.50);
	\draw[draw=black, very thick, dashed] (3.00,0.00) -- (2.50,0.50);
	\draw[draw=black, fill=black, very thick, dashed] (8.00,-2.00) circle (0.1);
	\draw[draw=black, fill=black, very thick, dashed] (9.00,-1.00) circle (0.1);
	\draw[draw=black, fill=black, very thick, dashed] (8.00,1.00) circle (0.1);
	\draw[draw=black, fill=black, very thick, dashed] (9.00,2.00) circle (0.1);
	\draw[draw=black, very thick, dashed] (9.00,-1.00) -- (8.50,-0.50);
	\draw[draw=black, very thick, solid] (8.00,1.00) -- (8.00,-2.00);
	\draw[draw=black, very thick, solid] (8.00,-2.00) -- (9.00,-1.00);
	\draw[draw=black, very thick, solid] (9.00,-1.00) -- (9.00,2.00);
	\draw[draw=black, very thick, solid] (8.00,-2.00) -- (7.00,-2.00);
	\draw[draw=black, very thick, solid] (8.00,1.00) -- (7.00,1.00);
	\draw[draw=black, very thick, solid] (9.00,2.00) -- (8.50,2.50);
	\draw[draw=black, very thick, solid] (8.00,1.00) -- (9.00,2.00);
	\node[black, anchor=south west] at (-4.06,4.25) {(a)};
	\node[black, anchor=south west] at (1.94,4.25) {(b)};
	\node[black, anchor=south west] at (7.44,4.25) {(c)};
	\draw[draw=red, fill=red, very thick, solid] (2.00,-1.00) circle (0.1);
	\draw[draw=red, fill=red, very thick, solid] (3.00,0.00) circle (0.1);
	\draw[draw=red, fill=red, very thick, solid] (2.00,2.00) circle (0.1);
	\draw[draw=red, fill=red, very thick, solid] (3.00,3.00) circle (0.1);
	\draw[draw=red!80!black, ultra thick, densely dashed] (2.00,2.00) -- (2.00,-1.00);
	\draw[draw=red!80!black, ultra thick, densely dashed] (2.00,2.00) -- (3.00,3.00);
	\draw[draw=red!80!black, ultra thick, densely dashed] (2.00,-1.00) -- (1.00,-1.00);
	\draw[draw=red!80!black, ultra thick, densely dashed] (2.00,2.00) -- (1.00,2.00);
	\draw[draw=red!80!black, ultra thick, densely dashed] (3.00,3.00) -- (2.50,3.50);
	\draw[draw=red!80!black, ultra thick, densely dashed] (3.00,0.00) -- (2.50,0.50);
	\draw[draw=red!80!black, ultra thick, densely dashed] (2.00,-1.00) -- (3.00,0.00);
	\draw[draw=red!80!black, ultra thick, densely dashed] (3.00,3.00) -- (3.00,0.00);
	\draw[draw=black, very thick, solid] (2.00,2.00) -- (1.00,2.00);
	\draw[draw=black, very thick, solid] (3.00,3.00) -- (2.50,3.50);
	\draw[draw=black, very thick, solid] (2.00,-1.00) -- (1.00,-1.00);
	\draw[draw=black, very thick, dashed] (3.00,0.00) -- (2.50,0.50);
	\draw[draw=red, fill=red, very thick, dashed] (3.00,0.00) circle (0.1);
	\draw[draw=red, fill=red, very thick, dashed] (2.00,-1.00) circle (0.1);
	\draw[draw=red, fill=red, very thick, dashed] (2.00,2.00) circle (0.1);
	\draw[draw=red, fill=red, very thick, dashed] (3.00,3.00) circle (0.1);
	\draw[draw=red, fill=red, very thick, dashed] (2.00,2.00) circle (0.1);
	\draw[draw=black, very thick, dashed] (4.00,1.00) circle (0.1);
	\draw[draw=black, very thick, dashed] (4.00,1.00) circle (0.1);
	\draw[draw=black, ultra thick, dashed] (4.00,1.00) circle (0.1);
	\draw[draw=black, fill=black, very thick, dashed] (4.00,1.00) circle (0.1);
	\draw[draw=black, fill=black, ultra thick, dashed] (4.00,1.00) circle (0.1);
	\draw[draw=black, fill=black, ultra thick, dashed] (4.00,1.00) circle (0.1);
	\draw[draw=black, fill=black, ultra thick, dashed] (4.00,1.00) circle (0.1);
	\draw[draw=black, fill=black, very thick, solid] (4.00,1.00) circle (0.1);
	\draw[draw=black, fill=black, very thick, solid] (5.00,2.00) circle (0.1);
	\draw[draw=black, fill=black, very thick, solid] (5.00,-1.00) circle (0.1);
	\draw[draw=black, fill=black, very thick, solid] (4.00,-2.00) circle (0.1);
	\draw[draw=black, fill=black, very thick, solid] (8.00,-2.00) circle (0.1);
	\draw[draw=black, fill=black, very thick, solid] (8.00,1.00) circle (0.1);
	\draw[draw=black, fill=black, very thick, solid] (9.00,2.00) circle (0.1);
	\draw[draw=black, fill=black, very thick, solid] (9.00,-1.00) circle (0.1);
	\draw[draw=red, fill=red, very thick, solid] (2.00,2.00) circle (0.1);
	\draw[draw=red, fill=red, very thick, solid] (3.00,3.00) circle (0.1);
	\draw[draw=red, fill=red, very thick, solid] (2.00,-1.00) circle (0.1);
	\draw[draw=red, fill=red, very thick, solid] (3.00,0.00) circle (0.1);
\end{tikzpicture}
}
\caption{Combinatorial absorption of a leaf \(4\)-prism.  In
\textup{(a)} the leaf \(4\)-prism and the adjacent prism are shown before
gluing.  In \textup{(b)} they have been identified along the crossed
quadrilateral; the part that is suppressed is thick and dash-patterned. After removing the
internal gluing face, one
obtains \textup{(c)}, which is combinatorially the original adjacent prism.}
\label{fig:four-prism-absorption}
\end{figure}

For later use we record the free faces of a leaf directly. A leaf has one
attaching side face.  If its gluing is crossed, that attaching face disappears
and the two neighboring side faces merge with the two bases of the adjacent
block.  Removing these three consecutive side positions from the cyclic list
of the \(n\) side faces leaves exactly
\begin{equation}\label{eq:leaf-free-chain}
        n-3
\end{equation}
free quadrilateral faces, sharing edges successively in one linear chain.  This
is the chain on which the surgeries below are performed.  For a leaf of the
initial reduced decomposition, Atkinson and Rafalski prove in addition that
\(n\ge5\) \cite[Lemma~3.5]{AtkinsonRafalski}.  The elementary count of the
free chain, unlike the canonicality statement, remains valid for the coarser
prism descriptions obtained later by absorbing \(4\)-prisms.

\begin{lemma}[No prismatic \(3\)-circuits in a prism tree]
\label{lem:prism-tree-no-three}
Let \(P\) be a trivalent abstract polyhedron obtained by gluing a finite tree
of prisms \(Q_{n_v}\times I\), where \(n_v\ge4\), along quadrilateral side
faces and suppressing every gluing face.  Then \(P\) has no prismatic
\(3\)-circuit.
\end{lemma}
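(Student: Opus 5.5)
I plan to argue by induction on the number of prism blocks in the tree. I will work with the dual triangulation \(P^*\) and the equivalent description of a prismatic \(3\)-circuit as a triple of faces \(F_1,F_2,F_3\) that are pairwise adjacent but have no common vertex. Since \(P\) is trivalent, this is the same as a \(3\)-cycle in \(P^*\) that does not bound a face of \(P^*\).

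The base case is a single block \(Q_n\times I\) with \(n\ge4\). Take three pairwise adjacent faces. The two bases are not adjacent to each other. A base together with two side faces is a triple sharing a vertex only when the side faces are consecutive, and otherwise the side faces are not adjacent at all. Three side faces cannot be pairwise adjacent, because they are cyclically arranged with \(n\ge4\). Hence every pairwise adjacent triple meets at a vertex, and there is no prismatic \(3\)-circuit.

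For the inductive step I remove a leaf block \(D=Q_n\times I\), which is attached to the rest \(P'\) along one quadrilateral. Here \(P'\) is again a tree of prisms with \(n_v\ge4\), so it has no prismatic \(3\)-circuit. Gluing along the quadrilateral amounts to cutting \(P\) along a prismatic \(4\)-circuit \(\gamma\) that separates \(D\) from \(P'\). The four faces of \(P\) met by \(\gamma\) are merged faces: each is the union of a face of \(D\) adjacent to the attaching face and a face of \(P'\) adjacent to the gluing face. Every other face of \(P\) lies strictly on one side of \(\gamma\).

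Now suppose \(F_1,F_2,F_3\) give a prismatic \(3\)-circuit in \(P\). I split into cases according to how many of the \(F_i\) are merged faces.
\begin{enumerate}[label=\textup{(\alph*)}]
\item If all three lie in one side, possibly after replacing merged faces by their parts on that side, then the adjacencies and non-concurrence persist in that side. There the triple would be a prismatic \(3\)-circuit of \(D\) or of \(P'\), contradicting the base case or the induction hypothesis.
\item If two of the faces are strictly on opposite sides, they cannot be adjacent, since adjacency across the cut happens only through merged faces.
\item The remaining case is two or three merged faces, possibly together with one face from one side. Consecutive merged faces meet along an edge crossed by \(\gamma\), and opposite merged faces are not adjacent. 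So the merged faces among the \(F_i\) must be consecutive, and three merged faces would require a \(4\)-cycle to contain a triangle. With two consecutive merged faces and a third face on one side, the triple lives in that side. There it is either a genuine vertex triple, which stays a vertex in \(P\), or a prismatic \(3\)-circuit of that side, which is excluded.
\end{enumerate}

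The main obstacle is making case (c) rigorous. One must check that a vertex of the side piece at which the three faces meet is not destroyed by suppressing the gluing face. This can fail only if that vertex is a corner of the attaching quadrilateral, and there the third face would have to be the gluing face itself, which is not a face of \(P\). I will verify this corner configuration explicitly using the crossed product structure and the assumption \(n\ge4\). That assumption guarantees that the two neighbours of the attaching side face in \(D\) are distinct and non-adjacent to each other, apart from their common edge with the attaching face.
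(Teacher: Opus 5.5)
Your proposal is correct and follows the paper's proof. Both argue by induction on the number of blocks, removing a leaf, with the single prism (\(n\ge4\)) as base case, and both rest on the chordlessness of the cycle of four merged faces and on the fact that a pairwise-adjacent triple containing a non-merged face lies on one side; the paper phrases this in the dual triangulation, as a connected sum along the boundary \(4\)-cycles of the stars of the two valence-four vertices, and you should note that non-adjacency of opposite merged faces on the \(P'\) side comes from the induction hypothesis rather than from \(n\ge4\), since two adjacent opposite neighbours of the gluing quadrilateral in \(P'\) would form a prismatic \(3\)-circuit with it.
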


\begin{proof}
Pass to the dual triangulation \(\Delta=P^*\).  Since \(P\) is trivalent, a
prismatic \(3\)-circuit in \(P\) is the same thing as a \(3\)-cycle in
\(\Delta\) which is not the boundary of a triangular face.  We prove that no
such cycle occurs, by induction on the number of prism blocks.

For one block, \(\Delta\) is the \(n\)-bipyramid, where \(n\ge4\).  Its only
\(3\)-cycles are its triangular faces.  (For \(n=3\), the equatorial
triangle would be a nonfacial \(3\)-cycle; this is exactly why triangular
prisms are excluded.)

Now remove a leaf block from the tree.  Let \(P'\) be the polyhedron formed
by the remaining blocks, and let \(D=Q_n\times I\) be the leaf prism.  In
the dual triangulations \((P')^*\) and \(D^*\), the two quadrilateral faces
which are to be glued correspond to vertices \(x\) and \(y\) of valence
four.  The dual of the glued polyhedron is obtained by deleting the open
stars of \(x\) and \(y\) and identifying their boundary \(4\)-cycles.

Both boundary \(4\)-cycles are chordless.  This is immediate on the
bipyramid side.  On the \((P')^*\) side, a chord together with \(x\) would
form a nonfacial \(3\)-cycle, contrary to the induction hypothesis.

Consider a \(3\)-cycle after the two boundary cycles have been identified.
If it contains a vertex away from the common \(4\)-cycle, then all its
vertices lie on one side of the gluing.  It was therefore already a
\(3\)-cycle on that side and, by induction, bounds a face.  Otherwise all
three vertices lie on the common \(4\)-cycle, but three vertices of a
chordless \(4\)-cycle cannot form a triangle.  Thus every \(3\)-cycle in
\(\Delta\) is facial, and \(P\) has no prismatic \(3\)-circuit.
\end{proof}

\subsection{Deleting a lateral seam}

Let \(P\) be a trivalent abstract polyhedron, and let \(e\) be an edge. Let
\(A\) and \(B\) be the two faces containing \(e\), and let \(u,v\) be its
endpoints. At \(u\) there is a third face \(C\), and at \(v\) there is a third
face \(D\). Deleting \(e\) means removing \(e\), merging \(A\) and \(B\), and
demoting \(u,v\). Thus the two edges of \(C\) ending at \(u\) concatenate to
one edge, and the two edges of \(D\) ending at \(v\) concatenate to one edge.
The corresponding change in the degrees of the four incident faces is shown
in Figure~\ref{fig:edge-deletion-face-degrees}.

\begin{figure}[ht]
\centering
\resizebox{0.57\linewidth}{!}{%
\begin{tikzpicture}[scale=1.2,line cap=round,line join=round]
  \coordinate (p1) at (0,0);
  \coordinate (p2) at (2,0);
  \coordinate (p3) at (0,1);
  \coordinate (p4) at (2,1);
  \draw[line width=1.2pt,red!80!black] (0,1)--(1,0.8)--(2,1);
  \draw[line width=1.2pt,red!80!black] (0,0)--(1,0.2)--(2,0);
  \draw[line width=1.2pt,red!80!black] (1,0.2)--(1,0.8);
  \foreach \p in {(p1),(p2),(p3),(p4)} \fill[black] \p circle (1.7pt);
  \fill[red!80!black] (1,0.2) circle (1.7pt);
  \fill[red!80!black] (1,0.8) circle (1.7pt);
  \draw[thick] (0,1)--(-0.2,1.2) (0,1)--(-0.2,0.8)
               (2,1)--(2.2,1.2) (2,1)--(2.2,0.8)
               (2,0)--(2.2,0.2) (2,0)--(2.2,-0.2)
               (0,0)--(-0.2,-0.2) (0,0)--(-0.2,0.2);
  \node at (1,1.5) {$n_1$};
  \node at (0.4,0.5) {$n_2$};
  \node at (1.6,0.5) {$n_3$};
  \node at (1,-0.5) {$n_4$};
  \node[red!80!black] at (1.2,0.5) {$e$};

  \draw[-stealth,thick] (3,0.5)--(4,0.5);

  \coordinate (q1) at (5,0);
  \coordinate (q2) at (6,0);
  \coordinate (q3) at (5,1);
  \coordinate (q4) at (6,1);
  \draw[line width=1.2pt,red!80!black] (q1)--(q2) (q3)--(q4);
  \foreach \p in {(q1),(q2),(q3),(q4)} \fill[black] \p circle (1.7pt);
  \draw[thick] (5,1)--(4.8,1.2) (5,1)--(4.8,0.8)
               (6,1)--(6.2,1.2) (6,1)--(6.2,0.8)
               (6,0)--(6.2,0.2) (6,0)--(6.2,-0.2)
               (5,0)--(4.8,-0.2) (5,0)--(4.8,0.2);
  \node at (5.5,1.5) {$n_1-1$};
  \node at (5.5,0.5) {$n_2+n_3-4$};
  \node at (5.5,-0.5) {$n_4-1$};
\end{tikzpicture}%
}
\caption{Local combinatorics of edge deletion. Before deletion the four
faces around \(e\) have \(n_1,n_2,n_3,n_4\) sides. Removing \(e\) merges the
two incident faces and suppresses its endpoints. Consequently the merged
face has \(n_2+n_3-4\) sides, while the other two faces have
\(n_1-1\) and \(n_4-1\) sides.}
\label{fig:edge-deletion-face-degrees}
\end{figure}

In our application, \(e\) is a lateral seam between two consecutive side
quadrilaterals. Their union after deletion is again a quadrilateral. The two
base-direction edges at the upper endpoint concatenate into the upper edge of
the new quadrilateral, and the analogous two lower edges concatenate into its
lower edge.

\begin{lemma}[Prismatic circuits under edge deletion]\label{lem:circuit-correspondence}
Assume that \(P\) has no prismatic \(3\)-circuit and that deleting \(e\)
produces an abstract polyhedron \(P\setminus e\). Then the following are
equivalent:
\begin{enumerate}[label=\textup{(\alph*)}]
\item \(e\) belongs to a prismatic \(4\)-circuit of \(P\);
\item \(P\setminus e\) has a prismatic \(3\)-circuit which was not present in
\(P\).
\end{enumerate}
In particular, if \(P\setminus e\) has no prismatic \(3\)-circuit, then \(e\)
belongs to no prismatic \(4\)-circuit of \(P\).
\end{lemma}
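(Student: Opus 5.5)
The plan is to pass to the dual triangulation \(\Delta=P^*\), as in the proof of Lemma~\ref{lem:prism-tree-no-three}, and to translate both prismatic circuits and edge deletion into statements about cycles in \(\Delta\). Let \(a=A^*\), \(b=B^*\), \(c=C^*\), \(d=D^*\). The edge \(e\) is dual to the edge \(ab\) of \(\Delta\), whose two incident triangles are \(abc\) and \(abd\). Deleting \(e\), that is, merging \(A\) and \(B\) and suppressing \(u,v\), becomes the contraction of \(ab\) to a single vertex \(w\). The triangles \(abc,abd\) collapse to the edges \(wc,wd\). Every other triangle of \(\Delta\) survives, with \(a\) or \(b\) renamed \(w\). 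Since \(P\setminus e\) is assumed to be a polyhedron, the contracted complex \(\Delta'\) is again a triangulation, namely \((P\setminus e)^*\).

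First I record the dictionary. Since \(P\) is trivalent, a prismatic \(3\)-circuit is a nonfacial \(3\)-cycle of \(\Delta\), as already used in Lemma~\ref{lem:prism-tree-no-three}. A prismatic \(4\)-circuit is a \(4\)-cycle in \(\Delta\) no two of whose edges lie in a common triangle. Two consecutive edges \(xy,yz\) lie in a common triangle only if \(xz\) is an edge. If \(xz\) is an edge but \(xyz\) is not a face, we get a nonfacial \(3\)-cycle, which the hypothesis excludes. Hence, under our hypothesis, prismatic \(4\)-circuits are exactly the chordless \(4\)-cycles of \(\Delta\). Such a circuit meets \(e\) precisely when it uses the dual edge \(ab\).

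For (a)\(\Rightarrow\)(b), take a chordless \(4\)-cycle \(a\,b\,x\,y\). After contraction, \(w\,x\,y\) is a \(3\)-cycle of \(\Delta'\). It would be a face only if \(axy\) or \(bxy\) were a triangle of \(\Delta\), which requires one of the chords \(ax\) or \(by\). So \(wxy\) is nonfacial. It is new because it passes through the new vertex \(w\).

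For (b)\(\Rightarrow\)(a), a \(3\)-cycle avoiding \(w\) is facial in \(\Delta'\) if and only if it is facial in \(\Delta\). Hence a new nonfacial \(3\)-cycle has the form \(w\,x\,y\), where \(x,y\) are adjacent and each is adjacent to \(a\) or \(b\). If both are adjacent to \(a\), then \(axy\) is a \(3\)-cycle of \(\Delta\), hence facial by hypothesis. It is not \(abc\) or \(abd\), since \(x,y\neq b\), so it survives as the face \(wxy\) of \(\Delta'\), a contradiction. The case where both are adjacent to \(b\) is identical. Therefore, after renaming, \(x\sim b\), \(y\sim a\), \(x\not\sim a\) and \(y\not\sim b\). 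Then \(a\,b\,x\,y\) is a chordless \(4\)-cycle through \(ab\), that is, a prismatic \(4\)-circuit of \(P\) crossing \(e\). The final sentence of the lemma is the contrapositive of (a)\(\Rightarrow\)(b).

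The main point needing care is the dictionary step. One must check that the faces of \(\Delta'\) are exactly the surviving triangles, and that the "no two edges in a common face" condition reduces to chordlessness using the absence of prismatic \(3\)-circuits. Once this is settled, the remaining case analysis is routine.
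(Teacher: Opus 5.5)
Your proof is correct and follows essentially the same route as the paper. You pass to the dual triangulation, treat deletion of \(e\) as contraction of \(ab\), and run the same two-direction case analysis on \(3\)-cycles through the contracted vertex; you also justify explicitly that prismatic \(4\)-circuits are exactly the chordless \(4\)-cycles when there are no prismatic \(3\)-circuits, a step the paper only asserts.
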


\begin{proof}
Pass to the dual triangulation \(\Delta=P^*\).  If \(A,B\) are the faces
incident to \(e\), then deleting \(e\) is dual to contracting the edge
\(AB\).  The two triangles \(ABC\) and \(ABD\) incident to \(AB\) disappear,
and the two pairs \(AC,BC\) and \(AD,BD\) become the single edges \(EC\) and
\(ED\), where \(E\) is the contracted vertex.

For a trivalent polyhedron, a prismatic \(3\)-circuit is exactly a
nonfacial \(3\)-cycle in the dual triangulation.  Likewise, a prismatic
\(4\)-circuit containing \(e\) is a chordless \(4\)-cycle in \(\Delta\)
containing \(AB\).

Let \(AUVBA\) be such a \(4\)-cycle, with \(AB\) as its fourth edge.
Contracting \(AB\) produces the \(3\)-cycle \(EUV E\).  It cannot bound a
triangle of the contracted triangulation, since its lift would add a
diagonal to the chordless \(4\)-cycle.  Hence it is a new prismatic
\(3\)-circuit.

Conversely, let \(EUV E\) be a new nonfacial \(3\)-cycle after the
contraction.  The two edges incident to \(E\) cannot both lift to edges at
\(A\), since then \(AUV A\) would be a \(3\)-cycle in \(\Delta\); it would
either be a pre-existing prismatic \(3\)-cycle or a face and hence would make
\(EUV E\) facial.  Both alternatives are excluded.  The same applies to
\(B\).  Thus one edge lifts at \(A\) and the other at \(B\), and together
with \(AB\) and \(UV\) they form a \(4\)-cycle through \(AB\).  A diagonal of
this cycle would again give a pre-existing \(3\)-cycle or make \(EUV E\)
facial.  Therefore the lifted \(4\)-cycle is chordless, hence prismatic.
\end{proof}

\subsection{Direct equiangular edge surgery}\label{subsec:direct-edge-surgery}

We now prove the deformation which will be used at every step of the
graph-type reduction.  As in Inoue's right-angled surgery, only the angle at
the active edge varies.  The new difficulty is that the four fixed polar
sides adjacent to that edge have length \(L=\pi-\alpha\), rather than
\(\pi/2\), so the affected two-triangle patch is not a spherical bigon.

\begin{lemma}[Direct equiangular edge surgery]\label{lem:direct-equiangular-edge-surgery}
Let \(\pi/3<\alpha<\pi/2\), let \(P_0\) be a compact equiangular hyperbolic
polyhedron with common angle \(\alpha\), and let \(e\) be an edge.  Let
\(A,B\) be the faces containing
\(e\), and let \(C,D\) be the third faces at its endpoints.  Assume:
\begin{enumerate}[label=\textup{(S\arabic*)},itemsep=0.15em,topsep=0.3em]
\item \(A\) and \(B\) are quadrilaterals;
\item deleting \(e\) gives an abstract polyhedron \(P_1\) which is realized as
a compact equiangular hyperbolic polyhedron with angle \(\alpha\);
\item \(e\) belongs to no prismatic \(4\)-circuit of \(P_0\);
\end{enumerate}
Then there is a family \(P_t\), \(\alpha\le t<\pi\), in which only the angle at
\(e\) changes and
\[
        \theta_e(P_t)=t.
\]
As \(t\to\pi\), the family converges to \(P_1\), and
\[
        \Vol(P_0)>\Vol(P_1).
\]
\end{lemma}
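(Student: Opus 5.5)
We will construct the family through the Hodgson--Rivin criterion (Theorem~\ref{thm:rhs}), working entirely on the polar side, as in Inoue's right-angled surgery.

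\textbf{Candidate metrics.} For $s\in(0,L]$ let $M_s$ be the cone metric obtained by gluing polar triangles. Every vertex of $P_0$ other than $u,v$ contributes an equilateral triangle of side $L=\pi-\alpha$. The two endpoints of $e$ contribute the triangles $ABC$ and $ABD$ of $T(s)$ from Lemma~\ref{lem:special-polar-triangles}, with $|AB|=s=\pi-t$. The union of these two triangles is the patch $K_s$.

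\textbf{Collapse at $s=0$.} As $s\to0$ the triangles $T(s)$ degenerate to segments of length $L$ ($\gamma\to0$, $\delta\to\pi/2$). The points $A$ and $B$ merge, and $AC,BC$ become the single polar edge $EC$, and likewise for $D$. So $M_0$ is exactly the polar $P_1^\circ$ of the realized polyhedron in (S2). I will write down the natural $1$-Lipschitz collapse $\Phi_s\colon M_s\to P_1^\circ$. It is the identity off $K_s$, and on $K_s$ it folds each triangle $T(s)$ onto the segment. By construction it is distance-nonincreasing, and it is injective off $K_s$.

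\textbf{Verifying (RH1)--(RH3) for each $s\in(0,L]$.}

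(RH1) holds by construction.

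(RH2) Cone angles at points other than $A,B$ agree with those of $P_0^\circ$ or $P_1^\circ$, and are therefore $>2\pi$. At $C$ and $D$ the angle $\gamma(s)$ replaces $\beta$. At $A$ and $B$ the two angles $\delta(s)$ replace two copies of $\beta$. I will compare the cone angles with those in $P_1^\circ$: the cone angle at $E$ there is $2\pi+\Area(\text{merged face})$, and the angles at $A$ and $B$ together with $2\delta$ recover that count. Assumption (S1) is used here, since it guarantees that the merged face has the correct contribution. I will also use $\delta>\pi/2$ to push each angle above $2\pi$.

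(RH3) This is the main obstacle. Take a closed local geodesic $\lambda$ in $M_s$.
\begin{itemize}
\item If $\lambda$ avoids $K_s$, then $\Phi_s(\lambda)$ is a closed geodesic of $P_1^\circ$ of the same length, so its length is $>2\pi$.
\item If $\lambda$ meets $K_s$, then $\Phi_s(\lambda)$ is a closed curve of no greater length, but it may fail to be geodesic. I will follow Inoue and split into cases according to how $\lambda$ crosses $K_s$.
\begin{itemize}
\item A passage of type $AC,AB,AD$, its mirror, and their one-sided limits along edges or through vertices each cost more than $\pi$, by Lemma~\ref{lem:reflex-patch-crossing}.
\item Passages that enter and exit through the same triangle are short. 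They map to geodesic pieces in $P_1^\circ$, and the image curve, after straightening, is a closed curve there. Hypothesis (S3) excludes exactly the short closed geodesics of length about $2\pi$ that would run through $E$ along a prismatic $4$-circuit.
\end{itemize}
\item When $\lambda$ contains one reflex crossing, I get $\pi$ from that crossing plus at least $\pi$ from a star passage (Lemma~\ref{lem:ordinary-star-passage}) in $P_1^\circ$ around $C$ or $D$ or elsewhere. This adds up to more than $2\pi$.
\end{itemize}
I expect the bookkeeping of one-sided carriers and the final comparison showing that the collapse image really runs through the actual polar cellulation of $P_1^\circ$ to be the delicate part.

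\textbf{Conclusion.} Theorem~\ref{thm:rhs} gives compact polyhedra $P_t$ with $\theta_e=t$ and all other angles equal to $\alpha$; for $t\in[\alpha,\pi)$ the combinatorics are fixed. Uniqueness in Theorem~\ref{thm:rhs} gives continuity in $t$. The metrics $M_s$ converge to $P_1^\circ$, so $P_t\to P_1$ as $t\to\pi$, with volume continuous in the limit.

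By the Schläfli formula (Theorem~\ref{thm:schlafli}),
\[
\frac{d}{dt}\Vol(P_t)=-\tfrac12\, l_e(t)<0,
\]
since $l_e(t)>0$ for $t<\pi$. Integrating over $[\alpha,\pi)$ and passing to the limit yields $\Vol(P_0)>\Vol(P_1)$.
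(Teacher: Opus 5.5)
Your outline has the paper's architecture: the candidate metric \(M_s\), the \(1\)-Lipschitz collapse onto \(P_1^\circ\), Theorem~\ref{thm:rhs}, Lemma~\ref{lem:reflex-patch-crossing}, and Schl\"afli. But the verification of (RH3) is missing its key mechanism, and the substitutes you propose fail.

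The paper uses the cone points \(X,Y\) dual to the faces opposite \(e\) across \(A\) and \(B\). The ordinary triangles adjacent to \(K_s\) across \(CA,AD\) are then \(CAX,DAX\), and those across \(CB,BD\) are \(CBY,DBY\). The absence of prismatic \(3\)-circuits gives \(X\neq Y\). Hypothesis (S3) is used exactly to show that \(XY\) is not a polar edge, so \(\operatorname{ost}(X)\cap\operatorname{ost}(Y)=\varnothing\). These stars contain at most one of \(A,B\), so the collapse maps them isometrically onto stars of \(P_1^\circ\), where Lemma~\ref{lem:ordinary-star-passage} gives length \(>\pi\) for every complete passage. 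Consequently:
\begin{itemize}
\item a component of \(\Gamma\cap K_s\) with outer sides \(\{CA,CB\}\), \(\{AD,BD\}\), \(\{CA,BD\}\) or \(\{AD,CB\}\) forces two disjoint passages, through \(\operatorname{st}(X)\) and \(\operatorname{st}(Y)\);
\item a reflex component \(CA\to AB\to AD\) gives \(>\pi\) inside \(K_s\) plus \(>\pi\) through \(\operatorname{st}(X)\).
\end{itemize}

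Your substitutes fail at concrete points:
\begin{itemize}
\item The collapse folds \(CAB\) into the edge \(CE\) by \(p\mapsto\iota_C(\min\{d(C,p),L\})\). A passage from \(CA\) to \(CB\) is therefore sent into \(CE\), with corners where it meets \(CEX\) and \(CEY\). It is not, in general, a geodesic piece.
\item ``Straightening'' the image closed curve gives no contradiction. (RH3) constrains only closed local geodesics, and in the \(\operatorname{CAT}(1)\) space \(P_1^\circ\) short closed curves can simply be contracted.
\item The crossings \(CA\to AB\to BD\) and \(AD\to AB\to CB\) fall under none of your cases.
\item A star passage ``around \(C\) or \(D\)'' cannot be added to the reflex crossing. \(\operatorname{st}(C)\) contains the special triangle \(CAB\) that the crossing traverses, so the two passages overlap. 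Moreover, this star is not isometric to a star of \(P_1^\circ\), since its cone angle differs by \(\gamma\), so Lemma~\ref{lem:ordinary-star-passage} does not transfer to it.
\item The claim that (S3) ``excludes exactly'' the short geodesics through \(E\) is a heuristic standing in for the argument above.
\end{itemize}
The non-transverse carriers also remain undone. These are carriers through \(A\) or \(B\), along \(AB\), and through \(C\) or \(D\) with two exterior germs; for the last, the paper compares the two link angles at \(C\) before and after the collapse removes the sector \(\gamma\).

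Second, Theorem~\ref{thm:rhs} only produces a polyhedron \(Q_s\) whose polar metric is \(M_s\). It does not say that the triangulation you glued is the actual polar cellulation of \(Q_s\). So you cannot yet conclude that \(Q_s\) has the combinatorics of \(P_0\), angle \(t\) at \(e\) and \(\alpha\) elsewhere, which the Schl\"afli step needs. The delicate comparison is with the cellulation of \(Q_s\), not of \(P_1^\circ\). The paper proves the identification in three steps:
\begin{enumerate}
\item Pulling back through \(f\) the dichotomy in \(P_1^\circ\) (adjacent cone points at distance \(L\), nonadjacent ones at distance \(>\pi\)) shows every actual polar edge is a prescribed edge or one of \(AY,BX\).
\item An omitted prescribed edge would create a face angle \(2\beta\), \(2\delta(s)\) or \(\beta+\delta(s)\), all \(>\pi\), contradicting convexity.
\item The arcs \(AY,BX\) are localized as flip arcs and excluded by a crossing argument.
\end{enumerate}

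Smaller points:
\begin{itemize}
\item For \(s<L\) the cone angle at \(C\) and \(D\) agrees with neither \(P_0^\circ\) nor \(P_1^\circ\). (RH2) there comes from \(\omega_{M_s}(C)=\omega_{P_1^\circ}(C)+\gamma\) together with (S2).
\item At \(A\) and \(B\), (S1) gives directly \(\omega=2\delta+2\beta>2\pi\); no comparison with \(E\) is needed.
\item Uniqueness alone gives neither continuity in \(t\) nor \(Q_s\to P_1\) while \(A\) and \(B\) merge. The paper uses Schlenker's compactness lemma and excludes an ideal limit vertex before invoking uniqueness and continuity of volume.
\end{itemize}
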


\begin{proof}
We give the polar construction in detail.  From now on the same capital letter
denotes a face of the primal polyhedron and the corresponding vertex of the
polar metric.  Put
\[
        s=\pi-t,\qquad L=\pi-\alpha .
\]
The two polar triangles at the endpoints of \(e\) are
\[
        T_C=CAB,\qquad T_D=DAB.
\]
They have the common side \(AB\) of length \(s\), while
\[
        |CA|=|CB|=|DA|=|DB|=L.
\]
Let
\[
        K_s=T_C\cup_{AB}T_D .
\]
Thus \(K_s\) is the union of two isosceles spherical triangles, not a bigon
unless \(L=\pi/2\).  This is precisely where the present argument differs from
Inoue's right-angled proof.

After \(e\) is deleted, the faces \(A\) and \(B\) merge to a face \(E\).
The polar metric \(P_1^\circ\) is cut along the two-edge path \(CED\), with
both edges of length \(L\).  Inserting \(K_s\) between the two copies of that
path separates \(E\) into \(A\) and \(B\).  Denote the resulting cone metric
by \(M_s\).  Every old polar edge has length \(L\), and the single new edge
\(AB\) has length \(s\).  Consequently its prescribed primal angles are
\(\alpha\) at every edge other than \(e\), and \(\pi-s=t\) at \(e\).

At this stage \(M_s\) is only a candidate polar metric.  In particular, no
statement which assumes that a metric is already the polar of a hyperbolic
polyhedron will be applied directly to \(M_s\).  Assumption \textup{(S2)}, on
the other hand, says that \(P_1\) is an actual compact equiangular polyhedron;
therefore its polar \(P_1^\circ\) is available throughout the argument.

As in Inoue's proof, there is a natural length-nonincreasing collapse map
from the metric before the surgery to the polar of the surgered polyhedron.
The formula for the map is slightly different because \(K_s\) is not a
right-angled bigon.  Parametrize the two edges of the path \(CED\subset
P_1^\circ\) by unit speed,
\[
 \iota_C\colon[0,L]\longrightarrow CE,
 \qquad
 \iota_D\colon[0,L]\longrightarrow DE,
\]
where
\[
 \iota_C(0)=C,\quad \iota_C(L)=E,
 \qquad
 \iota_D(0)=D,\quad \iota_D(L)=E.
\]
For \(p\in T_C\) and \(p\in T_D\), respectively, put
\begin{equation}\label{eq:collapse-map-on-patch}
 \begin{aligned}
 f(p)&=\iota_C\bigl(\min\{d_{T_C}(C,p),L\}\bigr),
       &&p\in T_C,\\
 f(p)&=\iota_D\bigl(\min\{d_{T_D}(D,p),L\}\bigr),
       &&p\in T_D.
 \end{aligned}
\end{equation}
These two definitions agree on \(AB\).  Use the coordinates from
Lemma~\ref{lem:reflex-patch-crossing}.  A point \(p\in AB\) has the form
\[
        p=(\cos u,\sin u,0),\qquad |u|\le \frac{s}{2}.
\]
Since \(r=\cos(s/2)\), \(c=\cos L<0\), and \(\cos u\ge r\), we have
\[
 \cos d(C,p)=\frac{c}{r}\cos u\le c=\cos L.
\]
Hence \(d(C,p)\ge L\).  The same calculation with \(D\) gives
\(d(D,p)\ge L\).  Thus both formulas in
\eqref{eq:collapse-map-on-patch} send all of \(AB\) to \(E\).  On the four
outer sides they give the natural identifications
\[
        CA,CB\longrightarrow CE,
        \qquad
        DA,DB\longrightarrow DE.
\]
Consequently they extend, by the natural cellwise isometry outside \(K_s\),
to a continuous map
\begin{equation}\label{eq:collapse-map}
        f\colon M_s\longrightarrow P_1^\circ .
\end{equation}

The map \(f\) is \(1\)-Lipschitz.  On \(T_C\), the function
\(p\mapsto d(C,p)\) is \(1\)-Lipschitz, truncation at \(L\) does not increase
distances, and \(\iota_C\) is parametrized by arclength.  The same argument
applies on \(T_D\), while on every unchanged triangle \(f\) is an isometry.
Subdividing any rectifiable path into pieces contained in these triangles
shows that its image has no greater length.  In particular,
\begin{equation}\label{eq:collapse-map-length}
        \ell(f\circ\eta)\le \ell(\eta)
\end{equation}
for every rectifiable path \(\eta\subset M_s\).  When \(L=\pi/2\), formula
\eqref{eq:collapse-map-on-patch} is exactly Inoue's projection of the
spherical bigon onto its leaf space.

We verify the three conditions of Theorem~\ref{thm:rhs}.

\emph{Condition \textup{(RH1)}.}
The metric \(M_s\) is obtained by gluing finitely many spherical triangles.
It therefore has curvature \(+1\) away from its finitely many vertices.

\emph{Condition \textup{(RH2)}.}
Let \(\delta=\delta(s)\), \(\gamma=\gamma(s)\), and let \(\beta\) be as in
\eqref{eq:polar-L-beta}.  Cone angles away from \(A,B,C,D\) do not change.
Because \(A\) and \(B\) are quadrilaterals, the star of either one contains the
two special triangles and two unchanged equilateral triangles.  Hence
\[
        \omega(A)=\omega(B)=2\delta+2\beta
        >\pi+\pi=2\pi .
\]
At \(C\) and \(D\), deleting \(K_s\) removes exactly the positive angle
\(\gamma\).  Therefore
\[
        \omega_{M_s}(C)=\omega_{P_1^\circ}(C)+\gamma>2\pi,
        \qquad
        \omega_{M_s}(D)=\omega_{P_1^\circ}(D)+\gamma>2\pi .
\]
All cone angles of \(M_s\) are thus strictly greater than \(2\pi\).

\emph{Condition \textup{(RH3)}.}
We first make explicit the vertices denoted by \(X\) and \(Y\).  Since \(A\)
is a quadrilateral, its four neighboring faces occur cyclically as
\[
        B,\ C,\ X,\ D.
\]
Equivalently, the two ordinary polar triangles adjacent across \(CA\) and
\(DA\) are \(CAX\) and \(DAX\).  The face \(X\) is the face across the edge of
\(A\) opposite \(e\).  It may be a quadrilateral, but it may also be the large
base face created by a crossed prism gluing.  Accordingly,
\(\operatorname{st}(X)\) may contain any number \(m\ge4\) of triangles.  The
vertex \(Y\) is defined in exactly the same way from the cyclic list
\[
        A,\ C,\ Y,\ D
\]
of faces adjacent to \(B\).  Assumption \textup{(S3)} implies that \(X\ne Y\)
and that \(X\) and \(Y\) are not adjacent.  Indeed, equality would give a
prismatic \(3\)-circuit, which is impossible for an equiangular polyhedron
with \(3\alpha>\pi\), while an edge \(XY\) would close a prismatic
\(4\)-circuit containing \(e\), contrary to \textup{(S3)}.  No assumption on
the valences of \(X\) and \(Y\) will be used.

\begin{figure}[H]
\centering
\resizebox{0.87\linewidth}{!}{%
\begin{tikzpicture}[line cap=round,line join=round,
  primal/.style={black,line width=0.8pt},
  polar/.style={red!80!black,line width=0.9pt,dashed},
  pvertex/.style={circle,fill=black,inner sep=1.35pt},
  dvertex/.style={circle,fill=red!80!black,inner sep=1.45pt}]
  \coordinate (ul) at (-2,2);
  \coordinate (um) at (0,2);
  \coordinate (ur) at (2,2);
  \coordinate (ll) at (-2,-2);
  \coordinate (lm) at (0,-2);
  \coordinate (lr) at (2,-2);
  \draw[primal] (ul)--(ll)--(lm)--(lr)--(ur)--(um)--(ul);
  \draw[primal,line width=1.2pt] (um)--(lm);
  \draw[primal] (ul)--(-3.5,3.5) (ll)--(-3.5,-3.5)
                (ur)--(3.5,3.5) (lr)--(3.5,-3.5);
  \foreach \p in {(ul),(um),(ur),(ll),(lm),(lr)}
    \node[pvertex] at \p {};

  \coordinate (A) at (-1,0);
  \coordinate (B) at (1,0);
  \coordinate (C) at (0,4);
  \coordinate (D) at (0,-4);
  \coordinate (X) at (-6,0);
  \coordinate (Y) at (6,0);
  \draw[polar] (A)--(B) (A)--(C)--(B) (A)--(D)--(B)
               (X)--(C)--(Y)--(D)--cycle (X)--(A) (B)--(Y);
  \foreach \p in {(A),(B),(C),(D),(X),(Y)}
    \node[dvertex] at \p {};
  \node[above left=1pt] at (A) {$A$};
  \node[above right=1pt] at (B) {$B$};
  \node[above=2pt] at (C) {$C$};
  \node[below=2pt] at (D) {$D$};
  \node[left=2pt] at (X) {$X$};
  \node[right=2pt] at (Y) {$Y$};
  \node[right=2pt] at (0,0.75) {$e$};
\end{tikzpicture}%
}
\caption{The neighborhood of the deleted edge and the corresponding polar
triangulation. The primal one-skeleton is solid; the polar edges are dashed
and drawn with heavier line weight. The faces \(A\) and \(B\) meet along \(e\), while
\(C,D\) are the third faces at its endpoints. The vertices \(X,Y\) are dual
to the faces opposite \(e\) across \(A,B\), respectively. The two special
polar triangles are \(CAB\) and \(DAB\).}
\label{fig:edge-polar-neighborhood}
\end{figure}

Put
\[
        S_X=\operatorname{ost}(X),\qquad
        S_Y=\operatorname{ost}(Y).
\]
These open stars are disjoint.  Indeed, the relative interiors of distinct
simplices in a triangulation are disjoint.  Hence a point of
\(S_X\cap S_Y\) would lie in the relative interior of one simplex containing
both \(X\) and \(Y\), so that \(XY\) would be a polar edge.  This is exactly
the adjacency excluded above.

\emph{Transported star estimate.}
Let \(Z\) be a cone point of \(M_s\) whose star contains at most one of
\(A,B\); in particular, one may take \(Z=X\) or \(Z=Y\). Let \(\lambda\) be a
local geodesic in \(M_s\), and let
\(\eta\) be the closure of a component of
\(\lambda\cap\operatorname{ost}_{M_s}(Z)\).  If the endpoints of \(\eta\)
lie on \(\partial\operatorname{st}_{M_s}(Z)\), then
\begin{equation}\label{eq:transported-star-estimate}
        \ell(\eta)>\pi .
\end{equation}
Indeed, no triangle in \(\operatorname{st}_{M_s}(Z)\) contains both \(A\)
and \(B\). Deleting \(e\) therefore changes this star, if at all, only by
replacing one of \(A,B\) with the merged vertex \(E\). The cellwise part of \(f\)
maps its triangles bijectively, with the same gluings and the same side lengths
\(L\), onto \(\operatorname{st}_{P_1^\circ}(f(Z))\).  Thus
\[
 f|_{\operatorname{st}_{M_s}(Z)}\colon
 \operatorname{st}_{M_s}(Z)\longrightarrow
 \operatorname{st}_{P_1^\circ}(f(Z))
\]
is an intrinsic isometry. Consequently \(f(\eta)\) is a local geodesic
passage through the corresponding star in \(P_1^\circ\), and
\(\ell(f(\eta))=\ell(\eta)\).  Since \(P_1\) exists by \textup{(S2)} and is
non-obtuse, Lemma~\ref{lem:ordinary-star-passage} applies in
\(P_1^\circ\) and gives \eqref{eq:transported-star-estimate}.  Notice that
the lemma has not been applied to \(M_s\) itself.

For a fixed local carrier the same strict inequality holds for a nonconstant
one-sided limit of complete star passages.  Indeed, after development the
possible endpoints lie in the compact union of the boundary sides of the
star, and every such endpoint has negative scalar product with the developed
centre.  The equality configuration of Lemma~\ref{lem:ordinary-star-passage}
would require antipodal endpoints, which is therefore excluded also in the
limit.  We shall use this observation when a germ runs along an edge of the
triangulation.

Suppose, toward a contradiction, that \(M_s\) contains a closed local geodesic
\(\Gamma\) with \(\ell(\Gamma)\le2\pi\).  We first assume that \(\Gamma\) is
transverse to the polar triangulation.  It cannot be contained in \(K_s\).
Indeed, in the coordinates used in the proof of
Lemma~\ref{lem:reflex-patch-crossing}, the intersection of \(K_s\) with the
great circle \(z=0\) is the short arc \(AB\).  Every other great circle meets
\(z=0\) in two antipodal points, and those two points cannot both belong to
\(AB\), whose length is \(s<\pi\).  Thus \(K_s\) contains no complete great
circle.

If \(\Gamma\) misses \(\operatorname{int}K_s\), then \(f\) is a local
isometry along \(\Gamma\).  Indeed, transversality excludes a segment or a
tangency along \(\partial K_s\), while a transverse crossing of a boundary
side would enter \(\operatorname{int}K_s\).  Thus \(f(\Gamma)\) is a closed
local geodesic in
\(P_1^\circ\).  By \textup{(RH3)} for \(P_1^\circ\) and
\eqref{eq:collapse-map-length},
\[
        \ell(\Gamma)\ge \ell(f(\Gamma))>2\pi,
\]
a contradiction.  Hence
\(\Gamma\cap\operatorname{int}K_s\ne\varnothing\).  Let \(\kappa\) be the
closure of a component of this intersection.  The boundary sides of \(K_s\),
in cyclic order, are
\[
        CA,\ AD,\ DB,\ BC.
\]
The endpoints of \(\kappa\) cannot lie on the same boundary side.  After
development, \(\kappa\) and that side lie on great circles.  Two distinct
great circles meet in an antipodal pair, whereas every boundary side has
length \(L<\pi\); equality of the two great circles is excluded by
transversality.  Thus there are six unordered pairs of boundary sides.  Four
are the nonexceptional pairs
\begin{equation}\label{eq:nonexceptional-boundary-pairs}
 \{CA,CB\},\quad \{AD,BD\},\quad
 \{CA,BD\},\quad \{AD,CB\}.
\end{equation}
Here and below, a \emph{complete passage through the star of \(Z\)} means the
closure of a component of
\(\Gamma\cap\operatorname{ost}(Z)\); its endpoints lie on
\(\partial\operatorname{st}(Z)\).

At an endpoint on \(CA\) or \(AD\), the exterior continuation of \(\Gamma\)
enters respectively the equilateral triangle \(CAX\) or \(DAX\), and hence
\(S_X\).  At an endpoint on \(CB\) or \(BD\), it enters \(S_Y\).
Consequently, if \(\kappa\) has one of the boundary pairs
in \eqref{eq:nonexceptional-boundary-pairs}, the exterior part of \(\Gamma\)
contains one complete passage through \(\operatorname{st}(X)\) and one
through \(\operatorname{st}(Y)\).  Their interiors lie in the disjoint sets
\(S_X,S_Y\), so the passages are disjoint.  The map \(f\) sends each passage
isometrically to a local geodesic segment in the corresponding unchanged
star of \(P_1^\circ\).  Thus the transported star estimate
\eqref{eq:transported-star-estimate} gives
\[
        \ell(\Gamma)>2\pi.
\]

It remains to consider the two exceptional boundary pairs.  Interchanging
\((A,X)\) with \((B,Y)\) if necessary, the chosen component has order
\begin{equation}\label{eq:same-side-component}
        CA\longrightarrow AB\longrightarrow AD.
\end{equation}
By Lemma~\ref{lem:reflex-patch-crossing},
\[
        \ell(\kappa)>\pi .
\]
Follow \(\Gamma\) from either endpoint of \(\kappa\) in the direction away
from \(K_s\).  It first enters \(CAX\) or \(DAX\), hence \(S_X\), and before
it can return to \(K_s\) it completes a passage through the unchanged closed
star of \(X\).  Its image under \(f\) is a local geodesic passage through the
corresponding star in \(P_1^\circ\), so
\eqref{eq:transported-star-estimate} gives length greater than \(\pi\) for this
passage.
It is disjoint from the interior of \(\kappa\), so
\[
        \ell(\Gamma)\ge \ell(\kappa)+\pi>2\pi,
\]
a contradiction.  Notice that this argument uses only the chosen component
\(\kappa\); it is immaterial whether \(\Gamma\cap\operatorname{int}K_s\) has
other components.  This is the same economy used in Inoue's proof.

It remains to remove the transversality assumption.  We use the same list of
degenerations as Inoue.  Only one additional local convention is needed for
our non-bigon patch.  Suppose that a passage in \(K_s\), not passing through
\(A\) or \(B\) as an interior point, is obtained from a transverse passage by
letting an intersection point reach an endpoint of a side or by letting the
supporting great circle approach a boundary side.  Develop the one or two
special triangles met by the passage.  Moving the developed great circle a
small amount from the chosen incident side gives transverse passages with the
same ordered sides; moving it back gives precisely a one-sided limit in the
sense defined before Lemma~\ref{lem:reflex-patch-crossing}.  This is a local
statement about the passage in \(K_s\), not a perturbation of the whole closed
geodesic.  Passages through \(A\) or \(B\) are handled directly by the vertex
part of that lemma.

Every portion of the geodesic in an unchanged open star is the closure of an
actual component of \(\Gamma\cap\operatorname{ost}(Z)\), so the transported
star estimate applies to it directly, without a limiting argument.  No global
perturbation of \(\Gamma\), and in particular no globally closed perturbed
geodesic, is asserted anywhere below.

We now list the carrier types.  This is exactly Inoue's five-part division
relative to the common edge, with the two opposite vertices \(C,D\) treated
separately because our patch is not a bigon.  At an interior point of \(AB\) the
metric is smooth.  A geodesic which is tangent to the great circle containing
\(AB\) coincides with it locally, by uniqueness of a spherical geodesic with
prescribed initial data.  Hence an isolated tangency to
\(\operatorname{int}AB\) is impossible: the passage either crosses
\(\operatorname{int}AB\) transversely or contains a nontrivial segment of
\(AB\).  In the latter case the same uniqueness forces it to follow \(AB\)
up to both endpoints.
In cases \textup{(i)}--\textup{(iv)} below the carrier avoids the two opposite
vertices \(C,D\); carriers through either of them are precisely case
\textup{(v)}.

\begin{enumerate}[label=\textup{(\roman*)},itemsep=0.35em,topsep=0.4em]
\item If the carrier misses \(AB\), its part in \(K_s\) lies in one special
triangle, with a boundary edge or vertex allowed as a one-sided limit.  The
cyclic orders of the incident triangles show that its two exterior
continuations determine complete passages through the unchanged stars of
\(X\) and \(Y\).

\item Suppose that the carrier crosses \(\operatorname{int}AB\)
transversely.  This repeats the local configuration from the transverse proof,
not its global hypothesis: the complete closed geodesic may still run along an
outer edge or pass through a vertex of the triangulation.  The outer sides form
one of
\[
 \{CA,AD\},\quad \{CB,BD\},\quad
 \{CA,BD\},\quad \{CB,AD\}.
\]
For the last two pairs the exterior continuations give complete passages
through both unchanged stars.  For the first two pairs the one-sided-limit
part of Lemma~\ref{lem:reflex-patch-crossing} gives a patch passage of length
greater than \(\pi\), and the exterior continuation gives a complete passage
through \(X\), respectively \(Y\), of length greater than \(\pi\).

\item Suppose that the carrier contains a nontrivial segment of \(AB\).
As observed above, it then contains all of \(AB\).  At either endpoint the
sector from \(AB\) into one special triangle has angle
\(\delta<\pi\), whereas the sector continuing into the adjacent ordinary
triangle has angle \(\delta+\beta>\pi\).  Local geodesicity therefore forces
the two continuations into the unchanged stars of \(X\) and \(Y\).

\item Suppose that the carrier meets \(AB\) only at \(A\) or only at \(B\).
There is no germ along \(AB\), since that would be case \textup{(iii)}.
If exactly one germ lies in \(K_s\), follow that germ through its special
triangle to the outer boundary of the patch. At \(A\), the exterior germ
lies in the open star of \(X\), whereas the patch germ exits through
\(BC\) or \(BD\) into the open star of \(Y\). The case at \(B\) is
symmetric. The excluded exits through \(C,D\) belong to case \textup{(v)}.
Thus the two complete passages lie in the disjoint open stars of \(X\)
and \(Y\), so their parameter intervals are disjoint and each has length
greater than \(\pi\).

If both germs stay in \(K_s\), the vertex part of
Lemma~\ref{lem:reflex-patch-crossing} gives a patch passage of length greater
than \(\pi\); its two outer ends supply one complete passage through the
opposite unchanged star.

If both germs are exterior, the geodesic only touches \(K_s\) at the vertex.
This contact is irrelevant if \(\Gamma\) has another carrier through the
patch.  If there is no such carrier, then all intersections with \(K_s\) are
isolated exterior vertex contacts of this type; a contact along a boundary
edge has already been assigned to a one-sided carrier.
The map \(f\) preserves the exterior sector, whose total angle is \(2\beta\).
If \(\theta\) is the angle between the two germs measured inside this sector,
local geodesicity gives \(\theta\ge\pi\).  After \(A,B\) are identified with
\(E\), the other angle between their images is \(4\beta-\theta\ge2\beta>
\pi\).  Applying this at every contact shows that \(f(\Gamma)\) is still
locally geodesic, and
\textup{(RH3)} for \(P_1^\circ\), together with the \(1\)-Lipschitz property
of \(f\), gives \(\ell(\Gamma)>2\pi\) directly.

\item Finally, suppose that the carrier passes through \(C\) or \(D\); take
\(C\) for definiteness. Two germs contained in \(K_s\) subtend the special
angle \(\gamma<\pi\), so they cannot form a local geodesic. Hence at least one
germ is exterior.

Suppose first that exactly one germ is exterior.  If it follows \(CA\) or
\(CB\), it reaches \(A\) or \(B\) and is already covered by case
\textup{(iv)}.  Otherwise the ordinary triangle containing it has a vertex
\(Z\notin\{A,B,C,D\}\), and the adjacent portion of
\(\Gamma\) is the closure of a component of
\(\Gamma\cap\operatorname{ost}(Z)\) ending at \(C\).  The transported star
estimate gives length greater than \(\pi\) for its complete passage through
\(\operatorname{st}(Z)\).  The other germ leaves \(ABC\) through the opposite
side \(AB\). If it reaches \(A\) or \(B\), this is case \textup{(iv)}.
Otherwise it crosses \(\operatorname{int}AB\) and leaves the other special
triangle through \(AD\) or \(BD\). It is therefore a nonconstant one-sided
limit of a same-side passage in
Lemma~\ref{lem:reflex-patch-crossing}, and its length is greater than \(\pi\).
The interior of this patch passage lies in \(K_s\), whereas
\(\operatorname{ost}(Z)\) contains no special triangle, so the two selected
passages have disjoint interiors.

It remains to treat two exterior germs.  This is the point at which merely
choosing an arbitrary star for each germ would be insufficient: the resulting
complete passages could overlap later on the parameter circle.  Instead we use
the effect of the collapse on the link of \(C\).  In \(P_1^\circ\), the
exterior fan at \(C\) consists of \(m\ge4\) ordinary equilateral triangles,
each contributing the angle \(\beta\); the inequality \(m\ge4\) follows from
the same triangular-face argument used in the proof of
Theorem~\ref{thm:atkinson-deformation}.  Its two end triangles are \(CBY\) and
\(CAX\).  Let \(a,b\ge0\) be the angular distances, within this exterior fan,
from the two germs to the boundary rays \(CB\) and \(CA\), respectively.  The
two link angles between the germs in \(M_s\) are
\begin{equation}\label{eq:two-exterior-link-angles}
        m\beta-a-b,
        \qquad
        a+b+\gamma,
\end{equation}
where the second angle is the one containing the inserted special sector.
Both are at least \(\pi\), by local geodesicity of \(\Gamma\).

The collapse removes precisely the angle \(\gamma\).  Thus the corresponding
angles between the image germs in \(P_1^\circ\) are
\[
        m\beta-a-b,
        \qquad a+b.
\]
If \(a+b\ge\pi\), then \(f(\Gamma)\) remains locally geodesic at this contact.
If \(a+b<\pi\), then
\[
        a<\pi<2\beta,
        \qquad b<\pi<2\beta.
\]
If \(a=0\), the corresponding germ follows the geodesic edge \(CB\) up to
\(B\), by uniqueness of a spherical geodesic with prescribed initial data,
and the carrier is already case \textup{(iv)}.  Similarly, \(b=0\) leads to
case \textup{(iv)} at \(A\).  We may therefore assume \(a,b>0\).
The first two triangles of the exterior fan starting at \(CB\) are both
contained in \(\operatorname{st}(Y)\), and the first two starting at \(CA\)
are both contained in \(\operatorname{st}(X)\).  Hence the two germs determine
actual complete passages through \(S_Y\) and \(S_X\), respectively.  These
passages have length greater than \(\pi\), and their interiors are disjoint
because \(S_X\cap S_Y=\varnothing\).  This gives
\(\ell(\Gamma)>2\pi\).

The same argument applies at \(D\).  The one-sided convention preceding
Lemma~\ref{lem:reflex-patch-crossing} handles the remaining coincidences of a
carrier with an edge of the triangulation.
\end{enumerate}

Whenever the argument in cases \textup{(i)}--\textup{(iv)} uses two star
passages, their interiors lie in the disjoint open stars \(S_X,S_Y\), except
in the one-germ subcase of \textup{(iv)}.  In that subcase they are distinct
parameter components separated by the patch segment, so their interiors are
again disjoint.  Whenever the argument uses a patch passage and a star
passage, their interiors lie respectively in \(K_s\) and in an unchanged
open star, and are therefore disjoint.

It remains only to assemble the last subcase of \textup{(v)} globally.  If at
some two-exterior contact one has \(a+b<\pi\), the two disjoint \(X\)- and
\(Y\)-passages found above already give \(\ell(\Gamma)>2\pi\).  Otherwise
\(a+b\ge\pi\) at every such contact.  If none of the earlier cases has already
given the required contradiction, \(\Gamma\) meets \(K_s\) only in these
exterior contacts, and the link calculation shows that \(f(\Gamma)\) is a
closed local geodesic in \(P_1^\circ\).  Condition \textup{(RH3)} there and
the \(1\)-Lipschitz property of \(f\) give
\[
        \ell(\Gamma)\ge\ell(f(\Gamma))>2\pi.
\]
Thus every carrier type leads to a contradiction, proving \textup{(RH3)}.

Theorem~\ref{thm:rhs} now realizes \(M_s\) as the polar metric of a unique
compact convex hyperbolic polyhedron; denote it by \(Q_s\).  There is one
point which must still be checked.  The metric \(M_s\) was built with a
prescribed triangulation, but Theorem~\ref{thm:rhs} by itself does not say
that this triangulation is the actual polar cellulation of \(Q_s\).

We identify the two cellulations exactly as in the last part of Inoue's proof
of geometric edge surgery \cite[proof of Theorem~8.2]{Inoue2008}. Let
\(\mathcal C\) denote the triangulation used to construct \(M_s\), and let
\(\mathcal D\) be the actual polar cellulation of \(Q_s\). The latter has the
following three properties:
\begin{enumerate}[label=\textup{(C\arabic*)},itemsep=0.15em,topsep=0.3em]
\item its vertices are exactly the cone points of \(M_s\);
\item its edges are geodesic arcs of length strictly less than \(\pi\);
\item every interior angle of a two-cell at a cone point is strictly less than
\(\pi\).
\end{enumerate}
The first property is part of the polar construction. The second follows
because a polar edge has length \(\pi-\theta\), where
\(0<\theta<\pi\) is a dihedral angle of the convex polyhedron. The third is
the polar form of the strict convexity of its faces: a polar face angle is the
complement of the corresponding face angle of the primal polyhedron. The
prescribed triangulation \(\mathcal C\) also has these properties, since its
edge lengths are \(s,L<\pi\) and its face angles are
\(\beta,\delta(s),\gamma(s)\in(0,\pi)\).

We next record the metric observation which replaces the right-angled
dichotomy used by Inoue. In the known polar \(P_1^\circ\), two distinct cone
points are either joined by a polar edge and have distance \(L\), or their
distance is strictly greater than \(\pi\). To prove the second assertion, let
\(\sigma\) be a shortest path between nonadjacent cone points \(U,V\). Follow
\(\sigma\) from \(U\) up to its first intersection with
\(\partial\operatorname{st}(U)\). This initial subpath is minimizing and lies
in \(\operatorname{st}(U)\). Develop the triangles which it traverses while
keeping the developed image of \(U\) fixed. Its endpoint lies on a side
opposite \(U\) in an equilateral spherical triangle of side length
\(L>\pi/2\). The scalar-product argument in the proof of
Lemma~\ref{lem:ordinary-star-passage} therefore shows that this initial
subpath has length strictly greater than \(\pi/2\). Applying the same argument
backwards from \(V\) gives a terminal subpath of length strictly greater than
\(\pi/2\). Since \(U,V\) are nonadjacent, their open stars are disjoint, so
the interiors of these two subpaths are disjoint. Hence
\(\ell(\sigma)>\pi\). Notice that this is an estimate for the developed
initial and terminal subpaths of \(\sigma\), not an assertion about a
possibly different global shortest path to a boundary point of a star.

On the other hand, \(P_1^\circ\) is a
\(\operatorname{CAT}(1)\) space: it is locally \(\operatorname{CAT}(1)\), and
\textup{(RH3)} rules out closed local geodesics of
length at most \(2\pi\). Hence every local geodesic of length less than
\(\pi\) is the unique minimizing segment between its endpoints. In
particular, for adjacent cone points the polar edge is minimizing and their
distance is exactly \(L\).

Now let \(U,V\) be cone points of \(M_s\) with
\(d_{M_s}(U,V)<\pi\). Since \(f\) is \(1\)-Lipschitz,
\[
 d_{P_1^\circ}(f(U),f(V))
 \le d_{M_s}(U,V)<\pi.
\]
Thus either \(f(U)=f(V)\), or \(f(U)\) and \(f(V)\) are adjacent in
\(P_1^\circ\). The only two cone points identified by \(f\) are \(A,B\), and
they are joined by the edge \(AB\) of \(\mathcal C\). Every edge of
\(P_1^\circ\) not incident to \(E=f(A)=f(B)\) has one lift, which is an edge
of \(\mathcal C\). The four edges incident to \(E\) have endpoints
\(C,D,X,Y\). Their lifts are
\[
 AC,BC,AD,BD,AX,BY,
\]
together with the two possible additional pairs
\begin{equation}\label{eq:possible-extra-polar-edges}
        AY,\qquad BX.
\end{equation}
The metric \(M_s\) itself is now also known to be
\(\operatorname{CAT}(1)\), by \textup{(RH1)}--\textup{(RH3)}. Hence a
geodesic segment of length less than \(\pi\) with prescribed endpoints is
unique. Whenever the endpoint pair above is joined by an edge of
\(\mathcal C\), an edge of \(\mathcal D\) with the same endpoints must
therefore be that very geodesic arc.
It follows that an edge of \(\mathcal D\) is either an edge of
\(\mathcal C\), or one of the two arcs in
\eqref{eq:possible-extra-polar-edges}. This is precisely the role of the map
\(f\) in Inoue's argument: the distance classification is made in the known
polar \(P_1^\circ\), not in the candidate metric \(M_s\), and is then pulled
back by a length-nonincreasing map.

We now recover the edges of \(\mathcal C\), in the same order as in Inoue's
link argument. Suppose that a prescribed edge is absent from \(\mathcal D\).
Unless another \(\mathcal D\)-edge enters the sector between the two
prescribed triangles incident to it, those triangles belong to one
\(\mathcal D\)-face. At a suitable endpoint that face has angle at least
\[
\begin{array}{c|c}
\text{missing prescribed edge}&\text{resulting face angle}\\
\hline
\text{an unchanged edge}&2\beta,\\
AB&2\delta(s),\\
AC,AD,BC,\text{ or }BD&\beta+\delta(s).
\end{array}
\]
All three lower bounds are strictly greater than \(\pi\), by
\eqref{eq:delta-gamma-bounds} and \eqref{eq:L-beta-ranges}, contradicting
\textup{(C3)}.

It remains only to check that an exceptional arc from
\eqref{eq:possible-extra-polar-edges} cannot evade this conclusion. We first
localize such an arc; this is the extra step which is unnecessary in the
right-angled bigon of Inoue. Suppose that \(\sigma=AY\) is an edge of
\(\mathcal D\). At \(A\), the four sectors of \(\mathcal C\) are the
triangles
\[
        ABC,\qquad ACX,\qquad AXD,\qquad ADB.
\]
The arc \(\sigma\) cannot initially follow an edge of \(\mathcal C\): two
local geodesics of length less than \(\pi\) with the same initial data agree,
and then \(\sigma\) would either have the wrong endpoint or contain a third
cone point in its interior. If \(\sigma\) leaves through \(ACX\), it must
cross the opposite side \(CX\) before reaching \(Y\). The crossing is in the
interior of \(CX\), for an edge of a cell decomposition cannot contain a
third cone point. If \(CX\in\mathcal D\), two \(\mathcal D\)-edges cross,
which is impossible. If \(CX\notin\mathcal D\), the two ordinary triangles
incident to \(CX\) lie in one \(\mathcal D\)-face near \(C\), unless a new
\(\mathcal D\)-edge issues from \(C\) into that sector. No such edge exists:
every possible non-prescribed edge is \(AY\) or \(BX\), and neither is
incident to \(C\). The resulting face angle at \(C\) is therefore at least
\(2\beta>\pi\), contrary to \textup{(C3)}. The sector \(AXD\) is excluded in
the same way, using the endpoint \(D\) of the opposite side \(XD\).

Thus \(AY\) leaves \(A\) through \(ABC\) or \(ABD\). In the first case it
crosses \(BC\) and enters \(BCY\); in the second it crosses \(BD\) and enters
\(BDY\). The subarc after this crossing is the unique geodesic segment of
length less than \(\pi\) to \(Y\), so it remains in the corresponding strictly
convex spherical triangle. Hence \(AY\) is one of the two local flip arcs
crossing \(BC\) or \(BD\). Symmetrically, an exceptional edge \(BX\) is a
local flip arc crossing \(AC\) or \(AD\).

Now suppose, for example, that \(AY\) crosses \(BC\). Then \(BC\) is absent
from \(\mathcal D\), and at \(B\) the merged sector has angle
\(\delta(s)+\beta>\pi\). The only possible non-prescribed edge which can
subdivide it is \(BX\). Its germ at \(B\) must lie in the triangle \(ABC\),
so the localization just proved shows that it crosses \(AC\). Both arcs then
lie in the topological disc
\[
        ABC\cup ACX\cup BCY,
\]
whose boundary contains \(A,X,Y,B\) in this cyclic order. The arcs \(AY\)
and \(BX\) therefore have alternating endpoints and must cross in their
interiors, impossible for two edges of \(\mathcal D\). The three remaining
possibilities are identical, using the corresponding upper or lower
three-triangle disc. Consequently an exceptional edge cannot subdivide every
forbidden sector created by an omitted prescribed edge. Some
\(\mathcal D\)-face would then have one of the three angles displayed above,
contrary to \textup{(C3)}. Hence every edge of \(\mathcal C\) is an edge of
\(\mathcal D\).

The only possible additional edges of \(\mathcal D\) are therefore \(AY\)
and \(BX\). Either such arc joins two nonadjacent vertices of the triangulation
\(\mathcal C\), and hence meets its \(1\)-skeleton away from its endpoints.
It cannot pass through a third cone point in the interior of an edge of
\(\mathcal D\); therefore it crosses an edge of \(\mathcal C\) in an
interior point. Since all edges of \(\mathcal C\) have just been shown to belong to
\(\mathcal D\), this is impossible for a cell decomposition. Thus
\[
        \mathcal C=\mathcal D.
\]
This is the same final combinatorial argument as in Inoue; the only changes
are that the ordinary polar edge length is \(L\), rather than \(\pi/2\), and
that the distance dichotomy is transferred through the map \(f\).

Therefore \(Q_s\) has the prescribed combinatorial type, its angle at \(e\)
is \(t=\pi-s\), and all its other angles are \(\alpha\). For fixed
combinatorics, the metrics \(M_s\) depend smoothly on \(s>0\), and, exactly
as in Inoue's proof of geometric edge surgery
\cite[proof of Theorem~8.2]{Inoue2008}, the realizations \(Q_s\) form a
one-parameter family. At \(s=L\), all angles of \(Q_L\) equal \(\alpha\), and
the recovered cellulation is that of \(P_0\); hence Andreev's uniqueness
gives \(Q_L=P_0\) up to isometry. Set
\[
        P_t=Q_{\pi-t},\qquad \alpha\le t<\pi.
\]
As \(s\to0\), the polar metric \(M_s\) converges to \(P_1^\circ\): the edge
\(AB\) contracts, \(A\) and \(B\) coalesce to \(E\), and the two special
triangles collapse to the path \(CED\). This convergence follows from the
finite spherical gluing construction, with all other triangles unchanged.
To pass from polar metrics to polyhedra while the cone points merge, apply
the compactness lemma \cite[Lemma~6.3]{SchlenkerHyperideal} with the underlying
manifold a ball. The \(Q_s\), for \(s>0\), have fixed combinatorics and no
ideal vertices, and their third fundamental forms are precisely \(M_s\).
The limiting spherical cone metric \(P_1^\circ\) has every closed local
geodesic longer than \(2\pi\). Thus the lemma gives subsequential convergence
up to isometry. The limit is compact: an ideal vertex would contribute a
polar hemisphere with geodesic boundary of length \(2\pi\).
Hodgson--Rivin uniqueness identifies this limit with \(P_1\), so
\(Q_s\to P_1\) up to isometry as \(s\to0\). After normalization, the
polyhedra lie in a compact subset of the Klein ball and converge there in
Hausdorff distance. The hyperbolic volume density is smooth on that subset,
and hence \(\Vol(Q_s)\to\Vol(P_1)\).

Finally \(t=\pi-s\), and the Schl\"afli formula gives
\[
        \frac{d}{dt}\Vol(P_t)=-\frac12\ell_e(t)<0.
\]
Integration from \(t=\alpha\) to the limiting value \(t=\pi\) proves
\(\Vol(P_0)>\Vol(P_1)\).
\end{proof}

\subsection{Successive removal of leaves}\label{subsec:graph-collapse}

Choose any leaf of the current prismatic tree and shorten its free chain by
successive seam deletions. Once the leaf has become a \(4\)-prism, it is
absorbed combinatorially by the adjacent prism. We then recompute the reduced
prismatic tree of the resulting graph-type polyhedron and repeat the operation.

\begin{lemma}[Removal of one leaf]\label{lem:leaf-collapse}
Let the current reduced prismatic tree have more than one vertex and let
\(D\) be one of its leaves.  Then \(D\) is an \(n\)-prism, \(n\ge5\), and
\(n-4\) direct equiangular edge surgeries reduce it to a \(4\)-prism, which is
then absorbed combinatorially by the adjacent prism. Every surgery
satisfies the hypotheses of
Lemma~\ref{lem:direct-equiangular-edge-surgery}, leaves all surviving angles
equal to \(\alpha\), and strictly decreases volume.
\end{lemma}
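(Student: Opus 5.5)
The plan is an induction on the length of the free chain of the leaf \(D\). By Theorem~\ref{thm:graph-type-structure} and the subsequent remark on absorbed \(4\)-prisms, every leaf retained in the reduced tree has degree \(n\ge5\). By \eqref{eq:leaf-free-chain}, exactly \(n-3\ge2\) of its side quadrilaterals are free and form a linear chain on the boundary of the current polyhedron \(P_0\). At each step I will delete a lateral seam \(e\) between two consecutive free side faces of this chain. By the discussion after Figure~\ref{fig:edge-deletion-face-degrees}, \(A\cup B\) becomes a single quadrilateral, while the two faces \(C,D\) at the endpoints of \(e\) are the two bases of the leaf, each losing one side. Combinatorially, the result is the same prism tree with the leaf replaced by an \((n-1)\)-prism. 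After \(n-4\) deletions the leaf is a \(4\)-prism and is absorbed as in Figure~\ref{fig:four-prism-absorption}. Volume strictly decreases at each step by Lemma~\ref{lem:direct-equiangular-edge-surgery}, and angles other than the deleted one are never changed.

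Hypotheses (S1)--(S3) are checked as follows.
\begin{itemize}
\item \textbf{(S1).} \(A\) and \(B\) are free side faces of the leaf, hence quadrilaterals in \(P_0\).
\item \textbf{(S2), combinatorial part.} \(P_1=P_0\setminus e\) is again a trivalent polyhedron obtained by gluing a tree of prisms. All degrees are still \(\ge4\): the leaf has degree \(n-1\ge4\), and the other blocks are unchanged. So Lemma~\ref{lem:prism-tree-no-three} shows \(P_1\) has no prismatic \(3\)-circuit.
\item \textbf{(S2), realization.} Apply Andreev's theorem (Theorem~\ref{thm:andreev}) with all angles \(\alpha\). The vertices are trivalent with sums \(3\alpha>\pi\). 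There are no prismatic \(3\)-circuits, and prismatic \(4\)-circuits are allowed since \(4\alpha<2\pi\). \(P_1\) is not a triangular prism, because every block has degree \(\ge4\). Condition (7) is vacuous because there are no ideal vertices. Hence \(P_1\) is realized as a compact equiangular polyhedron, and it has more than four vertices.
\item \textbf{(S3).} \(P_0\) itself is a prism tree with blocks of degree \(\ge4\) (initially by the remark after Theorem~\ref{thm:graph-type-structure}, later inductively), so it has no prismatic \(3\)-circuit. Lemma~\ref{lem:circuit-correspondence} then shows \(e\) lies on a prismatic \(4\)-circuit iff \(P_1\) acquires a new prismatic \(3\)-circuit, which was just excluded.
\end{itemize}
Hence Lemma~\ref{lem:direct-equiangular-edge-surgery} applies and gives \(\Vol(P_0)>\Vol(P_1)\).

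The main obstacle I expect is the bookkeeping that the seam deletion really is a seam of the leaf prism structure. I would check three points:
\begin{itemize}
\item The faces \(C,D\) at the endpoints of \(e\) are the two (possibly merged) bases.
\item Since the chain has at least two free faces, a seam between two free faces exists; this needs \(n-3\ge2\), i.e.\ \(n\ge5\).
\item The deletion does not touch the attaching quadrilateral or the two faces merged with the adjacent block's bases.
\end{itemize}
The last point requires the seam to be interior to the free chain, away from the crossed gluing, so that the glued structure of the rest of the tree is preserved. I would argue directly from the product structure \(Q_n\times I\): deleting a lateral edge of \(Q_n\times I\) between two side faces not adjacent to the attaching face yields \(Q_{n-1}\times I\) with the same attaching face and the same crossed gluing.

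One more point is needed when the leaf count drops in the last step. Near the end, the reduced tree may be recomputed. Still, the count \eqref{eq:leaf-free-chain} and Lemma~\ref{lem:prism-tree-no-three} only use the coarse prism description, so they remain valid, as noted in the text.
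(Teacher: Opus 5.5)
Your proposal is correct and is essentially the paper's own argument. You successively delete seams between consecutive free quadrilaterals of the leaf, obtain (S1) from the quadrilateral faces, (S2) from Lemma~\ref{lem:prism-tree-no-three} together with Theorem~\ref{thm:andreev}, and (S3) from Lemma~\ref{lem:circuit-correspondence}, exactly as the paper does with its faces $H_i$ and seams $e_i$; the only cosmetic difference is that you get the absence of prismatic $3$-circuits before deletion from the prism-tree lemma, whereas it also follows directly from Andreev's theorem, since the current polyhedron is compact equiangular with $3\alpha>\pi$.
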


\begin{proof}
The inequality \(n\ge5\) is
Theorem~\ref{thm:graph-type-structure}.  By \eqref{eq:leaf-free-chain}, the
free side faces form a linear chain
\[
        F_1,F_2,\ldots,F_{n-3}.
\]
These are the actual boundary side faces of \(D\): the \(n-3\) free
quadrilateral faces outside the three-position attaching collar.  In
particular, none of them is a base or an internal cutting face.
Starting with \(H_1=F_1\), define successively
\[
 e_i=H_i\cap F_{i+1},\qquad
 H_{i+1}=H_i\cup_{e_i}F_{i+1},\qquad
 1\le i\le n-4.
\]
After the first \(i-1\) deletions, \(H_i\) is a quadrilateral: its two chains
in the base directions have concatenated to two edges, and its two end edges
are unchanged.  Thus the two faces incident to the active seam \(e_i\), namely
\(H_i\) and \(F_{i+1}\), are quadrilaterals.

Delete \(e_i\) abstractly.  Combinatorially, this merges two consecutive
side faces of the leaf and concatenates the corresponding pairs of edges in
its two bases.  Hence it replaces the current \((n-i+1)\)-prism leaf by an
\((n-i)\)-prism leaf, without changing the attaching face or any other
block.  Since \(1\le i\le n-4\), the new leaf has at least four side faces.
Thus the post-deletion polyhedron is again a tree of prisms, all with at
least four side faces.  Lemma~\ref{lem:prism-tree-no-three} shows directly
that it has no prismatic \(3\)-circuit.

The post-deletion abstract polyhedron is still trivalent, and
\(3\alpha>\pi\).  Every prismatic \(4\)-circuit has angle sum
\(4\alpha<2\pi\).  The absence of prismatic \(3\)-circuits also excludes a
triangular prism.  Moreover the strict inequality
\(3\alpha>\pi\) makes every trivalent vertex finite, so the clause of
Andreev's theorem involving an ideal vertex does not arise.
Consequently Theorem~\ref{thm:andreev} realizes it as a compact equiangular
hyperbolic polyhedron.  Lemma~\ref{lem:circuit-correspondence} now shows that
the active seam \(e_i\) belonged to no prismatic \(4\)-circuit before its
deletion.  Hence all three hypotheses of
Lemma~\ref{lem:direct-equiangular-edge-surgery} hold.  Increasing only
\(\theta_{e_i}\) from \(\alpha\) to \(\pi\) realizes the deletion, strictly
decreases volume, and leaves every surviving edge at angle \(\alpha\).

After the \(n-4\) surgeries, all \(n-3\) free faces have merged to one
quadrilateral, and the leaf block has become a \(4\)-prism. It is absorbed by
the neighboring prism.

The remaining displayed blocks retain their prism product structures, and
every remaining gluing is still crossed.  They therefore give an explicit
graph-orbifold decomposition entirely into prism pieces.  In particular, the
resulting abstract polyhedron is again of graph type.  We do not claim that
this displayed decomposition is already reduced: edge
deletion can change the full collection of prismatic \(4\)-circuits.  At the
next step we simply take the reduced prismatic tree of the new graph-type
polyhedron and apply Theorem~\ref{thm:graph-type-structure} afresh.

\end{proof}

\subsection{Reduction to an ordinary prism}

\begin{proposition}[Graph-type reduction]\label{prop:graph-surgery-reduction}
Let \(P_\alpha\) have graph type.  If it is not already an ordinary prism, then
for some \(n\ge4\),
\[
        \Vol(P_\alpha)>\Vol(\Pi_n(\alpha)).
\]
\end{proposition}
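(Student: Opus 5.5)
The plan is to iterate Lemma~\ref{lem:leaf-collapse} until the reduced prismatic tree has a single vertex, and then read off the combinatorial type from Theorem~\ref{thm:graph-type-structure}. Since \(P_\alpha\) is of graph type and not an ordinary prism, its reduced prismatic tree \(G(P_\alpha)\) has more than one vertex. By Theorem~\ref{thm:graph-type-structure} it therefore has at least two leaves, each of prism degree \(n\ge5\). We choose one leaf \(D\) and apply Lemma~\ref{lem:leaf-collapse}. This performs \(n-4\ge1\) direct equiangular edge surgeries. Each surgery keeps every surviving angle equal to \(\alpha\) and strictly decreases volume, by Lemma~\ref{lem:direct-equiangular-edge-surgery}, and at the end \(D\) is a \(4\)-prism that is absorbed by its neighbour. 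The outcome is a compact equiangular polyhedron \(P'_\alpha\) with the same angle \(\alpha\). Because at least one surgery was performed, \(\Vol(P_\alpha)>\Vol(P'_\alpha)\).

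Next we check that the induction can continue. As noted at the end of the proof of Lemma~\ref{lem:leaf-collapse}, \(P'_\alpha\) is again of graph type, since the remaining displayed blocks are prisms with crossed gluings. It is also realized as a compact equiangular polyhedron, by Lemma~\ref{lem:prism-tree-no-three} and Andreev's theorem, as in that proof. We then recompute its reduced prismatic tree and repeat. For termination we need a strictly decreasing complexity, and the natural choice is the number of faces. Every surgery deletes an edge and merges two faces, so the face count drops by \(n-4\ge1\) at each leaf removal. Absorbing the \(4\)-prism does not change the abstract polyhedron. Consequently the process stops after finitely many steps, at a graph-type polyhedron whose reduced tree has exactly one vertex.

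By Theorem~\ref{thm:graph-type-structure}, that terminal polyhedron has the combinatorial type of an ordinary \(n\)-prism. The remark after that theorem excludes \(n=3\), since the triangular prism contradicts \(3\alpha>\pi\); hence \(n\ge4\). All its angles are \(\alpha\), so Andreev uniqueness identifies it with \(\Pi_n(\alpha)\). Chaining the strict inequalities gives \(\Vol(P_\alpha)>\Vol(\Pi_n(\alpha))\).

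The main obstacle is making sure the recomputed reduced tree at each stage really satisfies the hypotheses of Theorem~\ref{thm:graph-type-structure}. In particular, every leaf of the new reduced decomposition must have degree \(\ge5\), so that Lemma~\ref{lem:leaf-collapse} applies again. The concern is that the displayed decomposition after surgery need not be reduced, because edge deletion can change the prismatic \(4\)-circuits. I would handle this by applying the canonical Bonahon--Siebenmann reduction afresh to \(P'_\alpha\). The statement of Theorem~\ref{thm:graph-type-structure} for that new reduced decomposition gives \(n\ge5\) at leaves. The only additional point to verify is that no atoroidal component appears, which holds because \(P'_\alpha\) has an explicit decomposition into prism pieces alone.
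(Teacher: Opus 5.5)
Your proposal is correct and follows essentially the same route as the paper. You iterate Lemma~\ref{lem:leaf-collapse} on a leaf of the recomputed reduced prismatic tree, use the strictly decreasing face count for termination, and conclude from the one-vertex case of Theorem~\ref{thm:graph-type-structure} together with the strict volume decrease of the at least one surgery that occurs; your added remarks (excluding $n=3$, Andreev uniqueness for the terminal prism, and reapplying the theorem to the new reduced tree) only spell out what the paper's short proof leaves implicit.
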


\begin{proof}
At the current stage take the reduced prismatic tree.  If it has more than
one vertex, choose a leaf and apply Lemma~\ref{lem:leaf-collapse}; after its
free chain is shortened, absorb the resulting \(4\)-prism and take the
prismatic tree of the new graph-type polyhedron. Each surgery deletes a boundary seam,
so the number of faces strictly decreases.  The process is therefore finite,
and it stops when the current polyhedron is an ordinary equiangular
\(n\)-prism, \(n\ge4\).  Since the original polyhedron was not already an
ordinary prism, at least one direct surgery occurs.  Every such surgery
strictly decreases volume, which gives the stated strict inequality.
\end{proof}

\subsection{Ordinary equiangular prisms}

Let \(\Pi_n(\alpha)\) be an ordinary equiangular \(n\)-prism. A triangular
prism is impossible in our range, since its three lateral edges form a
prismatic \(3\)-circuit and Andreev's condition would require
\(3\alpha<\pi\). Hence \(n\ge4\).

Uniqueness in Andreev's theorem realizes the combinatorial dihedral
symmetries and the interchange of the bases by isometries. Centering their
common fixed point in the Klein ball gives the symmetric model below.

The volume decreases when the common dihedral angle increases. It is therefore
enough to estimate the limiting prism at
\(\alpha_0=\arccos(1/3)\). Put \(c=\cos\alpha_0=1/3\). In the Klein ball,
place the bases in the planes \(z=\pm h\), and let the side planes be vertical
and tangent to a Euclidean circle of radius \(\rho\). If
\(q=\cos(2\pi/n)\), the Klein angle formulas give
\begin{equation}\label{eq:ordinary17-rho}
        \rho^2=\frac{c+q}{1+c},
\end{equation}
and
\begin{equation}\label{eq:ordinary17-h}
        h=\frac{c\sqrt{1-\rho^2}}
        {\sqrt{\rho^2+c^2(1-\rho^2)}}.
\end{equation}
The base polygon contains the Euclidean disk of radius \(\rho\). Therefore
\begin{align}
        \Vol(\Pi_n(\alpha_0))
        &\ge
        \int_{-h}^{h}\int_0^{2\pi}\int_0^\rho
        \frac{r\,dr\,d\theta\,dz}{(1-r^2)^2}\nonumber\\
        &=2h\,\frac{\pi\rho^2}{1-\rho^2}.
        \label{eq:ordinary17-cylinder}
\end{align}
Here the hyperbolic volume density in the Klein ball is
\((1-r^2-z^2)^{-2}\).  We deliberately replaced it by the smaller quantity
\((1-r^2)^{-2}\), which gives the displayed lower bound after integration.
Writing \(x=\rho^2\), the right-hand side is
\[
        F(x)=2\pi c\,
        \frac{x}{\sqrt{1-x}\sqrt{x+c^2(1-x)}}.
\]
For \(n=4\), equations \eqref{eq:ordinary17-rho} and
\eqref{eq:ordinary17-h} give \(x=1/4\) and \(h=1/2\), so
\[
        \Vol(\Pi_4(\alpha_0))\ge\frac\pi3>\vtet.
\]
Moreover,
\begin{align*}
        \frac{F'(x)}{F(x)}
        &=\frac1x+\frac{1}{2(1-x)}
        -\frac{1-c^2}{2\bigl(c^2+(1-c^2)x\bigr)}
        \\
        &=\frac{c^2(2-x)+(1-c^2)x}
        {2x(1-x)\bigl(c^2+(1-c^2)x\bigr)}>0
        \qquad (x\ge1/4,\ c=1/3).
\end{align*}
Since \(x\) increases with \(n\), the same lower bound holds for every
\(n\ge4\).

\begin{lemma}[Ordinary prisms]\label{lem:ordinary-prisms-v17}
For every ordinary equiangular prism \(\Pi_n(\alpha)\), \(n\ge4\), with
\(\pi/3<\alpha<\alpha_0=\arccos(1/3)\), one has
\[
        \Vol(\Pi_n(\alpha))>\vtet>\Vol(T_\alpha).
\]
\end{lemma}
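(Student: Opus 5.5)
The preceding discussion already contains most of the ingredients, so the plan is to assemble them into a short chain of inequalities:
\[
        \Vol(\Pi_n(\alpha))>\Vol(\Pi_n(\alpha_0))\ge\frac{\pi}{3}>\vtet>\Vol(T_\alpha).
\]
There are three steps: a monotonicity step in \(\alpha\), the explicit Klein-model bound at the limiting angle \(\alpha_0\), and the comparison with the tetrahedron.

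\emph{Step 1: monotonicity and the endpoint.} For fixed \(n\ge4\), Andreev's theorem (Theorem~\ref{thm:andreev}) realizes \(\Pi_n(t)\) for every \(t\in(\pi/3,\pi/2)\). The prism is trivalent, has no prismatic \(3\)-circuits because \(n\ge4\), and its prismatic \(4\)-circuits have angle sum \(4t<2\pi\). The realization is compact, and by uniqueness it depends smoothly on \(t\). The Schl\"afli formula (Theorem~\ref{thm:schlafli}) then gives
\[
        \frac{d}{dt}\Vol(\Pi_n(t))=-\tfrac12\sum_e l_e<0.
\]
Hence for \(\alpha<\alpha_0\) we get \(\Vol(\Pi_n(\alpha))>\Vol(\Pi_n(\alpha_0))\). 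Note that \(\alpha_0<\pi/2\), so \(\Pi_n(\alpha_0)\) is a genuine compact prism; no degenerate endpoint is involved, unlike the tetrahedron.

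\emph{Step 2: the bound at \(\alpha_0\).} Here I invoke the computation just carried out. The symmetric Klein model with parameters \eqref{eq:ordinary17-rho} and \eqref{eq:ordinary17-h} contains the cylinder \(\{r\le\rho,\ |z|\le h\}\). Replacing the density by \((1-r^2)^{-2}\) gives the lower bound \(F(\rho^2)\). One should check briefly that \(h\) and \(\rho\) are the correct ones, i.e.\ that the base and side planes meet the ball and make angle \(\alpha_0\); this is a standard angle computation between Klein planes via outward Lorentz normals. At \(n=4\) this yields \(\pi/3\). Since \(q=\cos(2\pi/n)\) increases with \(n\), so does \(x=\rho^2\), and \(F'>0\) for \(x\ge1/4\). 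Hence \(\Vol(\Pi_n(\alpha_0))\ge\pi/3\) for all \(n\ge4\).

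\emph{Step 3: comparison.} Next, \(\pi/3\approx1.0472>1.014941\ldots=\vtet\). For a certified inequality one can bound \(\vtet=3\Lambda(\pi/3)\) from above using the Fourier series with the tail estimate \eqref{eq:lobachevsky-tail}, or use the known value. Finally, \(\Vol(T_\alpha)<\vtet\) for \(\alpha>\pi/3\), because \(\frac{d}{d\alpha}\Vol(T_\alpha)=-3l(\alpha)<0\) and \(\Vol(T_\alpha)\to\vtet\) as \(\alpha\to\pi/3\).

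The only delicate point I expect is this last numerical margin. The gap \(\pi/3-\vtet\approx0.032\) is small, so a crude truncation such as \(N=100\) in \eqref{eq:lobachevsky-tail} is not sharp enough. I would use an explicit numerical evaluation of \(\Lambda(\pi/3)\), for instance via \(\Lambda(\pi/3)=\tfrac23\Lambda(\pi/6)\), or a faster-converging series with a rigorous tail bound.
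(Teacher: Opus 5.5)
Your proposal is correct and follows the paper's proof: the Klein-model cylinder bound gives \(\Vol(\Pi_n(\alpha_0))\ge\pi/3>\vtet\), Schl\"afli's formula transfers this strictly to \(\alpha<\alpha_0\), and \(\Vol(T_\alpha)<\vtet\) for \(\alpha>\pi/3\). One correction to your closing remark: with \(N=100\), the tail estimate \eqref{eq:lobachevsky-tail} bounds the error in \(3\Lambda(\pi/3)\) by \(3/200=0.015\), which is well below the gap \(\pi/3-\vtet\approx0.032\), so that truncation already suffices for a certified comparison.
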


\begin{proof}
The preceding calculation gives
\(\Vol(\Pi_n(\alpha_0))\ge\pi/3>\vtet\). By Schl\"afli's formula,
lowering the common angle from \(\alpha_0\) to \(\alpha\) increases volume.
Finally, \(\Vol(T_\alpha)<\vtet\) for \(\alpha>\pi/3\).
\end{proof}

\begin{lemma}[Prismatic case]\label{lem:prismatic-estimate-v13}
Let \(P_\alpha\) be an ordinary equiangular prism or an equiangular graph-type
polyhedron, with \(\pi/3<\alpha<\arccos(1/3)\). Then
\[
        \Vol(P_\alpha)>\Vol(T_\alpha).
\]
\end{lemma}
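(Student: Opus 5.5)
The plan is to split into the two cases named in the statement and to reduce the second to the first.

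\emph{Ordinary prism.} If \(P_\alpha\) is an ordinary equiangular prism, then \(P_\alpha=\Pi_n(\alpha)\) for some \(n\). A triangular prism is impossible in this range: its three lateral edges form a prismatic \(3\)-circuit, and Andreev's theorem would then require \(3\alpha<\pi\). So \(n\ge4\), and Lemma~\ref{lem:ordinary-prisms-v17} gives directly
\[
\Vol(P_\alpha)=\Vol(\Pi_n(\alpha))>\vtet>\Vol(T_\alpha).
\]

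\emph{Graph type, not a prism.} Here Proposition~\ref{prop:graph-surgery-reduction} supplies some \(n\ge4\) with \(\Vol(P_\alpha)>\Vol(\Pi_n(\alpha))\). This proposition rests on Lemma~\ref{lem:leaf-collapse} and the direct equiangular edge surgery, Lemma~\ref{lem:direct-equiangular-edge-surgery}, whose hypotheses were checked leaf by leaf. At every stage the current polyhedron stays compact and equiangular with the same \(\alpha\), because each surgery keeps every surviving angle equal to \(\alpha\). The terminal prism is therefore a genuine \(\Pi_n(\alpha)\) with \(n\ge4\). Chaining with Lemma~\ref{lem:ordinary-prisms-v17} gives
\[
\Vol(P_\alpha)>\Vol(\Pi_n(\alpha))>\vtet>\Vol(T_\alpha).
\]

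The only delicate point is bookkeeping, not analysis. I would confirm two things. First, the hypothesis \(\pi/3<\alpha<\arccos(1/3)\) lies inside the range \(\pi/3<\alpha<\pi/2\) where the surgery lemma is proved; this holds because \(\arccos(1/3)<\pi/2\). Second, \(\Vol(T_\alpha)<\vtet\) for \(\alpha>\pi/3\). This follows from the Schl\"afli identity \(\frac{d}{d\alpha}\Vol(T_\alpha)=-3l(\alpha)<0\) together with continuity of the volume integral \eqref{eq:tet-volume} at \(\alpha=\pi/3\).
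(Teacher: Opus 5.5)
Your proposal is correct and matches the paper's proof: the ordinary-prism case follows from Lemma~\ref{lem:ordinary-prisms-v17}, and the non-prism graph-type case combines Proposition~\ref{prop:graph-surgery-reduction} with that same lemma. Your extra bookkeeping is consistent with facts the paper establishes earlier: excluding triangular prisms, checking that $\arccos(1/3)<\pi/2$ so the surgery lemma applies, and proving $\Vol(T_\alpha)<\vtet$.
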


\begin{proof}
If \(P_\alpha\) is an ordinary equiangular prism, the result follows from
Lemma~\ref{lem:ordinary-prisms-v17}.

If \(P_\alpha\) has graph type and is not an ordinary prism, combine
Proposition~\ref{prop:graph-surgery-reduction} with
Lemma~\ref{lem:ordinary-prisms-v17}.
\end{proof}

\section{Proof of the main theorem}

\begin{proof}[Proof of Theorem~\ref{thm:main}]
For \(\alpha=\pi/3\), let \(N\) be the number of vertices.  If \(N=4\), the
polyhedron is the ideal regular tetrahedron.  If \(N>4\), Atkinson's estimate
for \(\pi/3\)-equiangular hyperbolic polyhedra gives
\(\Vol(P)\ge N\vtet/3>\vtet\)
\cite[Theorem~2.6]{AtkinsonEquiangular}.  Hence assume that
\[
        \frac{\pi}{3}<\alpha<\arccos\frac13.
\]
Let \(P_\alpha\) be an arbitrary equiangular hyperbolic polyhedron with this angle. If \(P_\alpha\) is a tetrahedron, then the Gram-matrix criterion for hyperbolic simplices, together with the equality of all six dihedral angles, shows that it is isometric to the regular tetrahedron \(T_\alpha\).

Suppose now that \(P_\alpha\) is not a tetrahedron. Consider Atkinson's decomposition. If it has an atoroidal component whose right-angled limit is not isometric to \(P(3,2)\), then by Lemma~\ref{lem:atoroidal-general}
\[
        \Vol(P_\alpha)>\Vol(T_\alpha).
\]

If the right-angled limit of an atoroidal component is \(P(3,2)\), then, by Lemma~\ref{lem:Bhat-volume}, this component already has volume greater than \(\Vol(T_\alpha)\). Hence the whole polyhedron also has volume greater than \(\Vol(T_\alpha)\).

It remains to consider the case where there are no atoroidal components. Then
\(P_\alpha\) has graph type; this includes an ordinary prism as the one-block
case. By Lemma~\ref{lem:prismatic-estimate-v13},
\[
        \Vol(P_\alpha)>\Vol(T_\alpha).
\]

Thus every equiangular polyhedron different from the tetrahedron has volume greater than \(\Vol(T_\alpha)\). The theorem follows.
\end{proof}

\subsection*{Funding}

This work was supported by the state task of the Sobolev Institute of
Mathematics, project No.~FWNF-2026-0011. 

\subsection*{Use of AI tools}
The author used OpenAI's
ChatGPT as an editorial and verification aid when checking the
exposition, proof structure, and bibliographic references. The author takes
full responsibility for the final text, arguments, and references.

\subsection*{Competing interests}
The author has no relevant financial or non-financial interests to disclose.


\begin{thebibliography}{99}

\bibitem{AndreevCompact}
E. M. Andreev,
Convex polyhedra in Loba\v{c}evski\u{\i} spaces,
Math. USSR Sb. \textbf{10} (1970), 413--440.
\url{https://doi.org/10.1070/SM1970v010n03ABEH001677}.

\bibitem{AndreevFinite}
E. M. Andreev,
On convex polyhedra of finite volume in Loba\v{c}evski\u{\i} space,
Math. USSR Sb. \textbf{12} (1970), 255--259.
\url{https://doi.org/10.1070/SM1970v012n02ABEH000920}.

\bibitem{AtkinsonEquiangular}
C. K. Atkinson,
Volume estimates for equiangular hyperbolic Coxeter polyhedra,
Algebr. Geom. Topol. \textbf{9} (2009), no.~2, 1225--1254.
\url{https://doi.org/10.2140/agt.2009.9.1225}.

\bibitem{Atkinson2011}
C. K. Atkinson,
Two-sided combinatorial volume bounds for non-obtuse hyperbolic polyhedra,
Geom. Dedicata \textbf{153} (2011), 177--211.
\url{https://doi.org/10.1007/s10711-010-9563-y}.

\bibitem{AtkinsonRafalski}
C. K. Atkinson, S. Rafalski,
The smallest Haken hyperbolic polyhedra,
Proc. Amer. Math. Soc. \textbf{141} (2013), no. 4, 1393--1404.
\url{https://doi.org/10.1090/S0002-9939-2012-11665-X}.

\bibitem{CharneyDavis1993}
R. Charney, M. W. Davis,
Singular metrics of nonpositive curvature on branched covers of Riemannian
manifolds,
Amer. J. Math. \textbf{115} (1993), no. 5, 929--1009.
\url{https://doi.org/10.2307/2375063}.

\bibitem{EgorovFullerene}
A. Egorov, A. Vesnin,
On correlation of hyperbolic volumes of fullerenes with their properties,
Comput. Math. Biophys. \textbf{8} (2020), 150--167.
\url{https://doi.org/10.1515/cmb-2020-0108}.

\bibitem{RivinHodgson1993}
C. D. Hodgson, I. Rivin,
A characterization of compact convex polyhedra in hyperbolic \(3\)-space,
Invent. Math. \textbf{111} (1993), 77--111;
corrigendum, Invent. Math. \textbf{117} (1994), 359.
\url{https://doi.org/10.1007/BF01231281}.

\bibitem{HodgsonRivinSmith}
C. D. Hodgson, I. Rivin, W. D. Smith,
A characterization of convex hyperbolic polyhedra and of convex polyhedra inscribed in the sphere,
Bull. Amer. Math. Soc. (N.S.) \textbf{27} (1992), no.~2, 246--251.
\url{https://doi.org/10.1090/S0273-0979-1992-00303-8}.

\bibitem{Inoue2008}
T. Inoue,
Organizing volumes of right-angled hyperbolic polyhedra,
Algebr. Geom. Topol. \textbf{8} (2008), no.~3, 1523--1565.
\url{https://doi.org/10.2140/agt.2008.8.1523}.

\bibitem{Kellerhals1989}
R. Kellerhals,
On the volume of hyperbolic polyhedra,
Math. Ann. \textbf{285} (1989), 541--569.
\url{https://doi.org/10.1007/BF01452047}.

\bibitem{Nonaka2024}
J. Nonaka, H. Yoshida,
Volumes and arithmeticity of \(\pi/3\)-equiangular hyperbolic polyhedra,
Kodai Math. J. \textbf{49} (2026), no.~2, 141--160.
\url{https://doi.org/10.2996/kmj49202}.

\bibitem{RHD}
R. K. W. Roeder, J. H. Hubbard, W. D. Dunbar,
Andreev's theorem on hyperbolic polyhedra,
Ann. Inst. Fourier (Grenoble) \textbf{57} (2007), no. 3, 825--882.
\url{https://doi.org/10.5802/aif.2279}.

\bibitem{SchlenkerHyperideal}
J.-M. Schlenker,
Hyperideal polyhedra in hyperbolic manifolds,
preprint, arXiv:math/0212355v2 (2003).
\url{https://arxiv.org/abs/math/0212355v2}.

\bibitem{Ushijima2006}
A. Ushijima,
A volume formula for generalised hyperbolic tetrahedra,
in: \emph{Non-Euclidean Geometries}, Mathematics and Its Applications, vol. 581,
Springer, Boston, MA, 2006, 249--265.
\url{https://doi.org/10.1007/0-387-29555-0_13}.

\bibitem{VesninEgorovUpper}
A. Yu. Vesnin, A. A. Egorov,
Upper bounds for volumes of generalized hyperbolic polyhedra and hyperbolic links,
Siberian Math. J. \textbf{65} (2024), no. 3, 534--551.
\url{https://doi.org/10.1134/S0037446624030042}.

\bibitem{EgorovP32}
A. Yu. Vesnin, A. A. Egorov,
The right-angled Coxeter group of minimal covolume in three-dimensional hyperbolic space,
Siberian Math. J. \textbf{66} (2025), no.~6, 1374--1389.
\url{https://doi.org/10.1134/S0037446625060047}.

\bibitem{VinbergGeometry}
E. B. Vinberg (ed.),
\emph{Geometry II}, Encyclopaedia of Mathematical Sciences, vol. 29,
Springer-Verlag, Berlin, 1993.
\url{https://doi.org/10.1007/978-3-662-02901-5}.

\end{thebibliography}
\end{document}